\newtheorem{theo}{Theorem}[section]
\newtheorem{lemma}[theo]{Lemma}
\newtheorem{prop}[theo]{Proposition}
\newtheorem{cor}[theo]{Corollary}
\newtheorem{defi}[theo]{Definition}
\theoremstyle{definition}
\newtheorem{ex}[theo]{Example}
\newtheorem{rem}[theo]{Remark}
 \renewcommand{\labelenumi}{{\rm (\roman{enumi})}}
\newcommand{\f}{\phi}
\newcommand{\spec}{\operatorname{Spec}}
\newcommand{\Gl}{\operatorname{GL}}
\newcommand{\Aut}{\operatorname{Aut}}
\newcommand{\Hom}{\operatorname{Hom}}
\newcommand{\im}{\operatorname{Im}}
\newcommand{\id}{\operatorname{id}}
\newcommand{\de}{\delta}
\newcommand{\I}{\mathbb{I}}
\newcommand{\Gm}{\mathbb{G}_m}
\newcommand{\Rep}{\operatorname{Rep}}
\newcommand{\M}{\mathcal{M}}
\renewcommand{\vec}{\operatorname{Vec}}
\newcommand{\autt}{\underline{\operatorname{Aut}}^\otimes}
\newcommand{\PRO}{\operatorname{PROALGEBRAIC\ \ GROUPS}}
\newcommand{\TANN}{\operatorname{TANN}}
\newcommand{\PROALG}{\operatorname{PROALG}}
\newcommand{\tp}{\operatorname{tp}}
\newcommand{\Th}{\operatorname{Th}}
\title{Model Theory of  Proalgebraic Groups
\author{Anand Pillay\thanks{The first author was supported by the NSF grants DMS-1360702, DMS-1665035 and DMS-1760212.} \   and Michael Wibmer \thanks{The second author was supported by the NSF grants DMS-1760212, DMS-1760413, DMS-1760448 and the Lise Meitner grant M-2582-N32 of the Austrian Science Fund FWF.}}}
\date{\today}
\begin{document}
\maketitle

\begin{abstract}	
	\let\thefootnote\relax\footnotetext{{\em Mathematics Subject Classification Codes:} 03C60, 
		03C65, 
		14L15, 
		14L17, 
		20G05, 
		18D10 
		{\em Key words and phrases}:
		Affine group schemes, proalgebraic groups, Tannakian categories, representation theory.}
We lay the foundations for a model theoretic study of proalgebraic groups. Our axiomatization is based on the tannakian philosophy. Through a tensor analog of skeletal categories we are able to consider neutral tannakian categories with a fibre functor as many-sorted first order structures. The class of diagonalizable proalgebraic groups is analyzed in detail. We show that the theory of a diagonalizable proalgebraic group $G$ is determined by the theory of the base field and the theory of the character group of $G$. Some initial steps towards a comprehensive study of types are also made.   
\end{abstract}

\section*{Introduction}

Our initial inspiration for this paper goes back to the model theoretic treatment of profinite groups developed by G. Cherlin, L. van den Dries, A. Macintyre and Z. Chatzidakis in the eighties.  (See \cite{CherlinVanDenDriesMacintyre:DecidabilityAndUndecidabilityTheoremsforPACfields}, \cite{CherlinVanDenDriesMacinyre},\cite{Chatzidakis:ModelTheoryOfProfiniteGroupsPhDThesis}, \cite{Chatzidakis:ModelTheoryOfProfiniteGroupsHavingTheIwasawProperty},\cite{Chatzidakis:ModelTheoryOfProfiniteGroupsHavingIPIII}.) To a profinite group $G$ they associate an $\omega$-sorted structure consisting of the cosets $gN$ of the open normal subgroups $N$ of $G$; the coset $gN$ is of sort $n$ if $[G:N]\leq n$. In an appropriate language these structures can be axiomatized by a theory $T$ and there is an anti-equivalence of categories between the category of profinite groups with epimorphisms as morphisms and the category of models of $T$ with embeddings as morphisms.
A certain extension $T_{IP}$ of $T$ is particularly well-behaved. The theory $T_{IP}$ axiomatizes profinite groups $G$ having the Iwasawa (or embedding) property: Any diagram
$$
\xymatrix{
G \ar@{..>}[d] \ar[rd]  & \\
B \ar[r] & A	
}
$$
where $B\to A$ is an epimorphism of finite groups and $G\to A$ is an epimorphism can be completed to a commutative diagram via an epimorphism $G\to B$, if $B$ is a quotient of $G$.
The theory of a profinite group having the Iwasawa property is $\omega$-categorical and $\omega$-stable. Moreover, the saturated models of $T_{IP}$ are exactly the free profinite groups.

Some parts of the theory of free profinite groups have recently been generalized to proalgebraic groups (\cite{Wibmer:FreeProalgebraicGroups}). This begs the questions, which aspects, if any, of the model theory of profinite groups have a proalgebraic counterpart? To begin with, it is a priori rather unclear how to treat proalgebraic groups as first-order structures. One may envision that the role played by the cardinalities $|G/N|$ of the finite quotients of a profinite group $G$ could be replaced by the degrees of defining equations of the algebraic quotients $G/N$ of a proalgebraic group $G$. However, a key fact used in the axiomatization of profinite groups is that if $N_1$ and $N_2$ are open normal subgroups of a profinite group $G$, then $|G/(N_1\cap N_2)|$ is bounded by $|G/N_1|\cdot |G/N_2|$. The degree does not exhibit such a behavior.

The main achievement of this paper is the introduction of a many-sorted language that allows us to axiomatize proalgebraic groups. The key idea is based on the tannakian philosophy. Instead of axiomatizing proalgebraic groups directly, we axiomatize their categories of representations, i.e., we axiomatize neutral tannakian categories together with a fibre functor. To implement this approach, certain technical challenges need to be overcome. For example, one cannot directly consider the class of all (finite dimensional, linear) representations of a proalgebraic group as a first order structure because this class is too big. Besides the fact that it is a proper class (i.e., not a set) the cardinality of the first-order structure associated to a proalgebraic group should be something algebraically meaningful, like the rank of the profinite group in the profinite setting. Therefore, one has to consider representations up to isomorphism. In other words, one has to consider skeletons of the category of representations of a proalgebraic group. To account for the fact that such a skeleton need not be closed under the tensor product, we introduce a tensor analog of skeletal categories; a notion that we deem of independent interest in the study of tensor categories.

We introduce a first-order theory $\PROALG$ in an appropriate many-sorted language such that the category of models of $\PROALG$ with the homomorphisms as morphisms is equivalent to the category of triples $(k,C,\omega)$, where $k$ is a field, $C$ a neutral tannakian category over $k$ that satisfies a tensor analog of being skeletal and $\omega$ is a fibre functor on $C$.  We also show that the functor $(k,C,\omega)\rightsquigarrow (k,\autt(\omega))$ to the category of proalgebraic groups (over varying base fields) is full, essentially surjective and induces a bijection on isomorphism classes. Thus, to every proalgebraic group $G$, there is associated a model $\M$ of $\PROALG$ that is unique up to an isomorphism. We can therefore unambiguously define the theory of $G$ as the theory of $\M$.  

Even for algebraic groups as innocuous as the multiplicative group $\Gm$ it is a non-trivial matter to determine their theory. We show that the theory of the multiplicative group over a field $k$ is determined by the theory of $k$ and the theory of its character group, i.e., by the theory of $(\mathbb{Z},+)$.
Indeed, we establish a similar result for any diagonalizable proalgebraic group. If $G$ is a proalgebraic group corresponding to a model $\M$ of $\PROALG$, then the character group of $G$ is interpretable in $\M$. If $G$ is diagonalizable with character group $A$ there is a converse: The structure $\M$ is interpretable in the structure $(k,A)$, with the language of fields on $k$ and the language of abelian groups on $A$. In fact, we show that the theory of all diagonalizable proalgebraic groups is weakly bi-interpretable with the theory of pairs $(k,A)$, where $k$ is a field and $A$ an abelian group.
 
We consider this article to be the first step in a model theoretic treatment of proalgebraic groups. Many, even rather basic questions remain open. However, we do give a flavor of the expressive power of our theory $\PROALG$ by unveiling some of the algebraic information encoded in certain types.

\medskip

There is some thematic overlap between our work and work of M. Kamensky (\cite{Kamensky:ModelTheoryAndTheTannakianFormalism}) in the sense that both articles connect model theory and tannakian categories. However, the approaches and the aims differ: Our theory $\PROALG$ axiomatizes neutral tannakian categories together with a fibre functor for the purpose of advancing the theory of proalgebraic groups using model theoretic techniques. M. Kamensky's theory $T_{\mathcal{C}}$ (in a language $\mathcal{L}_{\mathcal{C}}$ dependent on $\mathcal{C}$) axiomatizes fibre functors on a \emph{fixed} neutral tannakian category $\mathcal{C}$ for the purpose of reproving the main tannaka reconstruction theorem using model theoretic techniques. On the other hand, we feel that this article may be seen as a possible answer to the open question 0.1.2 in \cite{Kamensky:ModelTheoryAndTheTannakianFormalism}.

One of the main motivations for the model theoretic treatment of profinite groups is that it has applications in the model theory of fields, in particular the model theory of pseudo algebraically closed fields. This is based on the fact that for a field $K$, the first-order structure corresponding to the absolute Galois group of $K$ is interpretable in the field $K$. For a differential field $(K,\de)$ of characteristic zero with algebraically closed constants the absolute differential Galois group (\cite{SingerPut:differential},\cite{BachmayrHarbaterHartmannWibmer:FreeDifferentialGaloisGroups}) is a proalgebraic group. It appears that at least some reduct of the structure corresponding to the absolute differential Galois group of $(K,\de)$ is interpretable in the differential field $(K,\de)$. We therefore hope that our model theoretic treatment of proalgebraic groups will eventually lead to applications in the model theory of differential fields.

Typically model theorists treat algebraic and proalgebraic groups simply as definable respectively prodefinable groups in $\operatorname{ACF}$, the theory of algebraically closed fields. Our approach allows us to treat proalgebraic groups as structures in their own right. One advantage of our approach is that we can handle non-reduced algebraic groups, such as, e.g., the group of $p$-th roots of unity in characteristic $p$, without difficulties, whereas the point-set approach dictated by ACF is oblivious to these groups.

\medskip

The article is organized as follows: The first section is purely algebraic, i.e., does not involve any model theory. After recalling the basic definitions and results from the tannakian theory we introduce tensor skeletal tensor categories and the closely related notion of pointed skeletal neutral tannakian categories. We then proceed to define the category $\TANN$. This category has as objects tripes $(k,C,\omega)$, where $k$ is a field, $C$ a pointed skeletal neutral tannakian category over $k$ and $\omega$ a neutral fibre functor on $C$. We show that the functor $(k,C,\omega)\rightsquigarrow (k,\autt(\omega))$ from the category $\TANN$ to the category of proalgebraic groups is full, essentially surjective and induces a bijection on the isomorphism classes.

In the second section we present the axioms for $\PROALG$ in an appropriate many-sorted language. We show that the category of models of $\PROALG$ is equivalent to the category $\TANN$ and we briefly discuss some elementary classes of proalgebraic groups.

In the third section we study the theory of diagonalizable proalgebraic groups. We show that it is weakly bi-interpretable with the theory of pairs $(k,A)$, where $k$ is a field and $A$ an abelian group. From this we deduce rather directly a description of the completions of the theory of diagonalizable proalgebraic groups and a characterization of elementary extensions. It also follows that the theory of a diagonalizable proalgebraic group over an algebraically closed field is stable.

In the final section we present some initial results concerning types. The main result is that if a representation of a proalgebraic group is considered as an element of a model of $\PROALG$, then its type over the base field determines the image of the representation.

\section{Tannakian categories}

In this section we first recall the main definitions and results from the theory of tannakian categories. Then we introduce a tensor version of skeletal categories and show that the isomorphism classes of pointed skeletal neutral tannakian categories with a fibre functor are in bijection with the isomorphism classes of proalgebraic groups.

\medskip 

\medskip

\noindent {\bf \large Notation and Conventions:}

\medskip

All rings are assumed to be commutative and unital. Throughout the article $k$ denotes an arbitrary field, usually our ``base field''. The category of finite dimensional $k$-vector spaces is denoted by $\vec_k$. A \emph{proalgebraic group}\footnote{It would admittedly be more accurate to use the term ``pro-affine algebraic group'' or ``affine group scheme'' instead of ``proalgebraic group''. We hope the reader does not object to our choice of brevity over rigor in this instance.} over $k$ is, by definition, an affine group scheme over $k$. An \emph{algebraic group} over $k$ is an affine group scheme of finite type over $k$.
We will often think of a proalgebraic group $G$ as the functor $R\rightsquigarrow G(R)$ from the category of $k$-algebras to the category of groups. Conversely, a functor from the category of $k$-algebras to the category of groups is a proalgebraic group if and only if it is representable. A \emph{closed subgroup} of a proalgebraic group is a closed subgroup scheme. Some helpful references for the theory of algebraic and proalgebraic groups are \cite{Waterhouse:IntroductiontoAffineGroupSchemes}, \cite{DemazureGabriel:GroupesAlgebriques} and \cite{Milne:AlgebraicGroupsTheTheoryOfGroupSchemesOfFiniteTypeOverAField}.

For a vector space $V$ over $k$ we denote by $\Gl_V$ the functor from the category of $k$-algebras to the category of groups that assigns to any $k$-algebra $R$, the group of $R$-linear automorphisms of $V\otimes_k R$. 
A \emph{representation} of a proalgebraic group $G$ is a pair $(V,\f)$, where $V$ is a $k$-vector space and $\f\colon G\to\Gl_V$ is a morphism of functors (also called a natural transformation), i.e., $G(R)$ acts, functorially in $R$, on $V\otimes_k R$ through $R$-linear automorphisms. A morphism $f\colon (V,\f)\to (V',\f')$ of representations of $G$ is a $k$-linear map $f\colon V\to V'$ such that
$$
\xymatrix{
	V\otimes_k R \ar^-{f\otimes R}[r] \ar_{\f(g)}[d] & V'\otimes_k R \ar^{\f'(g)}[d] \\
	V\otimes_k R \ar^{f\otimes R}[r] & V'\otimes_k R	
}
$$ 
commutes for all $k$-algebras $R$ and $g\in G(R)$. All representations are assumed to be finite dimensional unless the contrary is explicitly allowed. We denote the category of all finite dimensional representations of $G$ by $\Rep(G)$.

\subsection{Recollection}
We start by recalling the basic definitions and results from the theory of tannakian categories. See \cite{Deligne:categoriestannakien} and \cite{DeligneMilne:TannakianCategories} for more details.

\begin{defi}
	A \emph{tensor category} is a tuple $(C,\otimes,\Phi,\Psi)$, where $C$ is a category, $\otimes\colon C\times C\to C$ is a functor and 
	$\Phi$ and $\Psi$ are isomorphisms of functors, called the \emph{associativity} and \emph{commutativity constraints} respectively.
	More specifically, $\Phi$ has components $\Phi_{X,Y,Z}\colon X\otimes (Y\otimes Z)\to (X\otimes Y)\otimes Z$ and $\Psi$ has components $\Psi_{X,Y}\colon X\otimes Y\to Y\otimes X$ for objects $X,Y,Z$ of $C$ such that three commutative diagrams are satisfies. See \cite[Section 1]{DeligneMilne:TannakianCategories} for details. It is also required that there exists an \emph{identity object} $(\mathds{1},u)$. This means that $C\to C,\ X\rightsquigarrow\mathds{1}\otimes X$ is an equivalence of categories and $u\colon \mathds{1}\to \mathds{1}\otimes\mathds{1}$ is an isomorphism.
\end{defi}
We will often omit $\Phi$ and $\Psi$ from the notation and refer to $C$ or the pair $(C,\otimes)$ as a tensor category. If $(\mathds{1},u)$ and $(\mathds{1}',u')$ are identity objects of a tensor category $(C,\otimes)$, then there exists a unique isomorphism $a\colon \mathds{1}\to \mathds{1}'$ such that
$$
\xymatrix{
\mathds{1} \ar_-a[d] \ar^-u[r] & \mathds{1}\otimes\mathds{1} \ar^-{a\otimes a}[d] \\
\mathds{1}' \ar^-{u'}[r] & \mathds{1}'\otimes\mathds{1}'	
}
$$
commutes (\cite[Prop. 1.3, b)]{DeligneMilne:TannakianCategories}). Moreover, there exists a unique isomorphism of functors $l$ with components $l_X\colon X\to \mathds{1}\otimes X$ such that certain diagrams commute (\cite[Prop. 1.3, a)]{DeligneMilne:TannakianCategories}. Similarly, for $r_X\colon X\to X\otimes \mathds{1}$.

\begin{defi}
	A \emph{tensor functor} from a tensor category  $(C,\otimes)$ to a tensor category $(C',\otimes')$ is a pair $(T,c)$, where $T\colon C\to C'$ is a functor and $c$ is an isomorphism of functors with components $c_{X,Y}\colon T(X)\otimes'T(Y)\to T(X\otimes Y)$ for objects $X,Y$ of $C$ such that
	\begin{enumerate}
		\item $$
		\xymatrix{
		T(X)\otimes' (T(Y)\otimes' T(Z)) \ar^{\id\otimes' c}[r] \ar_{\Phi'}[d] & 	T(X)\otimes' (T(Y\otimes T(Z))\ar^-c[r] & T(X\otimes(Y\otimes Z)) \ar^{T(\Phi)}[d] \\
		(T(X)\otimes' T(Y))\otimes'T(Z) \ar[r]^-{c\otimes'\id} & T(X\otimes Y)\otimes'T(Z) \ar^c[r] & T((X\otimes Y)\otimes Z)
		}
		$$
		commutes for all objects $X,Y,Z$ of $C$,
		\item $$
		\xymatrix{
			T(X)\otimes' T(Y) \ar_{\Psi'}[d] \ar^c[r] & T(X\otimes Y) \ar^{T(\Psi)}[d] \\
			T(Y)\otimes' T(X) \ar^c[r] & T(Y\otimes X) 
		}
		$$
		commutes for all objects $X$ and $Y$ of $C$ and
		\item if $(\mathds{1},u)$ is an identity object of $(C,\otimes)$, then $(T(\mathds{1}),T(u))$ is an identity object of $(C',\otimes')$.
	\end{enumerate}
	A tensor functor $(T,c)$ is \emph{strict} if $c$ is the identity transformation. In particular, $T(X)\otimes' T(Y)=T(X\otimes Y)$ for all objects $X$ and $Y$ of $C$.
	A tensor functor $(T,c)$ is a \emph{tensor equivalence} if $T$ is an equivalence of categories.	
\end{defi}

We will usually omit the isomorphism of functors $c$ from the notation and refer to $T$ as a tensor functor. In Section \ref{section: Neutral tannakian categories as first order structures} we will define first order structures corresponding to neutral tannakian categories. Homomorphisms between such structures correspond to strict tensor functors. Therefore we are mostly interested in strict tensor functors. We note that for $c=\id$, the above two diagrams reduce to $T(\Phi_{X,Y,Z})=\Phi'_{T(X),T(Y),T(Z)}$ and $T(\Psi_{X,Y})=\Psi'_{T(X),T(Y)}$. 

\begin{defi} \label{defi: morphism of tensor functors}
	Let $(T_1,c_1)$ and $(T_2,c_2)$ be tensor functors from $(C,\otimes)$ to  $(C',\otimes')$. A morphism $\alpha\colon T_1\to T_2$ of functors is a \emph{morphism of tensor functors} if 
	\begin{enumerate}
		\item 	$$
		\xymatrix{
			T_1(X)\otimes'T_1(Y) \ar^-{c_1}[r] \ar_{\alpha_X\otimes'\alpha_Y}[d] & T_1(X\otimes Y) \ar^{\alpha_{X\otimes Y}}[d] \\
			T_2(X)\otimes'T_2(Y) \ar^-{c_2}[r] & T_2(X\otimes Y)	
		}
		$$
		commutes for all objects $X,Y$ of $C$ and
\item 
	 $$
	 \xymatrix{
	 		& \mathds{1}' \ar[ld] \ar[rd] & \\
	 T_1(\mathds{1}) \ar^-{\alpha _{\mathds{1}}}[rr] & &T_2(\mathds{1})
 	 }
	 $$ commutes, where the downwards morphisms are deduced from the uniqueness of the identity object in $(C',\otimes')$.
	 	\end{enumerate}
\end{defi}

Note that, assuming (i), condition (ii) is equivalent to $\alpha_{\mathds{1}}$ being an isomorphism. In particular, if $\alpha$ is an isomorphism of tensor functors, condition (ii) is vacuous. (Cf. \cite[Def. 2.4.8]{TensorCategories:EtingofGelakiNikshychOstrik}.)


\begin{defi}
	A tensor category $(C,\otimes)$ is rigid, if for every object $X$ of $C$ there exists an object $X^\vee$ (called a \emph{dual} of $X$) of $C$  together with morphisms $\operatorname{ev}\colon X\otimes X^\vee\to \mathds{1}$ and $\delta\colon\mathds{1}\to X^\vee\otimes X$ such that
	
		$$
	X  \xrightarrow{r} X\otimes \mathds{1}\xrightarrow{\id\otimes\delta} X\otimes (X^\vee\otimes X)\xrightarrow{\Phi} (X\otimes X^\vee)\otimes X\xrightarrow{\operatorname{ev}\otimes \id} \mathds{1}\otimes X\xrightarrow{l^{-1}} X $$
	and 	
	$$ X^\vee  \xrightarrow{l} \mathds{1}\otimes X^\vee\xrightarrow{\delta\otimes \id}(X^\vee\otimes X)\otimes X^\vee\xrightarrow{\Phi^{-1}} X^\vee\otimes (X\otimes X^\vee)\xrightarrow{\id\otimes\operatorname{ev}}X^\vee\otimes\mathds{1}\xrightarrow{r^{-1}} X^\vee
	$$
	are the identity morphism.
\end{defi}

In \cite[Section 2]{Deligne:categoriestannakien} it is shown that the above definition of rigid is equivalent to the one used in \cite{DeligneMilne:TannakianCategories}.  In a rigid tensor category the dual $X^\vee$ of an object $X$ is uniquely determined up to an isomorphism. A rigid tensor category $(C,\otimes)$ is \emph{abelian} if $C$ is an abelian category. In this case $\otimes$ is automatically bi-additive (\cite[Prop. 1.16]{DeligneMilne:TannakianCategories}).

Let $k$ be a field and $(C,\otimes)$ a rigid abelian tensor category. An isomorphism between $k$ and $\operatorname{End}(\mathds{1})$ induces the structure of a $k$-linear category on $C$ such that $\otimes$ is $k$-bilinear. (See the discussion after Def. 1.15 in \cite{DeligneMilne:TannakianCategories}.)

Let $R$ be a ring. A basic example of a tensor category is the category $\operatorname{Mod}_R$ of all $R$-modules with the usual tensor product $(M_1,M_2)\rightsquigarrow M_1\otimes_R M_2$ of $R$-modules.
The associativity constraint $\Phi$ is given by $$\Phi_{M_1,M_2,M_3}\colon M_1\otimes_R(M_2\otimes_R M_3)\to (M_1\otimes_R M_2)\otimes_R M_3,\ m_1\otimes(m_2\otimes m_3)\mapsto (m_1\otimes m_2)\otimes m_3$$ and the commutativity constraint $\Psi$ is given by $\Psi_{M_1,M_2}\colon M_1\otimes_R M_2\to M_2\otimes_R M_1,\ m_1\otimes m_2\mapsto m_2\otimes m_1$. Any free module $U$ of rank one with an isomorphism $u\colon U\to U\otimes_R U$ is an identity object. Note that any identity object $(U,u)$ can be written in the form $U=Ru_0$ and $u\colon U\to U\otimes_RU,\ u_0\mapsto u_0\otimes u_0$ for some basis element $u_0$ of $U$.

Let $k$ be a field. Recall that $\vec_k$ is the category of all finite dimensional vector spaces over $k$. As explained above, this is a tensor category. In fact, $(\vec_k,\otimes_k)$ is rigid and abelian. For a $k$-algebra $R$, the functor $\vec_k\to \operatorname{Mod}_R,\ V\rightsquigarrow V\otimes_k R$ together with the isomorphism of functors $c$ with components $c_{V,W}\colon (V\otimes_k R)\otimes_R (W\otimes_k R)\simeq (V\otimes_k W)\otimes_k R$ is a tensor functor.

\begin{defi}
	Let $k$ be a field. A \emph{neutral tannakian category over $k$} is a rigid abelian tensor category $(C,\otimes)$ together with an isomorphism $k\simeq \operatorname{End}(\mathds{1})$ such that there exists an exact $k$-linear tensor functor $\omega\colon(C,\otimes)\to (\vec_k,\otimes_k)$. Any such functor is called a \emph{(neutral) fibre functor}.
\end{defi}
We note that a fibre functor is faithful by \cite[Cor. 2.10]{Deligne:categoriestannakien}. Let $(C,\otimes)$ be a neutral tannakian category over the field $k$ and $\omega\colon C\to\vec_k$ a fibre functor. For a $k$-algebra $R$, we denote the composition of $\omega$ with the tensor functor $\vec_k\to \operatorname{Mod}_R,\ V\rightsquigarrow V\otimes_k R$  by $\omega_R$. In particular, $\omega_R(X)=\omega(X)\otimes_k R$ for ever object $X$ of $C$. We define $$\autt(\omega)(R)=\Aut^\otimes(\omega_R)$$
as the group of tensor automorphisms (i.e., invertible morphisms of tensor functors) of the tensor functor $\omega_R\colon (C,\otimes)\to (\operatorname{Mod}_R,\otimes_R)$. This assignment is functorial in $R$ and so $\autt(\omega)$ is a functor from the category of $k$-algebras to the category of groups.


\medskip

Let $G$ be a proalgebraic group over $k$. Recall (from the beginning of this section) that $\Rep(G)$ denotes the category of finite dimensional representations of $G$. With the associativity and commutativity constraint induced from $\vec_k$, $\Rep(G)$ is naturally a neutral tannakian category over $k$ with fibre functor $(V,\f)\rightsquigarrow V$. We are now prepared to state the main tannaka reconstruction theorem.

\begin{theo}[{\cite[Theorem 2.11]{DeligneMilne:TannakianCategories}}] \label{theo: tannaka main}
	Let $k$ be a field, $C$ a neutral tannakian category over $k$ and $\omega\colon C\to \vec_k$ a fibre functor. Then $G=\autt(\omega)$ is a proalgebraic group over $k$ and $\omega$ defines a tensor equivalence between $C$ and $\Rep(G)$.
\end{theo}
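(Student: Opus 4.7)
The plan is to bootstrap the theorem from the easier case of a neutral tannakian subcategory generated by a single object. I would first write $C$ as the filtered union of its full rigid abelian tensor subcategories $\langle X\rangle^\otimes$, where $\langle X\rangle^\otimes$ denotes the smallest full subcategory of $C$ containing $X$ and closed under $\otimes$, duals, finite direct sums, and subquotients. Restriction of $\omega$ yields fibre functors $\omega_X$ on each $\langle X\rangle^\otimes$, and $\autt(\omega)=\varprojlim_X \autt(\omega_X)$ as functors on $k$-algebras, so it suffices to show that each $G_X:=\autt(\omega_X)$ is an affine algebraic group over $k$; the inverse limit is then automatically a proalgebraic group.

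For representability of $G_X$, set $V=\omega(X)$. For any $k$-algebra $R$, a tensor automorphism of the restriction of $\omega_R$ to $\langle X\rangle^\otimes$ is completely determined by its component at $X$, an element $g\in\Gl_V(R)$, because every object of $\langle X\rangle^\otimes$ is constructed from $X$ using $\otimes$, duals, sums and subquotients, and because $\omega$ is faithful and exact. Conversely, an element $g\in\Gl_V(R)$ extends to such a tensor automorphism precisely when it is compatible with all morphisms $\mathds{1}\to X^{\otimes m}\otimes (X^\vee)^{\otimes n}$ in $C$, as these encode all tensor and categorical constraints in $\langle X\rangle^\otimes$. This is a family of $R$-linear conditions defining a closed subscheme of $\Gl_V$, so $G_X$ is algebraic.

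Next, the tautological action of $G=\varprojlim G_X$ on $\omega(Y)$ for $Y\in C$ furnishes an exact $k$-linear tensor functor $\omega^*\colon C\to \Rep(G)$ whose composition with the forgetful functor $\Rep(G)\to \vec_k$ recovers $\omega$. Faithfulness of $\omega^*$ follows at once from that of $\omega$. For fullness I would use the rigidity adjunction $\Hom_C(Y,Y')\cong \Hom_C(\mathds{1},Y^\vee\otimes Y')$ and its analog in $\Rep(G)$ to reduce the question to the identity $\Hom_C(\mathds{1},Z)\cong \omega(Z)^G$. The inclusion from left to right is built into the construction, and equality is checked inside some finitely generated $\langle Z\rangle^\otimes$, where $G_Z$ is an algebraic subgroup of $\Gl_{\omega(Z)}$ and the two sides agree by the very definition of $G_Z$.

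The main obstacle, and where the tannakian philosophy is genuinely needed, is essential surjectivity of $\omega^*$. Every finite-dimensional $G$-representation factors through some algebraic quotient $G\twoheadrightarrow G_X$, so it suffices to prove essential surjectivity of $\omega^*\colon \langle X\rangle^\otimes\to \Rep(G_X)$. By Chevalley's theorem applied to the closed embedding $G_X\subseteq \Gl_V$, every finite-dimensional $G_X$-representation is a subquotient of a finite direct sum of tensor powers $V^{\otimes m}\otimes (V^\vee)^{\otimes n}$; all of these lie in the essential image of $\omega^*$, and since $\omega^*$ is exact and fully faithful, so does every subquotient. This closes the equivalence. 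Throughout, rigidity is the crucial structural input: it underlies the algebraicity of $G_X$, the adjunction used for fullness, and the reduction to tensor generators needed for essential surjectivity.
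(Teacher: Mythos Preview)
The paper does not give its own proof of this statement; it is quoted from Deligne--Milne (their Theorem~2.11) and used as a black box throughout. There is therefore no proof in the paper to compare your sketch against.

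Regarding the sketch itself, the overall architecture is the standard one, but your fullness argument has a real gap. You reduce fullness to the identity $\Hom_C(\mathds{1},Z)\cong\omega(Z)^{G_Z}$ and then claim this holds ``by the very definition of $G_Z$''. The inclusion from left to right is indeed tautological, but the reverse inclusion is exactly what fullness asserts: that every $G_Z$-fixed vector in $\omega(Z)$ arises from a morphism $\mathds{1}\to Z$ in $C$. Your description of $G_Z$ only guarantees that $G_Z$ fixes those vectors that \emph{do} come from $C$; it says nothing about whether $G_Z$ might also fix other vectors. So the argument is circular at this step. In Deligne--Milne this is handled by first proving, via a coend/comodule construction, that $C$ is equivalent to the category of finite-dimensional comodules over an explicit coalgebra built from $\omega$, and only afterwards identifying that coalgebra with the coordinate ring of $\autt(\omega)$; fullness then falls out of the comodule description rather than from a direct analysis of invariants. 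Your representability argument for $G_X$ is also a bit quick (compatibility with subquotients, not only with maps into tensor powers, must be encoded), but that is repairable; the circularity in the fullness step is the substantive issue.
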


\begin{rem} \label{rem: recover G}
	If we choose $C=\Rep(G)$ for a some proalgebraic group $G$ over a field $k$ and $\omega\colon \Rep(G)\to \vec_k,\ (V,\f)\rightsquigarrow V$ in Theorem \ref{theo: tannaka main}, then $\autt(\omega)\simeq G$ by \cite[Prop. 2.8]{DeligneMilne:TannakianCategories}.
\end{rem}

\subsection{Tensor skeletons}

We introduce a tensor analog of skeletal categories and show that every tensor category is tensor equivalent to a tensor skeletal tensor category. Besides the cardinality issue already mentioned in the introduction, there is another reason why it is important to work with skeletons: Two categories are ought to be considered the ``same'' if they are equivalent. However, the eyes of model theory are conditioned to recognize the stronger notion of isomorphic categories. Since two categories are equivalent if and only if they have isomorphic skeletons, the two points of view can be reconciled by considering skeletons. (Cf. the remark at the very end of this section.)

\begin{defi} \label{defi: unique otimes factorization}
	Let $(C,\otimes)$ be a tensor category. An object $V$ of $C$ is \emph{tensor irreducible} if it is not in the image of $\otimes\colon C\times C\to C$, i.e., $V$ is not equal to $V_1\otimes W_1$ for objects $V_1$ and $W_2$ of $C$. The tensor category $(C,\otimes)$ has the \emph{unique tensor factorization property} if
	\begin{enumerate}
		\item $V_1\otimes W_1=V_2\otimes W_2$ implies $V_1=V_2$ and $W_1=W_2$ for objects, $V_1,V_2,W_1,W_2$ of $C$, i.e., $\otimes$ is injective on objects and
		\item every object of $C$ is a finite tensor product of tensor irreducible objects.
	\end{enumerate}
\end{defi}

The following example shows that a tensor category may not have any tensor irreducible objects. It also gives an example of a tensor category that does not satisfy condition (i) of the above definition.
\begin{ex} \label{ex: tensor on skeleton}
	Let $k$ be a field and let $C$ be the category whose objects are the $k$-vector spaces $k^n$ $(n\geq 0)$. The morphisms of $C$ are all $k$-linear maps between any two objects of $C$. For $m,n\geq 0$ choose an isomorphism $\eta_{m,n}\colon k^m\otimes_k k^n\to k^{mn}$.

	 On objects we define $\otimes \colon C\times C\to C$ by $k^m\otimes k^n=k^{mn}$ and on morphisms we define $\otimes$ through the isomorphisms $\eta_{m,n}$, i.e., such that
	$$
	\xymatrix{
			k^{m_1}\otimes_k k^{n_1} \ar^{f_1\otimes_k f_2}[r]  \ar_{\eta_{m_1n_1}}[d] &  k^{m_2}\otimes_k k^{n_2} \ar^{\eta_{m_2n_2}}[d] \\
	k^{m_1n_1} \ar^{f_1\otimes f_2}[r] & k^{m_2n_2}  
}
$$
commutes 
for any $f_1\colon k^{m_1}\to k^{n_1}$ and $f_2\colon k^{m_2}\to k^{n_2}$. Similarly, we can use the isomorphisms $\eta_{m,n}$ to define associativity and commutativity constraints. If fact, $C$ then becomes a neutral tannakian category over $k$. Note that no object of $C$ is tensor irreducible. For example, $k^n=k^n\otimes k$ for every $n\geq 0$.

We note that $C$ is a skeleton of the neutral tannakian category $\vec_k$ and that some choice (namely the choice of the $\eta_{m,n}$) was involved to define a tensor product on the skeleton $C$. Using tensor skeletal tensor categories we will be able to avoid this choice. See Example \ref{ex: tensor skeletal} below.
\end{ex}

\begin{lemma} \label{lemma: unique tensor factorization}
	Let $(C,\otimes)$ be a tensor category with the unique tensor factorization property. Then every object of $C$ is uniquely the tensor product of a finite completely parenthesized sequence of tensor irreducible objects of $C$.
\end{lemma}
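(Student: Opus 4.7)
The plan is to prove this by strong induction on the length of a parenthesized tensor factorization (where length means the number of tensor irreducible factors). Existence is already guaranteed by condition (ii) of Definition \ref{defi: unique otimes factorization}, so the content of the lemma is uniqueness of the completely parenthesized sequence, together with the consequence that its length is an invariant of the object.

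Formally, I would state the induction hypothesis as follows: for every $m < n$, any object of $C$ that admits a parenthesized tensor product representation of length $m$ by tensor irreducibles admits only one such representation. For the base case $n=1$, if $X$ is tensor irreducible and had a second representation of length $\geq 2$, then $X = Y \otimes Z$ for some $Y,Z$, contradicting tensor irreducibility; hence the representation $X = X$ is the only one. For the inductive step, suppose $X$ has a representation of length $n \geq 2$. Then the outermost parenthesization exhibits $X$ as a tensor product $X = A \otimes B$, where $A$ and $B$ inherit parenthesized representations of lengths $a, b \geq 1$ with $a + b = n$. By the induction hypothesis, both $A$ and $B$ have a unique parenthesized tensor product representation by tensor irreducibles. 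Given any other representation of $X$: since its length cannot be $1$ (as $X = A \otimes B$ rules out tensor irreducibility of $X$), it also has the form $X = A' \otimes B'$ at the top level. Condition (i) of Definition \ref{defi: unique otimes factorization} forces $A = A'$ and $B = B'$. The uniqueness for $A$ and $B$ then implies that the inner parenthesizations match, so the two representations of $X$ coincide.

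The main conceptual step to get right is the set-up of the induction: the ``length'' is only a priori the length of some chosen representation, not a well-defined invariant of $X$. The inductive scheme ``for every $m < n$, any object admitting a representation of length $m$ is uniquely represented'' side-steps this, because inside the proof we only ever compare a fixed length-$n$ representation against an arbitrary one, and the shorter pieces $A$ and $B$ automatically fall into the induction hypothesis through the length-$a$ and length-$b$ representations they inherit from the fixed one. As a byproduct, the length of the unique parenthesized sequence becomes a well-defined invariant of $X$.

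I do not anticipate any serious obstacle; the argument is essentially the standard unique-factorization argument (as for free magmas or for prime factorization in a UFD) transplanted into the tensor-category setting, with property (i) playing the role of cancellation and property (ii) playing the role of the existence of a factorization into atoms.
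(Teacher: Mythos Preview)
Your proof is correct and follows essentially the same approach as the paper: split at the outermost $\otimes$, invoke condition (i) of Definition \ref{defi: unique otimes factorization} to identify the two top-level factors, and recurse. Your strong induction on the length of a factorization is a slightly cleaner packaging than the paper's iterative peeling argument, but the underlying idea is identical.
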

\begin{proof}
	We only have to establish the uniqueness. Let $V$ be an object of $C$. If $V$ is tensor irreducible the claim is obvious. So we may assume that $V$ is not tensor irreducible. Assume we have two presentations of $V$ as tensor products of tensor irreducible objects. This yields two presentations $V=V_1\otimes W_1=V_2\otimes W_2$, where either $V_1$ or $W_1$ is tensor irreducible and either $V_2$ or $W_2$ are tensor irreducible. Without loss of generality, let us assume that $V_1$ is tensor irreducible. Then, by condition (i) of Definition \ref{defi: unique otimes factorization}, we find that $V_1=V_2$ and $W_1=W_2$. So $V_2=V_1$ is tensor irreducible and we have two presentations of $W_1=W_2$ as tensor products of tensor irreducible objects. Applying the same reasoning to $W_1=W_2$ and iterating this process, we reach, after finitely many steps, a situation where $W_1=W_2$ is tensor irreducible.
\end{proof}

\begin{defi}
Let $(C,\otimes)$ be a tensor category with the unique tensor factorization property. The \emph{tensor length} of an object $V$ of $C$ is the length of the unique completely parenthesized sequence of tensor irreducible objects whose tensor product equals $V$. 
\end{defi}	
In particular, the tensor irreducible objects of $C$ are those of tensor length one.

\begin{defi}
	A tensor category $(C,\otimes)$ is \emph{tensor skeletal} if
	\begin{enumerate}
		\item any two isomorphic tensor irreducible objects are equal,
		\item every object of $C$ is isomorphic to a tensor irreducible object and
		\item $C$ has the unique tensor factorization property.
	\end{enumerate}
\end{defi}
In a tensor skeletal tensor category every isomorphism class contains a unique tensor irreducible object and any object is uniquely the tensor product of tensor irreducible objects. In particular, the tensor irreducible objects are a skeleton of the category $C$.


\begin{prop} \label{prop: existence of tensor skeleton}
	Let $(C,\otimes)$ be a tensor category. Then there exists a tensor skeletal tensor category $(C',\otimes')$ and a strict tensor equivalence $(C',\otimes')\to (C,\otimes)$. 
\end{prop}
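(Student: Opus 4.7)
The plan is to build $C'$ as a ``formal tensor expression'' category over a chosen skeleton of $C$, and to transport all structure from $C$ through a fully faithful essentially surjective comparison functor, so that the tensor category axioms in $C'$ come for free.

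First, choose a skeleton $S$ of $C$, i.e., a set of objects with exactly one representative $s(X)$ from each isomorphism class, together with chosen isomorphisms $\alpha_X\colon X\to s(X)$ in $C$ (with $\alpha_{s(X)}=\id$). The objects of $C'$ are to be the completely parenthesized, nonempty finite formal tensor expressions built from elements of $S$; concretely, they are the free magma on the set $S$, with binary operation denoted $\otimes'$. Each such expression $E$ has an ``evaluation'' $|E|$ in $C$, defined recursively by $|A|=A$ for $A\in S$ and $|E_1\otimes'E_2|=|E_1|\otimes|E_2|$. The morphisms of $C'$ are defined by $\Hom_{C'}(E,F)=\Hom_C(|E|,|F|)$, with composition inherited from $C$. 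Declare $E\otimes'F$ on objects to be the formal tensor expression, and on morphisms use the bilinear functoriality of $\otimes$ in $C$ (which makes sense because $|E\otimes'F|=|E|\otimes|F|$).

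Next, transport the constraints. Set $\Phi'_{E,F,G}\in\Hom_{C'}(E\otimes'(F\otimes'G),(E\otimes'F)\otimes'G)$ to be the morphism $\Phi_{|E|,|F|,|G|}$ of $C$, and similarly for $\Psi'_{E,F}$. Since the evaluation on both sides matches that of the corresponding object in $C$, the pentagon and hexagon diagrams in $C'$ are literally the same diagrams in $C$ and hence commute. For an identity object in $C'$, let $\mathds{1}'=s(\mathds{1})\in S$; using $\alpha_{\mathds{1}}\colon\mathds{1}\to\mathds{1}'$, transport $u\colon\mathds{1}\to\mathds{1}\otimes\mathds{1}$ to an isomorphism $u'\in\Hom_{C'}(\mathds{1}',\mathds{1}'\otimes'\mathds{1}')=\Hom_C(\mathds{1}',\mathds{1}'\otimes\mathds{1}')$, and verify that $X\rightsquigarrow\mathds{1}'\otimes'X$ is an equivalence on $C'$ (it is, because it corresponds via the Hom-set identification to $Y\rightsquigarrow\mathds{1}'\otimes Y\cong\mathds{1}\otimes Y$ on $C$, which is an equivalence).

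Define $T\colon C'\to C$ by $T(E)=|E|$ on objects and by the identity on morphisms via the defining bijection $\Hom_{C'}(E,F)=\Hom_C(|E|,|F|)$. Then $T$ is fully faithful by construction, essentially surjective because every object of $C$ is isomorphic to some $A\in S\subset\operatorname{Ob}(C')$, and strict tensor because $|E\otimes'F|=|E|\otimes|F|$ and the constraints were defined by pullback. It only remains to check the tensor skeletal axioms for $C'$. The tensor irreducible objects are exactly the length-one expressions, i.e., the elements of $S$: indeed, every expression of length $\geq 2$ lies in the image of $\otimes'$, while an element of $S$ cannot equal a formal product $E_1\otimes'E_2$, because formal expressions are the free magma on $S$. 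The same freeness gives the unique tensor factorization property, and tensor length coincides with expression length. Any two isomorphic elements of $S$ are equal by choice of $S$, giving axiom (i). Finally, for any object $E$ of $C'$, $|E|$ is isomorphic in $C$ to the tensor irreducible object $s(|E|)\in S$, and this isomorphism of $C$ is, by definition, a morphism $E\to s(|E|)$ in $C'$, providing axiom (ii). The only step requiring any care is the handling of the identity object, since $\mathds{1}'$ must be chosen from $S$ rather than taken to be a formal expression; once this is set up correctly, all remaining verifications are routine transports from $C$.
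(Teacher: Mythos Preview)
Your proof is correct and follows essentially the same construction as the paper's: both build $C'$ from completely parenthesized finite sequences of objects of a chosen skeleton $S$ (what you call the free magma on $S$), define Hom-sets and constraints by evaluation in $C$, and take $T$ to be the evaluation functor. Your handling of the identity object via $s(\mathds{1})$ and the transport isomorphism $\alpha_{\mathds{1}}$ is slightly more careful than the paper's (which tacitly takes $\mathds{1}\in S$), but otherwise the arguments are the same.
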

\begin{proof}
	We first choose a skeleton $S$ for the category $C$, i.e., $S$ is a full subcategory of $C$ such that every object of $C$ is isomorphic to a unique object in $S$. In the next step we close $S$ under the tensor product in a generic fashion. In detail, we define the category $C'$ as follows. Let $P$ denote all 
	%
	%
	completely parenthesized finite sequences of objects of $S$. Note that for a finite sequence $V_1,\ldots, V_n$ of objects in $C$ there are $\frac{1}{n} {{2(n-1)}\choose{n-1}}$ possible ways to completely parenthesize the sequence. (This is the $(n-1)$-st Catalan number.) For example, for $n=4$, we have the following possibilities
	$$ (((V_1,V_2),V_3),V_4) \quad ((V_1,V_2),(V_3,V_4)) \quad ((V_1,(V_2,V_3)),V_4),\quad ((V_1,(V_2,V_3)),V_4) \quad (V_1,((V_2,V_3),V_4)).$$
	To every $p\in P$ we associate on object $V(p)$ of $C$ by evaluating the parenthesized sequence via $\otimes$. For example, for $p=(((V_1,V_2),V_3),V_4)$, we have $V(p)=((V_1\otimes V_2)\otimes V_3)\otimes V_4$. The class of objects of $C'$ is defined as all pairs $(p,V(p))$, where $p\in P$. A morphism in $C'$ from $(p,V(p))$ to $(q,V(q))$ is a morphism in $C$ from $V(p)$ to $V(q)$.

	%
	%
	
	We define a tensor product $\otimes'\colon C'\times C'\to C'$ as follows. On objects we set $$(p,V(p))\otimes'(q,V(q))=(pq,V(p)\otimes V(q))=(pq,V(pq)),$$ where $pq$ denotes the concatenation of two parenthesized sequences. For example, for $p=(((V_1,V_2),V_3),V_4)$ and $q=((W_1, W_2),W_3)$ we have $pq=((((V_1,V_2),V_3),V_4),((W_1,W_2),W_3)).$
	
	For morphisms $f\colon (p_1,V(p_1))\to(p_2,V(p_2))$ and $g\colon (q_1,V(q_1))\to (q_2,V(q_2))$ in $C'$ we define $$f\otimes' g\colon (p_1,V(p_1))\otimes'(q_1,V(q_1))=(p_1q_1,V(p_1)\otimes V(q_1))\to (p_2q_2,V(p_2)\otimes V(q_2))=(p_2,V(p_2))\otimes' (q_2,V(q_2))$$ as $f\otimes g$. 
	We define an associativity constraint $\Phi'$ for $\otimes'$ by defining
	$$\Phi'_{(p,V(p)),(q,V(q)),(r,V(r))}\colon (p,V(p))\otimes' ((q,(V(q))\otimes' (r,V(r)))\to ((p,V(p))\otimes' (q,(V(q)))\otimes' (r,V(r))$$
	as $\Phi_{V(p),V(q),V(r)}\colon V(p)\otimes (V(q)\otimes V(r))\to (V(p)\otimes V(q))\otimes V(r).$
	Similarly, we define a commutativity constraint $\Psi'$ for $\otimes'$ by defining $$\Psi'_{(p,V(p)),(q,V(q))}\colon (p,V(p))\otimes'(q,V(q))\to (q,V(q))\otimes' (p,V(p))$$
	as $\Psi_{V(p),V(q)}\colon V(p)\otimes V(q)\to V(q)\otimes V(p)$.
	The commutative diagrams for $\Phi$ and $\Psi$ yield the corresponding commutative diagrams for $\Phi'$ and $\Psi'$. Moreover, if $\mathds{1}$ together with $u\colon \mathds{1}\to \mathds{1}\otimes\mathds{1}$ is an identity object for $C$, then $((\mathds{1}),\mathds{1})$ together with $u'\colon ((\mathds{1}),\mathds{1})\to ((\mathds{1}),\mathds{1})\otimes' ((\mathds{1}),\mathds{1})$, defined as $u$ is an identity object for $C'$. Thus $(C',\otimes')$ is a tensor category.
	
	Let us show that $(C',\otimes')$ is tensor skeletal. Clearly, the tensor irreducible objects of $C'$ are exactly those of the form $((V),V)$, where $V$ belongs to $S$. Since $S$ is a skeleton of $C$, it follows that every object of $C'$ is isomorphic to a tensor irreducible object and that any two isomorphic tensor irreducible objects are equal. By construction, every object of $C'$ is a tensor product of finitely many tensor irreducible objects. For $p_1,p_2,q_1,q_2\in P$ with $p_1q_1=p_2q_2$ we have $p_1=p_2$ and $q_1=q_2$. It follows that condition (i) of Definition \ref{defi: unique otimes factorization} is satisfied. Thus $(C',\otimes')$ is a tensor skeletal tensor category.
	
	The functor $T\colon C'\to C,\ (p,V(p))\rightsquigarrow V(p)$ is clearly fully faithful. Since $S$ is a skeleton it is an equivalence of categories. Moreover, $T$ is a strict tensor functor by construction.
	%
\end{proof}

\begin{ex} \label{ex: tensor skeletal}
	Let $S$ denote the skeleton of $C=\vec_k$ consisting of all vector spaces of the form $k^n$, $(n\geq 0)$. The proof of Proposition \ref{prop: existence of tensor skeleton} yields a tensor skeletal tensor category $C'$ whose objects are in bijection with the completely parenthesized finite sequences of $k^n$'s. The tensor product $\otimes'$ for $C'$ is induced from the tensor product on $\vec_k$. No additional choices an in Example \ref{ex: tensor on skeleton} are required. 
\end{ex}

Let $(C,\otimes)$ be a neutral tannakian category over a field $k$. Then the tensor product of two objects of $C$ that are not zero objects is not a zero object (e.g., because this property holds in $\Rep(G)$, for any proalgebraic group $G$). It follows that the full subcategory of all objects of $C$ that are not zero objects is stable under the tensor product and therefore is naturally a tensor category.

\begin{defi}
	A neutral tannakian category $(C,\otimes)$ over a field $k$ is \emph{pointed skeletal} if it has exactly one zero object and the full subcategory of all objects that are not zero objects is tensor skeletal.
\end{defi}

We note that in a pointed skeletal neutral tannakian category $(C,\otimes)$ the zero object is not tensor irreducible. Moreover, an object of $C$, different from the zero object, is tensor irreducible in $C$ if and only if it is tensor irreducible in the tensor skeletal tensor category of all objects that are not the zero object. Every object of $C$, different from the zero object, is uniquely the tensor product of a finite completely parenthesized sequence of tensor irreducible objects. As before, we call the length of this sequence the tensor length of the object. The tensor irreducible objects together with the zero object form a skeleton of $C$.

We need to introduce the above notion for a rather technical reason: If one attempts to work with tensor skeletal neutral tannakian categories one runs into trouble with axiom (\ref{item: Existence of tensor factorization}) below, because for the zero object, this axiom does not seem to be expressible as a first order statement. We will need a version of Proposition \ref{prop: existence of tensor skeleton} for neutral tannakian categories.

\begin{cor} \label{cor: exists pointed skeleton}
	Let $(C,\otimes)$ be a neutral tannakian category over $k$. Then there exists a pointed skeletal neutral tannakian category $(C',\otimes')$ over $k$ and a $k$-linear tensor equivalence $(C',\otimes)\to (C,\otimes)$.
\end{cor}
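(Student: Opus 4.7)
The plan is to combine Proposition \ref{prop: existence of tensor skeleton} with a careful treatment of the zero object. The construction in Proposition \ref{prop: existence of tensor skeleton} produces a tensor skeleton, but a neutral tannakian category has additional structure (abelian, rigid, $k$-linear, equipped with a fibre functor) and in particular has zero objects, none of which is tensor irreducible. The strategy is therefore to first discard the zero objects, apply the earlier construction to the resulting tensor category of non-zero objects, and then glue back a single zero object.

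More precisely, I would proceed as follows. First, let $C_0\subseteq C$ be the full subcategory whose objects are all objects of $C$ that are not zero objects. As observed just before the definition of pointed skeletal neutral tannakian category, $C_0$ is stable under $\otimes$ (the tensor product of non-zero objects is non-zero in a neutral tannakian category), and the associativity/commutativity constraints of $C$ restrict to $C_0$. Any identity object $(\mathds{1},u)$ of $C$ lies in $C_0$ (since $\mathds{1}\not\cong 0$, as $\operatorname{End}(\mathds{1})\cong k$), so $(C_0,\otimes)$ is a tensor category. Apply Proposition \ref{prop: existence of tensor skeleton} to $(C_0,\otimes)$ to obtain a tensor skeletal tensor category $(C_0',\otimes')$ together with a strict tensor equivalence $T_0\colon (C_0',\otimes')\to (C_0,\otimes)$.

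Next, fix one zero object $0$ of $C$ and form $C'$ by adjoining to $C_0'$ a single formal object $0'$: its hom-sets to/from any object are singletons (the zero morphism). Extend $\otimes'$ to $C'$ by declaring $0'\otimes'X=X\otimes' 0'=0'$ on objects (and defining the action on morphisms as the zero morphism), and extend the associativity and commutativity constraints in the obvious way (every such constraint involving $0'$ is the identity of $0'$; the required pentagon and hexagon axioms are trivially satisfied since all parallel morphisms between objects involving $0'$ agree). Extend $T_0$ to a functor $T\colon C'\to C$ by sending $0'$ to the chosen zero object $0$ and all morphisms into/out of $0'$ to the corresponding zero morphisms; $T$ remains a strict tensor functor, and it is an equivalence because every zero object of $C$ is isomorphic to $0$ while $T_0$ handles the rest. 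Transport the abelian, rigid and $k$-linear structure from $C$ along this equivalence (using Remark/Prop.~1.3 style uniqueness of categorical structure under equivalence), and transport the fibre functor $\omega\colon C\to\vec_k$ to $\omega\circ T\colon C'\to\vec_k$ to turn $(C',\otimes')$ into a neutral tannakian category over $k$.

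It remains to check pointed skeletality. By construction $0'$ is the unique zero object of $C'$. The non-zero objects of $C'$ are exactly the objects of $C_0'$, which form a tensor skeletal category by the proposition. So $(C',\otimes')$ is pointed skeletal, and $T$ is by construction a $k$-linear strict tensor equivalence. The main obstacle, and the reason for separating out the zero object in the first place, is precisely that the unique tensor factorization axiom (i) of Definition \ref{defi: unique otimes factorization} fails at any zero object (since $0\otimes V=0$ for every $V$); handling the zero object by hand, rather than trying to accommodate it inside the skeleton construction, is what makes the argument go through cleanly and sets the stage for the first-order axiomatization in the next section.
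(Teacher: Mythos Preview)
Your proposal is correct and follows essentially the same route as the paper: remove the zero objects to obtain a tensor subcategory, apply Proposition~\ref{prop: existence of tensor skeleton} to it, adjoin a single formal zero object, extend the tensor structure and the functor trivially across it, and then deduce the abelian, rigid, $k$-linear and fibre-functor properties of $C'$ from those of $C$ via the resulting equivalence. The paper is slightly more explicit about one detail you glossed over, namely how a composite $V'\to 0'\to W'$ is defined inside $C'$ (it is the unique morphism whose image under $T$ is the zero morphism in $C$), but this is standard and your outline is otherwise complete.
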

\begin{proof}
	Let $(D,\otimes)$ denote the full subcategory of $(C,\otimes)$ consisting of all objects that are not zero objects. Applying Proposition \ref{prop: existence of tensor skeleton} to the tensor category $(D,\otimes)$ yields a tensor category $(D',\otimes')$ together with a strict tensor equivalence $T\colon (D',\otimes')\to(D,\otimes)$. Note that $D'$ does not have a zero object because $T$ is an equivalence of categories and $D$ does not have a zero object.

	We extend the category $D'$ to a category $C'$ by adding a zero object $\boldsymbol{0}$. So the objects of $C'$ are the disjoint union of the objects of $D'$ with $\bold{0}$. The morphisms between two objects in $C'$ that both belong to $D'$ are the same as the morphisms in $D'$.
	For an object $V'$ of $C'$ there is a unique morphism $0\colon \bold{0}\to V'$ in $C'$. Similarly, there is a unique morphism $0\colon V'\to\bold{0}$. Composition of morphisms in $C'$ is defined such that composition with a zero morphism always yields a zero morphism. For example, the composition $V'\to\bold{0}\to W'$ is the unique $f\in\Hom(V',W')$ such that $T(f)\colon T(V')\to T(W')$ is the zero morphism (in $C$).

We extend the functor $T\colon D'\to D$ to a functor $T\colon C'\to C$ by choosing $T(\boldsymbol{0})$ to be a zero object	of $C$ and by defining $T$ of a zero morphism to be a zero morphism. Then the functor $T\colon C'\to C$ defines an equivalence of categories. Since $C$ is abelian, it follows that also $C'$ is abelian.

	Next we extend $\otimes'\colon D'\times D'\to D'$ to a functor $\otimes'\colon C'\times C'\to C'$ in the only meaningful way. Namely, $\bold{0}\otimes V'=\bold{0}$ and $V'\otimes\bold{0}=\bold{0}$ for every object $V'$ of $C'$. Similarly, $f\otimes 0=0$ and $0\otimes f=0$ for any morphism $f$ in $C'$. (Here $0$ denotes an appropriate zero morphism.) The associativity and commutativity constraints on $D'$ extend trivially to associativity and commutativity constraints on $C'$. So $(C',\otimes')$ is a tensor category. Moreover, $T\colon (C',\otimes')\to (C,\otimes)$ is a tensor equivalence. Since $(C,\otimes)$ is rigid, it follows that also $(C',\otimes')$ is rigid. For an identity object $\mathds{1}'$ of $C'$ we have $\operatorname{End}(\mathds{1}')\simeq \operatorname{End}(T(\mathds{1}'))\simeq k$. For the induced $k$-linear structure on $C'$ the functor $T$ is $k$-linear. Composing $T$ with a fibre functor $\omega\colon C\to \vec_k$ yields a fibre functor for $C'$. Thus $(C',\otimes')$ is a neutral tannakian category over $k$. By construction $(C',\otimes')$ is pointed skeletal.
\end{proof}

The following lemma is needed in the next subsection to define the category $\TANN$.

\begin{lemma}\label{lemma: tensor skeletal implies small}
	Let $k$ be a field and $C$ a neutral tannakian category over $k$. If $C$ is pointed skeletal, then $C$ is small, i.e., the class of objects of $C$ is a set. 
\end{lemma}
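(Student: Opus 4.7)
My plan is to combine the tannakian reconstruction theorem with the unique tensor factorization property. Let $D$ denote the full subcategory of $C$ consisting of all non-zero objects; by hypothesis, $(D,\otimes)$ is tensor skeletal. Write $I$ for the class of tensor irreducible objects of $D$. Conditions (i) and (ii) in the definition of tensor skeletal say precisely that $I$ is a skeleton of $D$, i.e., every object of $D$ is isomorphic to a unique element of $I$.

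The first main step is to show that $I$ is a set. Choose a fibre functor $\omega\colon C\to\vec_k$, which exists because $C$ is neutral tannakian, and apply Theorem \ref{theo: tannaka main} to obtain a tensor equivalence $C\simeq \Rep(G)$ with $G=\autt(\omega)$. Any equivalence of categories induces a bijection on isomorphism classes, so it suffices to observe that $\Rep(G)$ has only a set of isomorphism classes. This is standard: every $n$-dimensional representation of $G$ is isomorphic to one with underlying vector space $k^n$, and representations of $G$ on $k^n$ correspond, via Yoneda, to morphisms of affine group schemes $G\to\Gl_n$, i.e., to $k$-algebra homomorphisms from the coordinate ring $k[\Gl_n]$ to $k[G]$, which form a set. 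Taking the union over all $n\geq 0$ shows that the isomorphism classes of $\Rep(G)$ form a set, hence so does $I$.

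The second step is to invoke Lemma \ref{lemma: unique tensor factorization}: every object of $D$ is uniquely the tensor product of a finite completely parenthesized sequence of elements of $I$. This sequence is a function of the object, so assigning to each object of $D$ its unique factorization gives an injection from the class of objects of $D$ into the collection of finite completely parenthesized sequences over $I$. Since $I$ is a set, the latter collection is a set, so the objects of $D$ form a set. Adjoining the unique zero object exhibits the class of objects of $C$ as a set.

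I do not anticipate a serious obstacle. The only point that deserves slight care is the identification of $I$ as an honest skeleton of $D$ and the observation that $\Rep(G)$ has only a set of isomorphism classes; both follow immediately from the definitions together with the representability of $G$ and $\Gl_n$.
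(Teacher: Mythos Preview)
Your proposal is correct and follows essentially the same approach as the paper: use the tannaka reconstruction theorem to identify the isomorphism classes of $C$ with those of $\Rep(G)$, observe that the latter form a set (since every representation is isomorphic to one on some $k^n$), deduce that the tensor irreducible objects form a set, and then use the unique tensor factorization to conclude. Your argument is in fact slightly more explicit than the paper's in justifying why representations on $k^n$ form a set and in invoking Lemma~\ref{lemma: unique tensor factorization} to obtain the injection into parenthesized sequences.
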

\begin{proof}
	Let $\omega\colon C\to \vec_k$ be a fibre functor and set $G=\autt(\omega)$. According to Theorem \ref{theo: tannaka main} we have an equivalence of categories $C\to \Rep(G)$. Every representation of $G$ is isomorphic to a representation of $G$ on $k^n$ for some $n\geq 0$. Thus the class of objects of a skeleton of $\Rep(G)$ is a set. Since $C$ is pointed skeletal, the class of all tensor irreducible objects of $C$ together with the zero object is a skeleton of $C$. Since equivalent categories have isomorphic skeletons, it follows that the class of tensor irreducible objects of $C$ is a set. Since every object of $C$, different from the zero object, is a finite tensor product of tensor irreducible objects it follows that the class of objects of $C$ is a set.
\end{proof}

\subsection{$\TANN$: The category of neutral tannakian categories}

Our goal is to study proalgebraic groups from a model theoretic perspective by axiomatizing their categories of representations. The models of our theory $\PROALG$ will correspond to pointed skeletal neutral tannakian categories with a fibre functor. The models of $\PROALG$ together with the homomorphisms, i.e., the structure preserving maps, form a category that is equivalent to a certain category of neutral tannakian categories that we now describe in detail.

We define the category $\TANN$ as follows. The objects of $\TANN$ are triples $(k,C,\omega)$, where $k$ is a field, $C$ is a pointed skeletal neutral tannakian category over $k$ and $\omega\colon C\to\vec_k$ is a fibre functor. We note that by Lemma \ref{lemma: tensor skeletal implies small} pointed skeletal neutral tannakian categories are small, so there is no set theoretic obstruction to forming this category, like the obstruction one encounters when attempting to form the category of all categories.

A morphism in $\TANN$ from $(k,C,\omega)$ to $(k',C',\omega')$ is a pair $(\lambda,T,\alpha)$, where $\lambda\colon k\to k'$ is a morphism of fields, $T\colon C\to C'$ is a $k$-linear strict tensor functor that preserves tensor irreducible objects and $\alpha\colon \omega_{k'}\to\omega' T$ is an isomorphism of tensor functors. Here $\omega_{k'}\colon C\to \vec_{k'}$ denotes the tensor functor obtained by composing $\omega$ with the tensor functor $\vec_k\to \vec_{k'},\ V\rightsquigarrow V\otimes_k{k'}$ induced by $\lambda\colon k\to k'$.


The composition of two morphisms $(\lambda,T,\alpha)\colon(k,C,\omega)\to (k',C',\omega')$ and $(\lambda',T',\alpha')\colon (k',C',\omega')\to  (k'',C'',\omega'')$ in $\TANN$ is the pair $(\lambda'\lambda,T'T,\gamma)\colon (k,C,\omega)\to (k'',C'',\omega'')$, where $\gamma\colon\omega_{k''}\to \omega''T'T$ is given by
$$\gamma_V\colon\omega(V)\otimes_k k''=(\omega(V)\otimes_k k')\otimes_{k'} k''\xrightarrow{\alpha_V\otimes k''}\omega'(T(V))\otimes_{k'}k''\xrightarrow{\alpha'_{T(V)}}\omega''(T'(T(V))$$ for every object $V$ of $C$.

We also  define a category $\PRO$. The objects are pairs $(k,G)$, where $k$ is a field and $G$ a proalgebraic group over $k$. A morphism $(\lambda,\f)\colon (k,G)\to (k', G')$ in \linebreak[4] $\PRO$ is a pair $(\lambda,\f)$, where $\lambda\colon k\to k'$ is a morphism of fields and $\f\colon G'\to G_{k'}$ is a morphism of proalgebraic groups over $k'$. Here $G_{k'}$ is the base change of $G$ from $k$ to $k'$ via $\lambda$. The composition $(\lambda'', \f'')\colon (k,G)\to (k',G'')$ of two morphism $(\lambda,\f)\colon (k,G)\to (k', G')$ and $(\lambda',\f')\colon (k',G')\to (k'', G'')$ is defined by $\lambda''=\lambda'\lambda$ and $\f''\colon G''\xrightarrow{\f'}G'_{k''}\xrightarrow{\f_{k''}}(G_{k'})_{k''}=G_{k''}$.

The following proposition is essential for establishing the close relationship between models of $\PROALG$ and proalgebraic groups.

\begin{prop}  \label{prop: from TANN to PROALGEBRAIC GROUPS} The functor $(k,C,\omega)\rightsquigarrow (k, \autt(\omega))$ from the category $\TANN$ to the category $\PRO$ is full, essentially surjective and induces a bijection on the isomorphism classes.
\end{prop}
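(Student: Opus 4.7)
Three things must be shown: the functor is essentially surjective, full, and injective on isomorphism classes. My plan is to handle them in that order, because once fullness is established, injectivity on isomorphism classes reduces to showing that a morphism in $\TANN$ whose image in $\PRO$ is an isomorphism is itself an isomorphism.

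For essential surjectivity, let $(k,G)\in\PRO$ and start from the neutral tannakian category $\Rep(G)$ equipped with its forgetful fibre functor. Applying Corollary~\ref{cor: exists pointed skeleton} produces a pointed skeletal neutral tannakian category $C$ over $k$ together with a $k$-linear tensor equivalence $C\to\Rep(G)$, and composing with the forgetful functor yields a fibre functor $\omega$ on $C$. By Remark~\ref{rem: recover G} together with the tensor equivalence, $\autt(\omega)\simeq G$.

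Fullness is where the actual work lies. Given $(\lambda,\phi)\colon(k,G_1)\to(k',G_2)$ in $\PRO$ with $G_i=\autt(\omega_i)$, Theorem~\ref{theo: tannaka main} supplies equivalences $C_i\simeq\Rep(G_i)$; combined with base change along $\lambda$ and pullback along $\phi\colon G_2\to G_{1,k'}$, this yields a $k$-linear tensor functor $S\colon C_1\to C_2$. The crucial move is to replace $S$, up to an isomorphism of tensor functors, by a functor $T$ that is literally strict and carries tensor irreducibles to tensor irreducibles. Because $C_2$ is pointed skeletal, the image $S(V)$ of a tensor irreducible $V$ is isomorphic to a unique tensor irreducible in $C_2$, which we take as $T(V)$; for a non-zero $X$ in $C_1$ with its unique completely parenthesized tensor factorization into tensor irreducibles (Lemma~\ref{lemma: unique tensor factorization}), strictness forces $T(X)$ to be the same parenthesized tensor product of the $T(V_i)$. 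The chosen isomorphisms $S(V)\to T(V)$ glue into a natural isomorphism $\alpha$ of tensor functors, and unique tensor factorization ensures that $T$ is a well-defined $k$-linear strict tensor functor, so that $(\lambda,T,\alpha)$ is a morphism in $\TANN$ lifting $(\lambda,\phi)$.

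Finally, for injectivity on isomorphism classes, suppose a $\TANN$-morphism $(\lambda,T,\alpha)$ maps to an isomorphism in $\PRO$, so $\lambda$ is a field isomorphism and the induced $\phi\colon G_2\to G_{1,k'}$ is an isomorphism of proalgebraic groups. Under the equivalences $C_i\simeq\Rep(G_i)$ the functor $T$ then becomes an equivalence. Since $T$ preserves tensor irreducibles and pointed skeletality forces isomorphic tensor irreducibles to coincide, $T$ restricts to a bijection on tensor irreducibles; strictness together with unique tensor factorization upgrades this to a bijection on all non-zero objects, while $k$-linearity sends the zero object to the zero object, so $T$ is an isomorphism of categories. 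With $\lambda$ and $\alpha$ already isomorphisms, $(\lambda,T,\alpha)$ is an isomorphism in $\TANN$. The main obstacle throughout is the rigid strictness and tensor-irreducibility-preservation required of $\TANN$-morphisms, which is precisely where the pointed skeletal structure and the unique tensor factorization property become indispensable.
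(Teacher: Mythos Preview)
Your plan is correct and mirrors the paper's argument closely: essential surjectivity via Corollary~\ref{cor: exists pointed skeleton} and Remark~\ref{rem: recover G}; fullness by passing through $\Rep(G_1)\to\Rep(G_{1,k'})\to\Rep(G_2)$ and then strictifying the resulting tensor functor using pointed skeletality and unique tensor factorization (the paper does exactly this, only it skips naming the intermediate functor $S$ and builds $T$ and $\alpha$ directly on tensor irreducibles by induction on tensor length); and bijectivity on isomorphism classes by showing that the morphism lifted via fullness is an isomorphism, using that $T$ is an equivalence and hence, by strictness and unique tensor factorization, a bijection on objects. One small point to tighten: the $\alpha$ appearing in a $\TANN$-morphism must be an isomorphism $\omega_{1,k'}\to\omega_2 T$, not $S\to T$; you get the correct $\alpha$ by applying $\omega_2$ to your isomorphism $S\to T$ and composing with the canonical identification $\omega_{1,k'}\simeq\omega_2 S$ that comes for free from the construction of $S$.
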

\begin{proof}
	Let $(k,C,\omega)$ be an object of $\TANN$. From Theorem \ref{theo: tannaka main} we know that $G=\underline{\Aut}^\otimes(\omega)$ is a proalgebraic group over $k$ and so we obtain an object $(k,G)$ of $\PRO$. A morphism $(\lambda,T,\alpha)\colon (k,C,\omega)\to (k',C',\omega')$ in $\TANN$ defines a morphism $(\lambda,\f)\colon (k,G)\to (k',G')$ in $\PRO$ as follows: Let $R'$ be a $k'$-algebra and $g'\in G'(R')=\operatorname{Aut}^\otimes(\omega'_{R'})$. So for every object $V'$ of $C'$, we have an $R'$-linear automorphism $g'_{V'}\colon\omega'(V')\otimes_{k'}R'\to \omega'(V')\otimes_{k'}R'$. We define an element $\f(g')\in G_{k'}(R')=G(R')=\operatorname{Aut}^\otimes(\omega_{R'})$ by
	$$\f(g')_V\colon \omega(V)\otimes_k{R'}\simeq (\omega(V)\otimes_k k')\otimes_{k'}{R'}\xrightarrow{\alpha_V\otimes R'}\omega'(T(V))\otimes_{k'}R'\xrightarrow{g'_{T(V)}}\omega'(T(V))\otimes_{k'}R'\simeq\omega(V)\otimes_k{R'}$$
	for every object $V$ of $C$. Then $\f_{R'}\colon G'(R')\to G_{k'}(R'),\ g'\mapsto \f(g')$ is a morphism of groups that is functorial in $R'$ and therefore defines a morphism $\f\colon G'\to G_{k'}$ of proalgebraic groups over $k'$. We thus have a functor from $\TANN$ to $\PRO$.
	
	Let us show that this functor is full. We assume that a morphism $(\lambda,\f)\colon (k, G)\to (k',G')$ is given. We will define a morphism $(\lambda, T,\alpha)\colon (k,C,\omega)\to (k',C',\omega')$ that induces $(\lambda,\f)$. Let us first explain the idea for the construction of $T$: We have tensor functors
	\begin{equation} \label{eqn: functor for inverse} C\xrightarrow{\omega}\Rep(G)\to \Rep(G_{k'})\to \Rep(G'),\end{equation}
	where $\Rep(G)\to \Rep(G_{k'})$ is given by $V\rightsquigarrow V\otimes_k k'$ and $\Rep(G_{k'})\to \Rep(G')$ is the restriction via $\f\colon G'\to G_{k'}$. Composing the functor (\ref{eqn: functor for inverse}) with a quasi-inverse of the tensor equivalence $C'\xrightarrow{\omega'} \Rep(G')$ yields a functor $T\colon C\to C'$. However, it is a priori not clear that a quasi-inverse can be chosen in such a way that $T$ is a strict tensor functor that preserves tensor irreducible objects. Moreover, the construction of $T$ is intertwined with the construction of $\alpha$.

	To define $T$ and $\alpha$, consider first a tensor irreducible object $V$ of $C$. The representation of $G=\underline{\Aut}^\otimes(\omega)$ on $\omega(V)$, induces a representation of $G_{k'}$ on $\omega(V)\otimes_k k'$ and by restriction via $\f\colon G'\to G_{k'}$ we obtain a representation of $G'$ on $\omega(V)\otimes_k k'$. By Theorem \ref{theo: tannaka main} the category $C'$ is equivalent (via $\omega'$) to the category of representations of $G'$. Thus there exists an object $T(V)$ of $C'$ and an isomorphism $\alpha_V\colon\omega(V)\otimes_k k'\to \omega'(T(V))$ of representations of $G'$. In fact, since $C'$ is tensor skeletal, we may choose $T(V)$ to be tensor irreducible.
	

	 This defines $T$ on tensor irreducible objects. To define $T$ on an object $V$ of $C$, different from the zero object and of tensor length $n\geq 2$, we may assume that $T$ has already been defined on objects of tensor length less than $n$. We know from Lemma \ref{lemma: unique tensor factorization} that $V$ is uniquely of the form $V=V_1\otimes V_2$, where $V_1$ and $V_2$ have tensor length less than $n$. We can thus define $T(V)$ as $T(V)=T(V_1)\otimes'T(V_2)$. Finally, we define $T$ of the zero object of $C$ to be the (unique) zero object of $C'$. 
%
This completes the definition of $T$ on objects. Note that we have $T(V\otimes W)=T(V)\otimes' T(W)$ for all objects $V, W$ of $C$.
	
	We extend the definition of $\alpha$ in a similar manner: We have already defined $\alpha_V$ for tensor irreducible objects $V$. Let $V$ be an object of $C$ of tensor length $n$ and assume $\alpha_V$ has been defined on objects of tensor length less than $n$. As $V$ is of the form $V=V_1\otimes V_2$ with $V_1$ and $V_2$ of tensor length less then $n$, we can define $\alpha_V=\alpha_{V_1\otimes V_2}$ as the unique map making
	\begin{equation} \label{eqn: diagram for definition of alpha} 
	\xymatrix{
		\omega(V_1\otimes V_2)\otimes_k k' \ar^{\alpha_{V_1\otimes V_2}}[r] \ar_\simeq[d] & \omega'(T(V_1\otimes V_2)) \ar^\simeq[d] \\
		(\omega(V_1)\otimes_k k')\otimes_{k'}(\omega(V_2)\otimes_k k') \ar^-{\alpha_{V_1}\otimes\alpha_{V_2}}[r] & \omega'(T(V_1))\otimes_{k'}(\omega'(T(V_2))}
	\end{equation}
	commutative. For the zero object $V$, $\alpha_V$ is defined as the zero map. Then, by construction, the above diagram commutes for any pair of objects $V_1$ and $V_2$ of $C$.
	
	We next define $T$ on morphisms. Let $f\colon V\to W$ be a morphism in $C$. We then have a morphism $\omega(f)\colon \omega(V)\to\omega(W)$ of representations of $G$, that induces a morphism $\omega(f)\otimes k'\colon \omega(V)\otimes_k k'\to\omega(W)\otimes_k k'$ of representations of $G_{k'}$. This is also a morphism of representations of $G'$. In fact, we have morphisms of representations of $G'$
	$$
	\xymatrix{
		\omega(V)\otimes_k k' \ar^{\omega(f)\otimes k'}[r] \ar_{\alpha_V}[d] & \omega(W)\otimes_k k' \ar^{\alpha_W}[d] \\
		\omega'(T(V)) & \omega'(T(W))
	}
	$$
	where the vertical maps are isomorphisms. Since $\omega'$ induces an equivalence of categories, there exists a unique morphism $T(f)\colon T(V)\to T(W)$ in $C'$ such that
	\begin{equation} \label{eqn: diagram T on morphisms}
		\xymatrix{
		\omega(V)\otimes_k k' \ar^{\omega(f)\otimes k'}[r] \ar_{\alpha_V}[d] & \omega(W)\otimes_k k' \ar^{\alpha_W}[d] \\
		\omega'(T(V)) \ar^{\omega'(T(f))}[r] & \omega'(T(W))
	}
	\end{equation}
	commutes. This completes the definition of $(\lambda, T, \alpha)$. Let us check that $T$ is indeed a strict tensor functor. To see that $T$ is compatible with the associativity constraint, let $U,V,W$ be objects of $C$ and $\Phi_{U,V,W}\colon U\otimes(V\otimes W)\to (U\otimes V)\otimes W$ the corresponding associativity isomorphism. We have the following commutative diagram:
	
	$$
	\xymatrix{
		\omega(U\otimes (V\otimes W))\otimes_k{k'}  \ar^{\omega(\Phi_{U,V,W})\otimes k'}[r] \ar_\simeq[d] & \omega((U\otimes V)\otimes W)\otimes_k k' \ar^\simeq[d] \\
		(\omega(U)\otimes_k k')\otimes_{k'}\big((\omega(V)\otimes_k k')\otimes_{k'}(\omega(W)\otimes_k k')\big) \ar[r] \ar_{\alpha_U\otimes(\alpha_V\otimes \alpha_W)}[d] & 	\big((\omega(U)\otimes_k k')\otimes_{k'}(\omega(V)\otimes_k k')\big)\otimes_{k'}(\omega(W)\otimes_k k') \ar^{(\alpha_U\otimes\alpha_V)\otimes \alpha_W}[d] \\
		\omega'(T(U))\otimes_{k'}\big(\omega'(T(V))\otimes_{k'}\omega'(T(W))\big) \ar[r] \ar_\simeq[d] & \big(\omega'(T(U))\otimes_{k'}\omega'(T(V))\big)\otimes_{k'}\omega'(T(W)) \ar^\simeq[d] \\
		\omega'(T(U)\otimes(T(V)\otimes T(W))) \ar_=[d] \ar^{\omega'(\Phi'_{T(U),T(V),T(W)})}[r] & \omega'((T(U)\otimes T(V))\otimes T(W)) \ar^=[d] \\
		\omega'(T(U\otimes(V\otimes W))) \ar@{..>}^{\omega'(T(\Phi_{U,V,W}))}[r] & \omega'(T((U\otimes V)\otimes W))
	}	
	$$
	
	Thanks to the commutativity of (\ref{eqn: diagram for definition of alpha}), we know that the map from the upper left to the lower left corner is $\alpha_{U\otimes(V\otimes W)}$. Similarly, the map from the upper right to the lower right corner is $\alpha_{(U\otimes V)\otimes W}$. Since, by definition, $T(\Phi_{U,V,W})$ is the unique morphism such that $\omega'(T(\Phi_{U,V,W}))$ makes the outer rectangle of the above diagram commute, we conclude that $T(\Phi_{U,V,W})=\Phi'_{T(U),T(V),T(W)}$ as desired. 
In a similar fashion one shows that $T(\Psi_{U,V})=\Psi'_{T(U),T(V)}$.	
Since $T$ preserves identity objects we conclude that $T$ is a strict tensor functor. Moreover, the commutativity of (\ref{eqn: diagram T on morphisms}) shows that $T$ is $k$-linear and by construction $T$ preserves tensor irreducible objects.

The commutativity of (\ref{eqn: diagram T on morphisms}) also shows that $\alpha\colon \omega_{k'}\to \omega'T$ is an isomorphism of functors and the commutativity of (\ref{eqn: diagram for definition of alpha}) shows that $\alpha$ is an isomorphism of tensor functors.
Thus $(\lambda,T,\alpha)$ is indeed a morphism in $\TANN$.

 As the $\alpha_V$'s are morphisms of representations of $G'$ and $G'$ is acting on $\omega(V)\otimes_k k'$ through the restriction via $\f\colon G'\to G_{k'}$, it is then clear that the morphism $(\lambda,T,\alpha)$ induces the morphism $(\lambda,\f)$ we started with. Thus the functor  $(k,C,\omega)\rightsquigarrow (k, \autt(\omega))$ is full.

We next show that it is essentially surjective. Let $G$ be a proalgebraic group over a field $k$. Applying Corollary \ref{cor: exists pointed skeleton} to the neutral tannakian category $\Rep(G)$ yields a pointed skeletal neutral tannakian category $(C,\otimes)$ and a $k$-linear tensor equivalence $F\colon C\to \Rep(G)$. We define a fibre functor $\omega\colon C\to\vec_k$ by composing $F$ with the forgetful functor $\omega_G\colon\Rep(G)\to\vec_k$. Then $(k,C,\omega)$ is an object of $\TANN$. 
Moreover, since $F$ is a tensor equivalence, the natural morphism  of functors $\autt(\omega_G)\to \autt(\omega)$ is an isomorphism. Since $\autt(\omega_G)$ is isomorphic to $G$ (Remark \ref{rem: recover G}) we see that $(k,C,\omega)\rightsquigarrow (k, \autt(\omega))$ is essentially surjective.

\medskip

Finally, we establish the bijection on isomorphism classes. Since we already proved the essential surjectivity, it suffices to show the following: For objects $(k,C,\omega)$ and $(k',C',\omega')$ of $\TANN$, if $(k,\autt(\omega))$ and $(k',\autt(\omega'))$ are isomorphic, then $(k,C,\omega)$ and $(k',C',\omega')$ are isomorphic. We abbreviate $G=\autt(\omega)$ and $G'=\autt(\omega')$. Let $(\lambda,\f)\colon (k, G)\to (k',G')$ be an isomorphism in $\PRO$. In the above proof that $(k,C,\omega)\rightsquigarrow (k, \autt(\omega))$ is full, we have already seen how to construct a morphism $(\lambda, T,\alpha)\colon (k,C,\omega)\to(k',C',\omega')$ from $(\lambda,\f)$. We claim that $(\lambda, T,\alpha)$ is an isomorphism.

We first show that $T$ is surjective on objects. Let $V'$ be a tensor irreducible object of $C'$. Then $\omega'(V')$ is a representation of $G'$. By assumption $\lambda\colon k\to k'$ is an isomorphism of fields and $\f\colon G'\to G_{k'}$ is an isomorphism of proalgebraic groups. In the sequel we will use $\lambda^{-1}\colon k'\to k$ to base change from $k'$ to $k$. For example, $\omega'(V')\otimes_{k'}k$ is a representation of $G'_k$. But $G'_{k}$ is isomorphic to $G$ via $G_{k'}\xrightarrow{\f_k} (G_{k'})_k\simeq G$ and so we can consider  $\omega'(V')\otimes_{k'}k$ to be a representation of $G$. Since $\omega\colon C\to\Rep(G)$ is an equivalence of categories, there exists an object $V$ of $C$ such that $\omega(V)$ is isomorphic to $\omega'(V')\otimes_{k'}k$ as a representation of $G$. Moreover, since $C$ is tensor skeletal, we can choose $V$ to be tensor irreducible. It follows that $\omega(V)\otimes_k k'$ is isomorphic to $\omega'(V')$ as a representation of $G'$. As $T(V)$ is, by definition, the unique tensor irreducible object of $C'$ such that $\omega'(T(V))$ is isomorphic to $\omega(V)\otimes_k k'$ as a representation of $G'$, it follows that $T(V)=V'$.
So $T$ is surjective on tensor irreducible objects. An arbitrary object $V'$ of $C'$, different from the zero object, is a finite tensor product of tensor irreducible objects. We can choose an inverse image under $T$ for all these tensor irreducible objects, form their tensor product in $C$ and then apply the strict tensor functor $T$ to see that $V'$ is in the image of $T$.

We next show that $T$ is injective on objects. First let $V_1$ and $V_2$ be tensor irreducible objects of $C$ such that $T(V_1)=T(V_2)$. Then $\omega'(T(V_1))=\omega'(T(V_2))$ as representation of $G'$ and $\omega'(T(V_1))\otimes_{k'}k=\omega'(T(V_2))\otimes_{k'} k$ as representation of $G$. But $\omega'(T(V_1))\otimes_{k'}k\simeq \omega(V_1)$ as representation of $G$ and similarly for $V_2$. So $\omega(V_1)$ and $\omega(V_2)$ are isomorphic representations of $G$. But then $V_1$ and $V_2$ must be isomorphic objects of $C$. Since $V_1$ and $V_2$ are tensor irreducible, it follows that $V_1=V_2$. Thus $T$ is injective on tensor irreducible objects. From the uniqueness in Lemma \ref{lemma: unique tensor factorization} it then follows that $T$ is injective on objects.

Using diagram (\ref{eqn: diagram T on morphisms}), we see that $T$ is fully faithful. Since $T$ is bijective on objects, $T$ is an isomorphism of categories, i.e., there exists a functor $T^{-1}\colon C'\to C$ such that $TT^{-1}=\id_{C'}$ and $T^{-1}T=\id_C$. Since $T$ is a strict tensor functor, also $T^{-1}$ is a strict tensor functor. Similarly, as $T$ preserves tensor irreducible objects, also $T^{-1}$ preserves tensor irreducible objects. Finally, $T^{-1}\colon C'\to C$ is $k'$-linear, where the $k'$-linear structure on $C$ is defined via $\lambda^{-1}\colon k'\to k$. 

For every object $V'$ of $C'$ we have a $k'$-linear isomorphism $\alpha_{T^{-1}(V')}\colon \omega(T^{-1}(V'))\otimes_k k'\to \omega'(V')$. The base change of this map via $\lambda^{-1}\colon k'\to k$ is a $k$-linear isomorphism $\alpha_{T^{-1}(V')}\otimes k\colon\omega(T^{-1}(V'))\to \omega'(V')\otimes_{k'}k$.  We define $(\alpha^{-1})_{V'}=(\alpha_{T^{-1}(V')}\otimes k)^{-1}$. Then $\alpha^{-1}\colon \omega'_k\to \omega T^{-1}$ is an isomorphism of functors. Since $\alpha$ is an isomorphism of tensor functors, also $\alpha^{-1}$ is an isomorphism of tensor functors. Finally, $(\lambda^{-1},T^{-1},\alpha^{-1})$ is an inverse to $(\lambda,T,\alpha)$ in $\TANN$.
%
\end{proof}

The functor of Proposition \ref{prop: from TANN to PROALGEBRAIC GROUPS} is not faithful. This is reflected in the proof of the fullness of the functor by the fact that the $\alpha_V$'s, for $V$ tensor irreducible, can be chosen arbitrarily.

We note that for Proposition \ref{prop: from TANN to PROALGEBRAIC GROUPS} to be valid it is important to consider pointed skeletal neutral tannakian categories. For example, the neutral tannakian categories in Example \ref{ex: tensor on skeleton} and \ref{ex: tensor skeletal} both correspond to the trivial proalgebraic group. However, these two categories are not isomorphic.

\section{$\PROALG$: Neutral tannakian categories as first order structures}

\label{section: Neutral tannakian categories as first order structures}

In this section we define a many-sorted first order theory PROALG such that the isomorphism classes of models of PROALG are in bijection with the isomorphism classes of proalgebraic groups. The idea is to axiomatize pointed skeletal neutral tannakian categories with a fibre functor.

\subsection{The language}

We define a many-sorted first order theory PROALG as follows:

\vspace{5mm}

\noindent\emph{Sorts:}

\medskip

We have three different types of sorts: The \emph{field} sort, the \emph{objects} sorts and the \emph{morphisms} sorts. The objects sorts and the morphisms sorts split further into the \emph{base} objects/morphisms sorts and the \emph{total} objects/morphisms sorts. We will use the following notation:

With $k$ we denote the universe of the field sort. For every pair 
$p=(m,n)$ of integers $m,n\geq 1$ we have two objects sorts: The base objects sort with universe $B_p$ and the total objects sort with universe $X_p$. For every pair $p$, $q$, where, as above $p=(m,n)$ and $q=(m',n')$ with $m,n,m',n'\geq 1$, we have two morphisms sorts: The base morphisms sort with universe $B_{p,q}$ and the total morphisms sort with universe $X_{p,q}$.

The idea is that $B_p$, where $p=(m,n)$, represents $k$-vector spaces of tensor length $m$ and dimension $n$, considered as objects of a category, i.e., every element of $B_p$ corresponds to such a vector space. On the other hand, $X_p$ contains the actual vector spaces.

Similarly, for morphisms: $B_{p,q}$ represents morphisms from vector spaces in $B_p$ to vector spaces in $B_q$; every element of $B_{p,q}$ corresponds to a morphism, the actual linear maps are encoded in $X_{p,q}$. 

%
%
%

\vspace{5mm}

\noindent\emph{Constant symbols:}

\begin{itemize}
	\item We have two constant symbols $0$ and $1$ for the field-sort.
\end{itemize}

\vspace{5mm}

\noindent\emph{Relation symbols:}

\begin{itemize}
	\item For every $p$ we have a unary relation symbol $0_p$ on $X_p$. 
	\item For every $p$ we have a ternary relation symbol $A_p$ on $X_p$. (The ``A'' is for addition.)
	\item We have a constant symbol $1$ in $B_{(1,1)}$.
	\item For every $p=(m,n)$ we have an $n$-ary relation symbol $LI_p$ on $X_p$. (``LI'' is for linear independence.)
\end{itemize}

\vspace{5mm}

\noindent\emph{Function symbols:}

\begin{itemize}
	\item We have two binary function symbols $+$ and $\cdot$ for the field-sort.
	\item For every $p$ we have a function symbol $\pi_p$ with interpretation $\pi_p\colon X_p\to B_p$. 
	\item For every pair $p,q$ we have a function symbol $\pi_{p,q}$ with interpretation $\pi_{p,q}\colon X_{p,q}\to B_{p,q}$.
	\item For every $p$ we have a function symbol $SM_p$ with interpretation $SM_p\colon k\times X_p\to X_p$. (``SM'' is for scalar multiplication.)
	\item For all $p,q$ we have function symbols $S^B_{p,q}$ and $T^B_{p,q}$ with interpretations $S^B_{p,q}\colon B_{p,q}\to B_p$ and $T^B_{p,q}\colon B_{p,q}\to B_q$. (``S'' is for source and ``T'' is for target of a morphism.)
	\item  For all $p,q$ we have function symbols $S^X_{p,q}$ and $T^X_{p,q}$ with interpretations $S^X_{p,q}\colon X_{p,q}\to X_p$ and $T^X_{p,q}\colon X_{p,q}\to X_q$.
	\item For $p=(m,n)$ and $q=(m',n')$ with $m,n,m',n'\geq 1$ we set $pq=(m+m',nn')$. We have function symbols $\otimes_{p,q}$ with interpretations $\otimes_{p,q}\colon X_p\times X_q\to X_{pq}$.
\end{itemize}

We denote this many-sorted language with $\mathcal{L}$.


\subsection{The axioms}

Rather than stating the axioms explicitly in the above language, we state their mathematical content. It is however clear that all the axioms below can be expressed as a collection of $\mathcal{L}$-sentences.


\renewcommand{\labelenumi}{{\rm (\arabic{enumi})}}

\begin{enumerate}
	\item $(k,+,\cdot, 0, 1)$ is a field. 
	\item For every $p$, the map $\pi_p\colon X_p\to B_p$ is surjective. To simplify the notation we set $X_p(b)=\pi_p^{-1}(b)$ for $b\in B_p$.
	\item Existence of zero: For every $V=X_p(b)$, (where $b\in B_p$), the set $V\cap 0_p$ has a unique element $0_V$.
	\item Vector space addition: For $v_1,v_2,v_3\in X_p$, if $A_p(v_1,v_2,v_3)$ holds, then $\pi_p(v_1)=\pi_p(v_2)=\pi_p(v_3)$.
	Moreover, for $b=\pi_p(v_1)=\pi_p(v_2)=\pi_p(v_3)\in B_p$, and $V=X_p(b)$, the set $\{(v_1,v_2,v_3)\in V^3|\ A_p(v_1,v_2,v_3)\}$ is the graph of a map  
	$+_V\colon V\times V\to V$, that defines on $V$ the structure of an abelian group with identity element $0_V$. 
	\item Scalar multiplication: If $\lambda\in k$ and $v\in X_p(b)$, for some $b\in B_p$, then also $SM_p(\lambda,a)\in X_p(b)$. Moreover, for every $V=X_p(b)$, the restriction of $SM_p$ to $\cdot_V\colon k\times V\to V$ defines a scalar multiplication on $V$ such that $V$ is a vector space over $k$ with addition $+_V$. 
	\item Dimension: Every $X_p(b)$ ($b\in B_p$) is an $n$-dimensional $k$-vector space, where $p=(m,n)$. 
	\item The maps $\pi_{p,q}\colon X_{p,q}\to B_{p,q}$ are surjective. For $f\in B_{p,q}$ we set $X_{p,q}(f)=\pi_{p,q}^{-1}(f)$.
	\item The diagram
	$$
	\xymatrix{
	X_p \ar_{\pi_p}[d] & X_{p,q} \ar^{\pi_{p,q}}[d] \ar^{T^X_{p,q}}[r] \ar_{S^X_{p,q}}[l] & X_q \ar^{\pi_q}[d] \\
	B_p & B_{p,q} \ar_{S^B_{p,q}}[l] \ar^{T^B_{p,q}}[r] & B_q
	}
	$$
	commutes.
	\item \label{item: Morphisms} Morphisms: For every $f\in B_{p,q}$ the map $S^X_{p,q}\colon X_{p,q}(f)\to X_p(S^B_{p,q}(f))$ is bijective. The image of $(S^X_{p,q},T^X_{p,q})\colon X_{p,q}(f)\to X_p(S^B_{p,q}(f))\times X_q(T^B_{p,q}(f))$, is the graph of a $k$-linear map $\widetilde{f}\colon X_p(S^B_{p,q}(f))\to X_q(T^B_{p,q}(f))$. 
	
	Moreover, if $f,g\in B_{p,q}$ with $S^B_{p,q}(f)=S^B_{p,q}(g)$ and $T^B_{p,q}(f)=T^B_{p,q}(g)$ such that $\widetilde{f}=\widetilde{g}$, then $f=g$.
	
	\item \label{item: Existence of the identity} Existence of the identity: For all $b\in B_p$ there exists an $f\in B_{p,p}$ such that $S^X_{p,p}(a)=T^X_{p,p}(a)$ for all $a\in X_{p,p}(f)$.
	
	\item \label{item: Composition of morphisms} Composition of morphisms: For $f\in B_{p,q}$ and $g\in B_{q,r}$ with $T^B_{p,q}(f)=S^B_{q,r}(g)$ there exists $h\in B_{p,r}$ such that $\widetilde{h}=\widetilde{g}\circ\widetilde{f}$.

	
	\item \label{item: Linearity} Linearity: For $f,g\in B_{p,q}$ with $S^B_{p,q}(f)=S^B_{p,q}(g)$ and $T^B_{p,q}(f)=T^B_{p,q}(g)$, there exists $h\in B_{p,q}$ such that $\widetilde{f}+\widetilde{g}=\widetilde{h}$.
	
	Moreover, for every $f\in B_{p,q}$ and $\lambda\in k$, there exists $g$ in $B_{p,q}$ such that $\lambda\widetilde{f}=\widetilde{g}$.
	(In particular, for $\lambda=0$, we see that the zero morphism is of the form $\widetilde{g}$.)
	
	\item \label{item: tensor compatible with projections} Tensor is compatible with projections: For $a\in X_p$ and $b\in X_q$ we write $a\otimes b$ or $a\otimes_{p,q} b$ for $\otimes_{p,q}(a,b)$.

	If $a_1,a_2\in X_p$ with $\pi_p(a_1)=\pi_p(a_2)$ and $b_1,b_2\in X_q$ with $\pi_q(b_1)=\pi_q(b_2)$, then $\pi_{pq}(a_1\otimes b_1)=\pi_{pq}( a_2\otimes b_2)$.
	
	\item Bilinearity of tensor product: If $a_1,a_2\in X_p$ with $\pi_p(a_1)=\pi_p(a_2)$ and $b\in X_q$, then $(a_1+a_2)\otimes b=a_1\otimes b+a_2\otimes b$. Moreover, for $\lambda\in k$ we have $\lambda a_1\otimes b=\lambda(a_1\otimes b)$. Similarly for left- and right-hand side interchanged.
	
	\item \label{item: Tensor product} Tensor product: 
	For $b\in B_p$ and $c\in B_q$, let $b\otimes c=\pi_{pq}(v\otimes w)\in B_{pq}$, where $v\in X_p(b)$ and $w\in X_q(c)$. (Note that, by axiom \ref{item: tensor compatible with projections}, $b\otimes c$ does not depend on the choice of $v$ and $w$.)
	
	The map  $X_p(b)\otimes_k X_q(c)\to X_{pq}(b\otimes c)$ induced by the bilinear map $\otimes_{p,q}\colon X_p(b)\times X_q(c)\to X_{pq}(b\otimes c)$ is an isomorphism.
We set $X_p(b)\otimes X_q(c)=X_{pq}(b\otimes c)$
	
	%
	
	\item \label{item: Functoriality of tensor product} Functoriality of tensor product: For $b_1\in B_{p_1}$, $b_2\in B_{p_2}$, $c_1\in B_{q_1}$, $c_2\in B_{q_2}$, $f\in B_{p_1,q_1}$ with $S^B_{p_1,q_1}(f)=b_1$ and $T^B_{p_1,q_1}(f)=c_1$, and $g\in B_{p_2,q_2}$ with $S^B_{p_2,q_2}(g)=b_2$ and $T^B_{p_2,q_2}(g)=c_2$, there exists $h\in B_{p_1p_1,q_1q_2}$ such that $$\widetilde{h}=\widetilde{f}\otimes\widetilde{g}\colon X_{p_1p_2}(b_1\otimes b_2)=X_{p_1}(b_1)\otimes_k X_{p_2}(b_2)\to X_{q_1}(c_1)\otimes_k X_{q_2}(c_2)=X_{q_1q_2}(c_1\otimes c_2).$$
%
%
%
	\item \label{item: Associativity of the tensor product} Associativity of the tensor product: For $b\in B_p$, $c\in B_q$ and $d\in B_r$, there exists $f\in B_{pqr,pqr}$ such that $\widetilde{f}\colon X_p(b)\otimes(X_q(c)\otimes X_r(d))\to (X_p(b)\otimes X_q(c))\otimes X_r(d)$ equals the map defined by $u\otimes (v \otimes w)\mapsto (u\otimes v)\otimes w$.
	
	\item \label{item: Commutativity of the tensor product} Commutativity of the tensor product: For $b\in B_p$ and $c\in B_q$, there exists $f\in B_{pq,pq}$ such that $\widetilde{f}\colon X_p(b)\otimes X_q(c)\to X_q(c)\otimes X_p(b)$ equals the map defined by $v\otimes w\mapsto w\otimes v$.
	
	\item  \label{item: Uniqueness of tensor factorization}
	Uniqueness of tensor factorization: If $p_1q_1=p_2q_2$, $b_1\in B_{p_1}$, $c_1\in B_{q_1}$, $b_2\in B_{p_2}$ and $c_2\in B_{q_2}$ are such that $b_1\otimes c_1=b_2\otimes c_2$, then $p_1=p_2$, $p_2=q_2$, $b_1=b_2$ and $c_1=c_2$.	
%
	
	\item \label{item: Existence of tensor factorization}
	
	Existence of tensor factorization: For every $b\in B_{(m,n)}$, there exist elements $b_1,\ldots,b_m$ with $b_i\in B_{(1,n_i)}$ for some $n_i$ with $n_1\ldots n_m=n$ and a complete parenthesization of the sequence $b_1,\ldots,b_m$ such that the corresponding tensor product of the sequence is equal to $b$.
	
	\item \label{item: tensor skeletal} Tensor skeletal: For $b\in B_{(m,n)}$, there exists a  $c\in B_{(1,n)}$ and $f\in B_{((m,n),(1,n))}$ such that $\widetilde{f}\colon X_{(m,n)}(b)\to X_{(1,n)}(c)$ is bijective.
	
	Moreover, if $b,c\in B_{(1,n)}$ are such that there exists an $f$ in $B_{(1,n),(1,n)}$ with $\widetilde{f}\colon X_{(1,n)}(b)\to X_{(1,n)}(c)$ bijective, then $b=c$.

	\item \label{item: Existence of the identity object} Existence of the identity object: Recall that $1$ is a constant in $B_{(1,1)}$. For every non-zero element $u_0$ of $\mathds{1}=X_{(1,1)}(1)$ and $b\in B_{(m,n)}$, there exists $f$ in $B_{((m,n),(m+1,n)}$ such that $\widetilde{f}\colon X_{(m,n)}(b)\to \mathds{1}\otimes X_{(m,n)}(b)$ is the map $v\mapsto u_0\otimes v$.
	\item \label{item: Existence of duals} Existence of duals: For every $b\in B_{(m,n)}$ there exists $b^\vee$ in $B_{(1,n)}$, $f\in B_{(m+1,n^2),(1,1)}$ and $g\in B_{(1,1),(m+1,n^2)}$ with $\widetilde{f}\colon V\otimes V^\vee\to \mathds{1}$ and $\widetilde{g}\colon \mathds{1}\to V^\vee\otimes V$, where $V=X_{(m,n)}(b)$ and $V^\vee=X_{(1,n)}(b^\vee)$, such that 
	the maps 
	$$
	V  \to V\otimes \mathds{1}\xrightarrow{V\otimes\widetilde{g}} V\otimes (V^\vee\otimes V)\to (V\otimes V^\vee)\otimes V\xrightarrow{\widetilde{g}\otimes V} \mathds{1}\otimes V\to V $$	
	$$ V^\vee  \to \mathds{1}\otimes V^\vee\xrightarrow{\widetilde{g}\otimes V^\vee}(V^\vee\otimes V)\otimes V^\vee\to V^\vee\otimes (V\otimes V^\vee)\xrightarrow{V^\vee\otimes\widetilde{f}}V^\vee\otimes\mathds{1}\to V^\vee
	$$
	are the identity maps.
	
	\item \label{item: Existence of direct sums} Existence of direct sums (biproducts): For $b\in B_{(m,n)}$ and $c\in B_{(m',n')}$, there exists $d\in B_{(1,n+n')}$, $P_b\in B_{((1,n+n'),(m,n))}$, $P_c\in B_{(1,n+n'),(m',n')}$, $I_b\in B_{(m,n),(1,n+n')}$ and $I_c\in B_{(m',n'),(1,n+n')}$ such that $\widetilde{I_b}\circ\widetilde{P_b}+\widetilde{I_c}\circ\widetilde{P_c}=\id_{X_{(1,n+n')}(d)}$, $\widetilde{P_b}\circ\widetilde{I_b}=\id_{X_{(m,n)}(b)}$, $\widetilde{P_c}\circ\widetilde{I_c}=\id_{X_{(m',n')}}$, $\widetilde{P_c}\circ\widetilde{I_b}=0$ and $\widetilde{P_b}\circ\widetilde{I_c}=0$.

	\item \label{item: Existence of kernels} Existence of kernels: For every $f$ in $B_{p,q}$ and $f'$ in $B_{r,q}$ with $\widetilde{f}\colon V\to W$ injective and $\widetilde{f'}\colon U\to W$ such that $\widetilde{f'}(U)\subseteq \widetilde{f}(V)$, there exists $f''\in B_{r,p}$ with $\widetilde{f''}\colon U\to V$ such that $\widetilde{f}\circ\widetilde{f''}=\widetilde{f'}$.
	
	Moreover, for every $f\in B_{p,q}$ with $\widetilde{f}\colon V\to W$ and $\dim(\ker(\widetilde{f}))=\ell\geq 1$, there exists $f'\in B_{(1,\ell),p}$ with $\widetilde{f'}\colon U\to V$ such that $\widetilde{f'}$ is injective and $\widetilde{f}\circ{\widetilde{f'}}=0$.

	\item \label{item: Existence of cokernels} Existence of cokernels: For $f\in B_{p,q}$ and $f'\in B_{p,r}$ with $\widetilde{f}\colon V\to W$ surjective and $\widetilde{f'}\colon V\to U$ such that $\ker(\widetilde{f})\subseteq\ker(\widetilde{f'})$, there exists $f''\in B_{q,r}$ with $\widetilde{f''}\colon W\to U$ such that $\widetilde{f''}\circ\widetilde{f}=\widetilde{f'}$. 
	
	Moreover, for every $f\in B_{p,q}$ with $\widetilde{f}\colon V\to W$ and $\dim(\im(\widetilde{f}))=\ell<n$, where $q=(m,n)$, there exists $\widetilde{f'}\in B_{(q,(1,n-\ell))}$, with $\widetilde{f'}\colon W\to U$, such that $\widetilde{f'}$ is surjective and $\widetilde{f'}\circ\widetilde{f}=0$.

	\item Linear independence: For $v_1,\ldots,v_n\in X_{p}$, we have $LI_{p}(v_1,\ldots,v_n)$, where $p=(m,n)$, if and only if $\pi_p(v_i)=\pi_p(v_j)$ for $1\leq i,j\leq n$ and $v_1,\ldots,v_n$ are linearly independent (in $X_p(b)$, where $b=\pi_p(v_i)$).

\end{enumerate}

\renewcommand{\labelenumi}{{\rm (\roman{enumi})}}

\begin{rem}
	The relations $LI_p$ are definable from the other symbols of the language. So the relation symbols $LI_p$ could in principle be omitted from the language. It is however convenient to work with the $LI_p$'s, because they imply that a homomorphism of models of $\operatorname{PROALG}$ has certain desirable properties. See Theorem \ref{theo: equivalence of categories} and its proof for details.
\end{rem}

\subsection{Equivalence of $\PROALG$ and $\TANN$}

Let $\M$ and $\M'$ be models of $\operatorname{PROALG}$. Recall that a homomorphism $h\colon \M\to \M'$ is a sequence of maps, one for each sort $s$, that maps the $\M$-universe of the $s$-sort to the $\M'$-universe of the $s$-sort, such that all constants, relations, and functions are preserved.

\begin{theo} \label{theo: equivalence of categories}
	The category of models of $\operatorname{PROALG}$ with the homomorphisms as morphisms is equivalent to the category $\TANN$.
\end{theo}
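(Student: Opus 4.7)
The plan is to exhibit functors $F\colon \mathrm{Mod}(\PROALG)\to \TANN$ and $G\colon \TANN\to \mathrm{Mod}(\PROALG)$ and verify that they are quasi-inverse to each other. Given a model $\M$ of $\PROALG$, I define $F(\M)=(k,C_\M,\omega_\M)$ as follows: the field $k$ is the interpretation of the field sort; the objects of $C_\M$ are the disjoint union $\bigsqcup_p B_p^\M$ together with one formally adjoined zero object $\boldsymbol{0}$; for $b\in B_p^\M, c\in B_q^\M$, the set $\Hom_{C_\M}(b,c)$ is $(S^B_{p,q},T^B_{p,q})^{-1}(b,c)$, with composition given by axiom \ref{item: Composition of morphisms} (well-defined thanks to the injectivity clause in axiom \ref{item: Morphisms}); the $k$-linear structure on Hom-sets is axiom \ref{item: Linearity}; the identity morphisms come from axiom \ref{item: Existence of the identity}; the tensor product on objects and morphisms is axioms \ref{item: Tensor product} and \ref{item: Functoriality of tensor product}; the constraints and identity object come from axioms \ref{item: Associativity of the tensor product}, \ref{item: Commutativity of the tensor product}, \ref{item: Existence of the identity object}; duals from axiom \ref{item: Existence of duals}; and the abelian structure from axioms \ref{item: Existence of direct sums}, \ref{item: Existence of kernels}, \ref{item: Existence of cokernels}. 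The pointed skeletal property follows from axioms \ref{item: Uniqueness of tensor factorization}, \ref{item: Existence of tensor factorization}, \ref{item: tensor skeletal}. The fibre functor $\omega_\M$ sends $b\in B_p^\M$ to the $k$-vector space $X_p(b)$ (structure supplied by axioms (3)--(6)), sends $\boldsymbol{0}$ to $0$, and sends a morphism $f$ to $\widetilde{f}$; exactness and $k$-linearity are built into the axioms.

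In the opposite direction, given $(k,C,\omega)\in\TANN$, I define a model $G(k,C,\omega)$ by taking the field sort to be $k$; the tensor irreducibles of $C$ of dimension $n$ under $\omega$ populate $B_{(1,n)}$, and for $m\geq 2$ the set $B_{(m,n)}$ consists of the non-zero objects of tensor length $m$ with $\dim_k\omega(V)=n$, which by Lemma~\ref{lemma: unique tensor factorization} correspond to completely parenthesized tensor products of tensor irreducibles. Set $X_p$ to be the disjoint union $\bigsqcup_{b\in B_p}\omega(V_b)$, with $\pi_p$ the canonical projection; let $B_{p,q}=\bigsqcup_{b\in B_p,c\in B_q}\Hom_C(V_b,V_c)$ and $X_{p,q}(f)$ be the graph of $\omega(f)$; interpret all constants, relations, and functions in the evident way using the $k$-vector space structure on $\omega(V_b)$, the composition in $C$, and the tensor and fibre functor. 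Checking that each axiom of $\PROALG$ holds in $G(k,C,\omega)$ is a routine translation, using the tannakian axioms and pointed skeletality.

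For functoriality, a homomorphism $h\colon\M\to\M'$ restricts to a field homomorphism $\lambda=h|_k$, and on each $B_p$ and $B_{p,q}$ it gives maps compatible with all the categorical structure, hence defines a strict tensor functor $T\colon C_\M\to C_{\M'}$ that preserves tensor irreducibles (since $B_{(1,n)}$ is preserved). The crucial step is constructing $\alpha_V\colon \omega_\M(V)\otimes_k k'\to\omega_{\M'}(T(V))$: the restriction of $h$ to $X_p(b)\to X_{p}(h(b))$ is $\lambda$-semilinear (preservation of $A_p$ and $SM_p$) and maps a basis to a basis, because it sends $LI_p$-tuples to $LI_p$-tuples and both spaces have the same dimension $n$ by axiom (6); by the universal property of base change we get a $k'$-linear isomorphism $\alpha_b$, and these assemble (using preservation of $\otimes$) into the tensor isomorphism $\alpha$. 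Going the other way, a morphism $(\lambda,T,\alpha)$ in $\TANN$ induces a homomorphism of models in the evident fashion, using $\alpha$ to transport elements of $X_p(b)$ through $\omega(V_b)\hookrightarrow \omega(V_b)\otimes_k k'\xrightarrow{\alpha_{V_b}}\omega(T(V_b))$.

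Finally, $F$ and $G$ are quasi-inverse: the composition $FG$ is equal on the nose to the identity of $\TANN$ up to a canonical isomorphism (the tensor irreducibles of $C$ provide a concrete skeleton, matching the constructions in Corollary \ref{cor: exists pointed skeleton} and Proposition \ref{prop: existence of tensor skeleton}), while $GF(\M)$ is isomorphic to $\M$ via the identity on sorts. The main obstacle is the verification in the previous paragraph that homomorphisms of models, through the linear independence relation $LI_p$ combined with the dimension axiom, are forced to be $\lambda$-semilinear isomorphisms on each fibre, so that the data encoded in a model-theoretic homomorphism is exactly a triple $(\lambda,T,\alpha)$; this is precisely where the relation symbols $LI_p$ are essential and is what ensures the equivalence is not merely a functor but actually inverts on morphisms up to isomorphism.
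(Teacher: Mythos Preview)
Your proposal is correct and follows essentially the same constructions as the paper: the functor $F$ you build coincides with the paper's $\M\rightsquigarrow (k(\M),C(\M),\omega(\M))$, and your $G$ on objects is exactly the paper's essential surjectivity argument. The only organizational difference is that the paper proves the single functor $F$ is full, faithful, and essentially surjective rather than exhibiting an explicit quasi-inverse, which spares one the (routine) naturality checks you leave implicit; the key verification---that preservation of $LI_p$ forces $h^X_p|_{X_p(b)}$ to extend to a $k'$-linear isomorphism $\alpha_b$---is identical in both.
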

\begin{proof}
	Let $\mathcal{M}=(k,B_p,X_p,B_{p,q},X_{p,q}) $ be an object of $\operatorname{PROALG}$. We will associate an object $(k(\M), C(\mathcal{M}),\omega(\mathcal{M}))$ of $\TANN$ to $\mathcal{M}$. We set $k(\M)=k$ (including the field structure) and we define a category  $D=D(\mathcal{M})$ as follows: The set of objects of $D$ is the disjoint union of all $B_p$'s. For $b\in B_p$ and $c\in B_q$, the set of morphisms from $b$ to $c$ is defined as $$\Hom(b,c)=\{f\in B_{p,q}|\ S^B_{p,q}(f)=b,\ T^B_{p,q}(f)=c\}.$$ To define the composition
	$g\circ f$ for $f\in\Hom(b,c)$ and $g\in\Hom(c,d)$ we us axioms \ref{item: Morphisms} and \ref{item: Composition of morphisms}: We define $g\circ f$ as the unique element of $B_{p,r}$ such that $\widetilde{f\circ g}=\widetilde{g}\circ\widetilde{f}$. As the composition of $k$-linear maps is associative, it follows that our composition is also associative. For $b\in B_p$ we define the identity $\id_b\in\Hom(b,b)$ to be the unique element of $\Hom(b,b)$ with $\widetilde{\id_b}=\id_{X_p(b)}$ (axiom \ref{item: Existence of the identity}). It is then clear that $D$ is a category.
	
	We define a tensor product $\otimes\colon  D\times D\to D$ as follows: On objects, say $b\in B_p$ and $c\in B_q$, we define $b\otimes c\in B_{pq}$ as in axiom \ref{item: Tensor product}. For morphisms $f\colon b_1\to c_1$ and $g\colon b_2\to c_2$ 
	we define $f\otimes g\colon b_1\otimes b_2\to c_1\otimes c_2$ as the unique element of $\Hom(b_1\otimes b_2,c_1\otimes c_2)$ with $\widetilde{f\otimes g}=\widetilde{f}\otimes\widetilde{g}$ (axiom \ref{item: Functoriality of tensor product}). It is then clear that $\otimes$ is a functor.
	
	We define an associativity constraint $\Phi$ with components $\Phi_{b,c,d}\colon b\otimes (c\otimes d)\to (b\otimes c)\otimes d$ such that $\widetilde{\Phi_{b,c,d}}$ corresponds to the usual associativity constraint in $\vec_k$ (axiom \ref{item: Associativity of the tensor product}). Similarly, we define a commutativity constraint $\Psi$ using axiom \ref{item: Commutativity of the tensor product}. The required diagrams for $\Phi$ and $\Psi$ commute because the corresponding diagrams commute in $\vec_k$.
	 
	Let $u_0$ be any non-zero element of $\mathds{1}$ (see axiom \ref{item: Existence of the identity object}) and let $u$ be the unique element of $\Hom(1,1\otimes 1)$ such that $\widetilde{u}\colon \mathds{1}\to \mathds{1}\otimes\mathds{1},\ u_0\mapsto u_0\otimes u_0$.
	 We claim that $(1,u)$ is an identity object for $(D,\otimes)$. Note that by axiom \ref{item: Existence of kernels} every $f\in B_{p,q}$ such that $\widetilde{f}$ is bijective is an isomorphism. So it follows from axiom \ref{item: Existence of the identity object} that $b\rightsquigarrow \mathds{1}\otimes b$ is an equivalence of categories.
	%
	Thus $D$ is a tensor category. Axioms \ref{item: Uniqueness of tensor factorization}, \ref{item: Existence of tensor factorization} and \ref{item: tensor skeletal} imply that $D$ is tensor skeletal. The tensor irreducible objects are those belonging to some $B_{(1,n)}$.
	
	Note that the category $D$ does not have a zero object, because all the $X_p(b)'$s are vector spaces of dimension greater or equal to $n\geq 1$. We now add a zero object $\bold{0}$ to $D$ to form a category $C=C(\M)$. This is done in a similar fashion as in the proof of Corollary \ref{cor: exists pointed skeleton}. So the objects of $C$ are the disjoint union of the objects of $D$ with $\bold{0}$. The morphisms between two objects in $C$ that both belong to $D$ are the same as the morphisms in $C$.
	For an object $b$ of $D$ there is a unique morphism $0\colon \bold{0}\to b$ in $C$. Similarly, there is a unique morphism $0\colon b\to\bold{0}$. Composition of morphisms in $C$ is defined in the obvious way. For example, the composition $b\to\bold{0}\to c$ is the unique $f\in\Hom(b,c)$ such that $\widetilde{f}$ is the zero map (cf. axiom \ref{item: Linearity}).
	
	 For consistence reasons we extend some of our notation to include zero: We define $X(\bold{0})$ to be the zero vector space (over $k$) and we set $\widetilde{0}$ to be the zero map.
		
	As in the proof of Corollary \ref{cor: exists pointed skeleton} we extend $\otimes\colon D\times D\to D$ to a functor $\otimes\colon C\times C\to C$ in the only meaningful way. The associativity and commutativity constraints on $D$ extend trivially to associativity and commutativity constraints on $C$. So $(C,\otimes)$ is a tensor category. It follows from axiom \ref{item: Existence of duals} that $(C,\otimes)$ is rigid.
	
	Let us next show that $C$ is an abelian category. Clearly $C$ has a zero object, namely $\bold{0}$. By axiom \ref{item: Existence of direct sums} the category $C$ has biproducts. It follows from axioms	
	\ref{item: Existence of kernels} and \ref{item:  Existence of cokernels} that $C$ has kernels and cokernels.
	Let $f\colon b\to c$ be a monomorphism in $C$. We claim that $\widetilde{f}\colon X_p(b)\to X_q(c)$ is injective. For a kernel $g\colon a\to b$ of $f$ we have $fg=0=f0$ and therefore $g=0$. Since the image of $\widetilde{g}$ is the kernel of $\widetilde{f}$ we see that $\widetilde{f}$ is injective. It then follows from axiom \ref{item: Existence of cokernels} that $f$ is the kernel of its cokernel. So $f$ is normal. Similarly, we see that also every epimorphism in $C$ is normal. Thus $C$ is an abelian category.
	
	For $f\in \Hom(b,c)$ and $\lambda\in k$ we define $\lambda f$ as the unique element of $\Hom(b,c)$ such that $\widetilde{\lambda f}=\lambda\widetilde{f}$ (axiom \ref{item: Linearity}). Thus $C$ becomes a $k$-linear category.

	%

	We now define the fibre functor $\omega=\omega(\mathcal{M})$ from $C$ to $\vec_k$: For $b\in B_p$ we set $\omega(b)=X_p(b)$. For a morphism 
$f\colon b\to c$ in $C$ we set $\omega(f)=\widetilde{f}$. By construction $f$ is an exact $k$-linear functor.	The isomorphism $\gamma\colon \omega(-)\otimes_k\omega(-)\to \omega(-\otimes -)$ of functors, with components
	$$\gamma_{b,c}\colon \omega(b)\otimes_k\omega(c)=X_p(b)\otimes_k X_p(c)\to X_{pq}(b\otimes c)=\omega(b\otimes c),\ v\otimes w\to v\otimes_{p,q} w$$ turns $\omega$ into a tensor functor (cf. axiom \ref{item: Tensor product}). Thus $C$ is a neutral tannakian category over $k$. Since $(D,\otimes)$ is tensor skeletal, we see that, as desired, $C$ is a pointed skeletal neutral tannakian category over $k$. Thus $(k(\M),C(\mathcal{M}),\omega(\mathcal{M}))$ is an object of $\TANN$.
	
	
	\medskip
	
	Let $h\colon\mathcal{M}\to\mathcal{M}'$ be a homomorphism of models of $\operatorname{PROALG}$. Where $\mathcal{M}=(k,B_p,X_p,B_{p,q},X_{p,q})$, $\mathcal{M}'=(k',B'_p,X'_p,B'_{p,q},X'_{p,q})$ and $h=(h_{\text{field}}, h_p^B, h_p^X, h_{p,q}^B, h_{p,q}^X)$

	We claim that $h$ induces a morphism $(\lambda(h),T(h),\alpha(h))\colon (k, C, \omega)\to (k', C', \omega')$ in $\TANN$. We set $\lambda(h)=h_{\text{field}}$ and consider $k'$ as a field extension of $k$ via $\lambda(h)$.
	
	 We define the functor $T=T(h)\colon C\to C'$ through the action of $h$ on the base sorts: We set $T(b)=h^B_p(b)$ for $b\in B_p$ and $T(f)=h^B_{p,q}(f)$ for $f\in B_{p,q}$. We also set $T(\bold{0})=\bold{0}'$ and $T(0)=0$. The commutativity of
	 $$
	 \xymatrix{
	 B_p \ar_{h_p}[d] & B_{p,q} \ar_{S^B_{p,q}}[l] \ar^{T^B_{p,q}}[r] \ar^{h_{p,q}}[d] & B_q \ar^{h_q}[d] \\
	 B'_p & B'_{p,q} \ar_{S^B_{p,q}}[l] \ar^{T^B_{p,q}}[r] & B'_q 
	}
	 $$
	 shows that $T$ maps $\Hom(b,c)$ into $\Hom(T(b),T(c))$. The commutativity of
	 $$
	 \xymatrix{
	X_p \ar^{h_p^X}[r] \ar_{\pi_p}[d] & X_p' \ar^{\pi_p}[d]  \\
	B_p \ar^{h_p^B}[r] & B_p' 	
	 }
	$$	 
	 shows that $h_p^X$ maps $X_p(b)$ into $X'_p(T(b))$. Similarly, the commutativity of 	
	  $$
	 \xymatrix{
	 	X_{p,q} \ar^-{h_{p,q}^X}[r] \ar_{\pi_{p,q}}[d] & X_{p,q}' \ar^{\pi_{p,q}}[d]  \\
	 	B_{p,q} \ar^-{h_{p,q}^B}[r] & B_{p,q}' 	
	 }
	 $$	 
	 shows that $h^X_{p,q}$ maps $X_{p,q}(f)$ into $X_{p,q}(T(f))$.	 
	 For a morphism $f\colon b\to c$ in $C$ the left, right, upper and other squares in 
	 $$
	 \xymatrix{
	 	X_{p,q}(f) \ar@/_3pc/_{T^X_{p,q}}[dd] \ar^-{h^X_{p,q}}[r] \ar_{S^X_{p,q}}[d] & X'_{p,q}(T(f))  \ar^{S^X_{p,q}}[d]  \ar@/^3pc/^{T^X_{p,q}}[dd] \\ 
	 	X_p(b) \ar^-{h^X_p}[r] \ar_{\widetilde{f}}[d] & X'_p(T(b)) \ar^{\widetilde{T(f)}}[d] \\
	 	X_q(c) \ar^-{h^X_q}[r] & X'_q(T(c))  		
	 }
	 $$
	 commute. Thus also the lower square 
	  \begin{equation} \label{eqn: diag commutes with C(f)}
	 \xymatrix{
	 	X_p(b) \ar^-{h^X_p}[r] \ar_{\widetilde{f}}[d] & X'_p(T(b)) \ar^-{\widetilde{T(f)}}[d] \\
	 	X_q(c) \ar^-{h^X_q}[r] & X_q'(T(c))		
	 }
	 \end{equation}
	 commutes.
	 Since $h$ preserves $A_p$ and $SM_p$ it follows that the maps 
	 	$h_p^X\colon X_p(b)\to  X'_p(T(b))$ are $k$-linear. Furthermore, since $h$ preserves $LI_p$ and $X_p(b)$ and $X'_p(T(b))$ have the same dimension, we see that the induced map $\alpha_{b}\colon X_p(b)\otimes_k k'\to X'_p(T(b))$ is an isomorphism of $k'$-vector spaces. Diagram (\ref{eqn: diag commutes with C(f)}) extends to 
	 	\begin{equation} \label{eqn: diag for alpha}
	 	\xymatrix{
	 		X_p(b)\otimes_k k' \ar^-{\alpha_b}[r] \ar_{\widetilde{f}\otimes k'}[d] & X'_p(T(b)) \ar^-{\widetilde{T(f)}}[d] \\
	 		X_q(c)\otimes_k k' \ar^-{\alpha_{c}}[r] & X_q'(T(c))		
	 	}
	 	\end{equation}
	 	a diagram of $k'$-linear maps.

	 This implies that $T(\id_b)=\id_{T(b)}$. Moreover,
	 for morphisms $f\colon b\to c$ and $g\colon c\to d$ in $C$, the commutativity of the diagram
	 $$
	 \xymatrix{
	 X_p(b) \ar@/^2pc/^{\widetilde{gf}}[rr] \ar^{\widetilde{f}}[r] \ar_{h^X_p}[d] & X_q(c) \ar^{\widetilde{g}}[r] \ar^{h^X_q}[d] & X_r(d)  \ar^{h^X_r}[d] \\	
	 X'_p(b) \ar@/_2pc/_{\widetilde{T(g)T(f)}}[rr]  \ar^{\widetilde{T(f)}}[r] & X'_q(c) \ar^{\widetilde{T(g)}}[r] & X'_r(d) 
	 }
	 $$
	 shows that $T(gf)=T(g)T(f)$. Thus $T$ is a functor. We claim that $T$ is a strict tensor functor.
	 
	 Since the diagram	  
	 \begin{equation} \label{eqn: tensor and C(f)}
	 \xymatrix{
	 	X_p(b)\times X_q(c) \ar_{h^X_p\times h^X_q}[d]  \ar^-{\otimes_{p,q}}[r] & X_{pq}(b\otimes c) \ar^{h^X_{pq}}[d] \\
	 	X'_p(T(b))\times X'_q(T(c)) \ar^-{\otimes_{p,q}}[r] & X'_{pq}(T(b)\otimes T(c)) 	
	 }
	 \end{equation}
	 commutes, we see that $T(b\otimes c)=T(b)\otimes T(c)$. Note that the above diagram can also be expressed as $h^X_p\otimes h^X_q=h^X_{pq}$.
The diagram
$$
\xymatrix{
	X(b\otimes(c\otimes d)) \ar^{\widetilde{\Phi_{b,c,d}}}[r] \ar@{=}[d] & X((b\otimes c)\otimes d) \ar@{=}[d] \\
X(b)\otimes (X(c)\otimes X(d)) \ar[r] \ar_{h^X\otimes (h^X\otimes h^X)}[d] & 	(X(b)\otimes X(c))\otimes X(d) \ar^{(h^X\otimes h^X)\otimes h^X}[d] \\
X'(T(b))\otimes (X'(T(c))\otimes X'(T(d))) \ar[r] \ar@{=}[d] & 	(X'(T(b))\otimes X'(T(c)))\otimes X'(T(d)) \ar@{=}[d] \\
X'(T(b\otimes (c\otimes d))) \ar^{\widetilde{\Phi'_{T(b),T(c),T(d)}}}[r] & X'(T((b\otimes c)\otimes d)))
}
$$	 
commutes, where for simplicity we have omitted the $p,q,r$ indices. By (\ref{eqn: tensor and C(f)}) the map from the upper left to the lower right corner is $h^X$. Similarly, the map from the upper right corner to the lower right corner is $h^X$. It thus follows that $T(\Phi_{b,c,d})=\Phi'_{T(b),T(c),T(d)}$. In a similar fashion one shows that $T(\Psi_{b,c})=\Psi'_{T(b),T(c)}$.
	 Since $h^B_{(1,1)}(1)=1'$ we have $T(1)=1'$ and so $T$ is a strict tensor functor. From diagram (\ref{eqn: diagram for definition of alpha}) it follows that $T$ is $k$-linear. Clearly $T$ preserves tensor irreducible objects. In summary, as desired, $T$ is a $k$-linear strict tensor functor that preserves tensor irreducible objects.
	 	
	To obtain a morphism in $\TANN$, we also need to specify an isomorphism $\alpha=\alpha(h)\colon \omega_{k'}\to\omega'T$ of tensor functors. But the collection of all $\alpha_b\colon X_p(b)\otimes_k k'\to X'_p(T(b))$ defined above exactly yields such an isomorphism: The commutativity of (\ref{eqn: diag for alpha}) shows that $\alpha$ is a morphism of functors, whilst the commutativity of (\ref{eqn: tensor and C(f)}) implies that $\alpha$ is an isomorphism of tensor functors. So we indeed have a functor from $\operatorname{PROALG}$ to $\TANN$.

	We next show that the functor $\M\rightsquigarrow (k(\M),C(\M),\omega(\M))$ is faithful. Let $h,g\colon \M\to \M'$ be homomorphisms such that $(\lambda(h),T(h),\alpha(h))=(\lambda(g),T(g),\alpha(g))$. Then $h_{\text{field}}=\lambda(f)=\lambda(g)=g_{\text{field}}$. Moreover, $h^B_p=g^B_p$ and $h^B_{p,q}=g^B_{p,q}$ for all $p,q$ since $T(h)=T(g)$. 
	
	On $X_p(b)$, $h^X_p$ agrees with $\alpha(h)_b$ and $g^X_p$ agrees with $\alpha(g)_b$. Thus $h^X_p=g^X_p$. To show that also $h^X_{p,q}=g^X_{p,q}$, consider $f\in B_{(p,q)}$.
	The vertical maps in the diagram
	$$\xymatrix{
		X_{p,q}(f) \ar@{..>}[r] \ar_{S^X_{p,q}}[d] & X'_{p,q}(T(f)) \ar^{S^X_{p,q}}[d] \\
		X_p(b) \ar^-{h^X_p=g^X_p}[r] & X'_p(T(b))	 	
	}$$
	are bijective. Thus there exists a unique map $	X_{p,q}(f)\to  X'_{p,q}(T(f))$ that makes this diagram commutative. As the restrictions to $X_{p,q}(f)$ of both, $h^X_{p,q}$ and $g^X_{p,q}$ indeed make this diagram commutative, we see that $h^X_{p,q}=g^X_{p,q}$. 
	
	To show that the functor $\M\rightsquigarrow (k(\M),C(\M),\omega(\M))$ is full, consider a morphism $$(\lambda,T,\alpha)\colon (k(\M),C(\M),\omega(\M))\to (k(\M'),C(\M'),\omega(\M'))$$ in $\TANN$. We have to construct a homomorphism $h\colon\M\to\M'$ that induces $(\lambda,T,\alpha)$. We set $h_{\text{field}}=\lambda\colon k\to k'$.
	
	Note that for $b\in B_p=B_{(m,n)}$, the $k$-vector space $\omega(b)=X_{(m,n)}(b)$ has dimension $n$. Since $\alpha_b\colon X_p(b)\otimes_k k'\to  \omega'(T(b))$ is an isomorphism of $k'$-vector spaces we see that $\omega'(T(b))$ also has dimension $n$. Moreover, as $T$ preserves tensor irreducible objects, we see that $T(b)$ has tensor length $m$. So $T(b)\in B'_p$. Thus $T$ induces maps $h^B_p\colon B_p\to B'_p$. Since $B_{p,q}$ is the set of all morphism in $C$ with source in $B_p$ and target in $B_q$, it then also follows that $T$ induces maps $h_{p,q}^B\colon B_{p,q}\to B'_{p,q},\ f\mapsto T(f)$. 

	We define $h^X_p\colon X_p\to X'_p$ by $h^X_p(v)=\alpha_b(v\otimes 1)$, where $b=\pi_p(v)$. To define $h^X_{p,q}$, consider $f\in B_{p,q}$ with $f\colon b\to c$. We define $h^X_{p,q}\colon X_{p,q}\to X'_{p,q}$ to be the unique map whose restriction to any $X_{p,q}(f)$  makes
	\begin{equation} \label{eqn: diag h}
	\xymatrix{
		X_{p,q}(f) \ar@{..>}[r] \ar_{S^X_{p,q}}[d] & X_{p,q}(T(f)) \ar^{S^X_{p,q}}[d] \\
		X_p(b) \ar^{h^X_p}[r] & X'_p(T(b))	 	
	}
\end{equation}
	commutative. We need to check that $h=(h_{\text{field}},h^B_p,h^X_p,h^B_{p,q},h^X_{p,q})$ is a homomorphism. Clearly $f_{\text{field}}$ preserves $+$, $\cdot$, $0$ and $1$. Since the
	$\alpha_b$ are $k'$-linear isomorphisms, $0_p$, $A_p$, $SM_p$ and $LI_p$ are preserved by $h$.

	We note that $C=C(\M)$ has several identity objects. 
	However, since $C$ is pointed skeletal there exists a unique tensor irreducible object $1$ of $C$ such that $(1,u)$ is an identity object, for some isomorphism $u\colon 1\to 1\otimes 1$; similarly for $C'=C(\M')$. Since $T$ preserves identity objects and tensor irreducible objects, we see that $T(1)=1'$, i.e., $h$ preserves $1$.
	
	Since $T$ is a functor, $h$ preserves $S^B_{p,q}$ and $T^B_{p,q}$.
As $\alpha_b\colon X_p(b)\otimes_k k'\to X'_p(T(b))$ we see that $h$ preserves $\pi_p$. Using diagram (\ref{eqn: diag h}) we see that $h$ also preserves $\pi_{p,q}$. Diagram (\ref{eqn: diag h}) shows that $h$ preserves $S^X_{p,q}$.
For a morphism $f\colon b\to c$ in $C$, the
	diagram 
	$$
	\xymatrix{
		X_{p,q}(f) \ar@/^4pc/^{h^X_{p,q}}[rrr]  \ar_-{T^X_{p,q}\otimes 1}[rd] \ar^-{S^X_{p,q}\otimes 1}[r] & X_p(b)\otimes_k k' \ar_-{\widetilde{f}\otimes k'}[d]   \ar^{\alpha_b}[r] & X'_p(T(b)) \ar^-{\widetilde{T(f)}}[d] & \ar_-{S^X_{p,q}}[l] X_{p,q}(T(f)) \ar^-{T^X_{p,q}}[ld] \\	
		& X_q(c)\otimes_k k' \ar^-{\alpha_c}[r] & X'_q(T(c))
	}
	$$
	shows that $f$ also preserves $T^X_{p,q}$.
	As $T$ is strict and $\alpha$ an isomorphism of tensor functors, we have for equivalence classes $b\in B_p$ and $c\in B_q$ a commutative diagram: 
	$$
	\xymatrix{
		(X_p(b)\otimes X_q(c))\otimes_k k' \ar^-{\alpha_{b\otimes c}}[r] \ar_{\simeq}[d] & X'_{pq}(T(b\otimes c)) \ar@{=}[d] \\
		(X_p(b)\otimes_k k')\otimes_{k'}(X_q(c)\otimes_k k') \ar^-{\alpha_b\otimes\alpha_c}[r] & X'_p(T(b))\otimes X'_q(T(c))	
	}
	$$
	
	Thus for $v\in X_p(b)$ and $w\in X_q(c)$ we obtain $h^X_{pq}(v\otimes w)=\alpha_{b\otimes c}(v\otimes w\otimes 1)=\alpha_b(v\otimes 1)\otimes\alpha_c(w\otimes1)=h^X_p(v)\otimes h^X_q(w)$. Thus $h$ also preserves $\otimes_{p,q}$. Therefore $f\colon \M\to \M'$ is a homomorphism of models of PROALG. It is then clear that $h$ induces the morphism $(\lambda,T,\alpha)$ in $\TANN$.
	
	\medskip
	
	Finally, we show that the functor $\M\rightsquigarrow (k(\M),C(\M),\omega(\M))$ is essentially surjective.
	Let $(k,C,\omega)$ be an object of $\TANN$. We will construct a model $\M$ of $\operatorname{PROALG}$ such that $(k(\M),C(\M),\omega(\M))$ is isomorphic to $(k,C,\omega)$ in $\TANN$. We define the field sort of $\M$ to be $k$ (including the field structure). For $p=(m,n)$, with $m,n\geq 1$, let $B_p$ denote the set of all objects $b$ of $C$ of tensor length $m$ and such that $\omega(b)$ has dimension $n$.
	Let $B_{p,q}$ denote the set of all morphisms in $C$ from objects in $B_p$ to objects in $B_q$. Let $S^B_{p,q}\colon B_{p,q}\to B_p$ denote the map that assigns the source to a morphism and similarly for the target. Let $X_p$ denote the disjoint union of the $k$-vector spaces $\omega(b)$, $b\in B_p$ and let $\pi_p\colon X_p\to B_p$ be the map such that $\pi_p(v)=b$ for $v\in\omega(b)$. We define $X_{p,q}$ as the disjoint union of the graphs of the $k$-linear maps $\omega(f)\colon \omega(b)\to\omega(c)$, where $f\colon b\to c$ is a morphism in $C$ with $b\in B_p$ and $c\in B_q$.	The maps $\pi_{p,q}\colon X_{p,q}\to B_{p,q}$ are defined by $\pi_{p,q}(a)=f$, if $a$ belongs to the graph of $\omega(f)$. The maps $S^X_{p,q}\colon X_{p,q}\to X_p$ and $T^X_{p,q}\colon X_{p,q}\to X_q$ are defined by	$S^X_{p,q}((v,w))=v$ and $T^X_{p,q}((v,w))=w$, where $(v,w)=(v,\omega(f)(v))$ is an element of the graph $\{(v,\omega(f)(v))|\ v\in X_p(b)\}$ of $\omega(f)$, for a morphism$f\colon b\to c$ in $B_{p,q}$.
	We define $0_p$ to be the subset of $X_p$ consisting of all zero vectors of all vector spaces in $X_p$. 	
	We define $A_p$ through
	$A_p(v_1,v_2,v_3)$ if and only if all three elements $v_1,v_2,v_3$ of $X_p$ belong to the same vector space $\omega(b)$ and $v_3=v_1+v_2$, where the $+$ here is vector space addition in $\omega(b)$. 
	The map $SM_p\colon k\times X_p\to X_p$ is scalar multiplication.
	 For elements $v_1,\ldots,v_n$ of $X_p$, where $p=(m,n)$, we define $LI_p$ through $LI_p(v_1,\ldots,v_n)$ if and only if all $\pi_p(a_i)=\pi_p(a_j)$ for all $i,j$ and $v_1,\ldots,v_n$ are $k$-linearly independent (in $X_p(b)$, where $b=\pi_p(a_i)$). 
	 
	 There exists a unique tensor irreducible object $\mathds{1}_C$ in $C$ such that $(\mathds{1}_C,u)$ is an identity object of $C$ for some isomorphism $u\colon \mathds{1}_C\to \mathds{1}_C\otimes\mathds{1}_C$. We set $1=\mathds{1}_C\in B_{(1,1)}$.
	
Finally, we define $\otimes_{p,q}\colon X_p\times X_q\to X_{pq}$ by sending $(v,w)\in\omega(b)\times\omega(c)$ to the image of $v\otimes w$ under $\omega(b)\otimes_k\omega(c)\to \omega(b\otimes c)$, where the latter map is part of the functorial isomorphism defining the tensor functor $\omega$. (As $C$ is pointed skeletal, $\otimes_{p,q}$ is well defined.)
	 It is now straight forward to check that our structure $\M$ satisfies all 27 axioms of PROALG. Moreover, $(k(\M),C(\M),\omega(\M))=(k,C,\omega)$. So the isomorphism $(\lambda,T,\alpha)$ can be chosen to be the identity.
\end{proof}

For a model $\M$ of PROALG we define $(k,C,\omega)=(k(\M),C(\M),\omega(\M))$ as in the proof of Theorem \ref{theo: equivalence of categories} and we call $(k,C,\omega)$ the object of $\TANN$ associated to $\M$.
 In the sequel, when given a model $\M=(k,B_p,X_p,B_{p,q},X_{p,q})$ of $\PROALG$ we will use this notation without further ado. For example, for $b\in B_p$ we will usually write $\omega(b)$ instead of $X_p(b)$.

  We also set $$G=G(\M)=\autt(\omega(\M)).$$ 
Combining Theorem \ref{theo: equivalence of categories} and Proposition \ref{prop: from TANN to PROALGEBRAIC GROUPS} we obtain:

\begin{cor} \label{cor: functor from PROALG to PRO}
	The functor $\M\rightsquigarrow G(\M)$ from the category of models of $\operatorname{PROALG}$ with homomorphism as the morphisms to the category $\PRO$ is full, essentially surjective and induces a bijection on the isomorphism classes. \qed
\end{cor}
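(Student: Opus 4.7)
The plan is to deduce this corollary directly from Theorem \ref{theo: equivalence of categories} and Proposition \ref{prop: from TANN to PROALGEBRAIC GROUPS} by observing that the functor $\M \rightsquigarrow G(\M)$ factors as the composition
$$ \M \rightsquigarrow (k(\M), C(\M), \omega(\M)) \rightsquigarrow (k(\M), \autt(\omega(\M))) = (k(\M), G(\M)), $$
where the first arrow is an equivalence of categories by Theorem \ref{theo: equivalence of categories} and the second has the three desired properties by Proposition \ref{prop: from TANN to PROALGEBRAIC GROUPS}. Thus the entire argument reduces to the elementary observation that the three properties of being full, essentially surjective, and inducing a bijection on isomorphism classes are all preserved under pre-composition with an equivalence of categories.

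To make this explicit: let $F\colon \mathcal{A} \to \mathcal{B}$ be an equivalence and $H\colon \mathcal{B} \to \mathcal{C}$ a functor. For fullness, given a morphism $\psi\colon HF(A_1) \to HF(A_2)$ in $\mathcal{C}$, fullness of $H$ produces a morphism $\varphi\colon F(A_1) \to F(A_2)$ in $\mathcal{B}$ with $H(\varphi) = \psi$, and then fullness of $F$ (which holds since $F$ is an equivalence) produces $h\colon A_1 \to A_2$ with $F(h) = \varphi$, whence $HF(h) = \psi$. For essential surjectivity, given $C$ in $\mathcal{C}$ one finds $B$ in $\mathcal{B}$ with $H(B) \simeq C$ and then $A$ in $\mathcal{A}$ with $F(A) \simeq B$, giving $HF(A) \simeq C$. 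For the bijection on isomorphism classes, note that an equivalence induces a bijection on isomorphism classes, and the composition of two such bijections is a bijection; more concretely, if $HF(A_1) \simeq HF(A_2)$ then $F(A_1) \simeq F(A_2)$ by the bijection for $H$, and then $A_1 \simeq A_2$ because $F$ is fully faithful.

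Applying this with $F$ the functor from Theorem \ref{theo: equivalence of categories} and $H$ the functor from Proposition \ref{prop: from TANN to PROALGEBRAIC GROUPS} immediately yields the corollary. There is no real obstacle here; the only point requiring any care is to check that the composition $\M \rightsquigarrow (k(\M), C(\M), \omega(\M)) \rightsquigarrow (k(\M), G(\M))$ indeed coincides (up to the identifications built in to the proof of Theorem \ref{theo: equivalence of categories}) with the functor $\M \rightsquigarrow G(\M) = \autt(\omega(\M))$ described in the statement of the corollary, which is immediate from the definition $G(\M) = \autt(\omega(\M))$ given just before the corollary.
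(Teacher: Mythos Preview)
Your proof is correct and takes essentially the same approach as the paper, which simply states that the corollary follows by combining Theorem~\ref{theo: equivalence of categories} and Proposition~\ref{prop: from TANN to PROALGEBRAIC GROUPS} and marks it with \qed. You have merely spelled out the elementary categorical fact that fullness, essential surjectivity, and bijectivity on isomorphism classes are preserved under precomposition with an equivalence, which is exactly what the paper leaves implicit.
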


\begin{ex} We describe the object $(k,C,\omega)$ of $\TANN$ that corresponds to the trivial proalgebraic group.
	The objects of $C$ are the zero object $\mathbf{0}$ together with all completely parenthesized finite sequences of integers greater or equal to $1$. For an object $b$ of $C$ corresponding to a complete parenthesization of the sequence $(n_1,\ldots,n_m)$ we set $\omega(b)=k^{n_1}\otimes_k\ldots\otimes_k k^{n_m}$. We also define $\omega(\mathbf{0})$ to be the zero vector space. For objects $b_1,b_2$ of $C$ the set of morphisms $\Hom(b_1,b_2)$ is defined as the set of $k$-linear maps from $\omega(b_1)$ to $\omega(b_2)$. The tensor product $\otimes\colon C\times C\to C$ is defined on non-zero objects as the concatenation of parenthesized sequences. We also set $b\otimes\mathbf{0}=\mathbf{0}\otimes b=\mathbf{0}$ for any object $b$ of $C$. On morphisms $\otimes$ is defines as the usual tensor product of $k$-linear maps.
\end{ex}

Let $G$ be a proalgebraic group. There does not seem to be a \emph{canonical} way to construct from $G$ a model $\M$ of $\PROALG$ such that $G(\M)\simeq G$. However, according to Corollary \ref{cor: functor from PROALG to PRO}, there always exists such a model $\M$. Moreover, if $\M_1$ and $\M_2$ are two models of $\PROALG$ such that $G(\M_1)$ and $G(\M_2)$ are isomorphic to $G$, then $\M_1$ and $\M_2$ are isomorphic. We can therefore safely define the theory $$\Th(G)$$ of $G$ as $\Th(\M)$ for any model $\M$ of $\PROALG$ such that $G(\M)\simeq G$. Two proalgebraic groups $G$ and $H$ (not necessarily living over the same base field) are \emph{elementarily equivalent} if $\Th(G)=\Th(H)$. We may also express this as $G\equiv H$. In a similar spirit, a class $\mathcal{C}$ of proalgebraic groups (potentially over varying base fields) is called \emph{elementary} if the class of all models of $\PROALG$ such that the associated proalgebraic group lies in $\mathcal{C}$ is elementary. 

We conclude this section with a discussion of some elementary classes of proalgebraic groups.

\begin{prop} \label{prop: elementary classes}
	The following classes of proalgebraic groups are elementary:
	\begin{itemize}
		\item The class of all diagonalizable proalgebraic groups.
		\item The class of all unipotent proalgebraic groups.
		\item The class of all linearly reductive proalgebraic groups.
	\end{itemize}
\end{prop}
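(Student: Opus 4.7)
The plan is to produce, for each of the three classes, a schema of $\mathcal{L}$-sentences whose models are exactly those $\M$ with $G(\M)$ in the class. By Corollary \ref{cor: functor from PROALG to PRO} the isomorphism class of $G(\M)$ is determined by $\M$, and each of the three properties of $G$ is a property of the neutral tannakian category $(C(\M),\omega(\M))$, so the task reduces to translating the standard categorical characterization of each class into the language $\mathcal{L}$.

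For diagonalizable groups I would use the characterization: $G$ is diagonalizable iff every object of $\Rep(G)$ is a biproduct of one-dimensional subobjects. By the tensor skeletal axiom, each isomorphism class in $C$ is represented by a unique tensor irreducible object, so it suffices to assert, for each $n\geq 1$, that every $b\in B_{(1,n)}$ is a biproduct of $n$ objects of $B_{(1,1)}$. This is a single first-order sentence, witnessed by $2n$ existentially quantified morphisms in the sorts $B_{(1,1),(1,n)}$ and $B_{(1,n),(1,1)}$ obeying the biproduct relations of axiom (24).

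For unipotent groups I would use: $G$ is unipotent iff $\omega(b)^{G}\neq 0$ for every non-zero $b$, equivalently $\Hom_{C}(\mathds{1},b)\neq 0$ for every non-zero $b$. The schema, one sentence per $n\geq 1$, asserts that for every $b\in B_{(1,n)}$ there exist $f\in B_{(1,1),(1,n)}$ with $S^{B}(f)=1$, $T^{B}(f)=b$, a non-zero $v\in X_{(1,1)}(1)$, a non-zero $w\in X_{(1,n)}(b)$ and $a\in X_{(1,1),(1,n)}(f)$ with $S^{X}(a)=v$ and $T^{X}(a)=w$; since $\omega$ is faithful, any such $(v,w)$ certifies that $f$ is a non-zero morphism $\mathds{1}\to b$.

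For linearly reductive groups I would use: $G$ is linearly reductive iff $\Rep(G)$ is semisimple iff every monomorphism in $C$ admits a retraction. For each pair $n\leq m$ the axiom asserts that for all $a\in B_{(1,n)}$, $b\in B_{(1,m)}$ and $f\in B_{(1,n),(1,m)}$ from $a$ to $b$ with $\widetilde{f}$ injective, there exists $g\in B_{(1,m),(1,n)}$ whose composition with $f$ (guaranteed by axiom (11)) equals the identity morphism of $a$ (guaranteed by axiom (10)). Injectivity of $\widetilde{f}$ is first-order via the $LI_{p}$ relations: choose a tuple in $X_{(1,n)}(a)$ with $LI_{(1,n)}$ and demand that its image, read off from $X_{(1,n),(1,m)}(f)$ through $S^{X}$ and $T^{X}$, satisfies $LI_{(1,m)}$. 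The main obstacle in each case is merely bookkeeping: verifying that every ingredient of the categorical characterization is realized by data present in the sorts of $\mathcal{L}$. The axioms of $\PROALG$ have been set up precisely to make this translation routine.
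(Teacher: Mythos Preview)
Your proposal is correct and follows the same approach as the paper: in each case you invoke the standard representation-theoretic characterization (direct sum of one-dimensional objects; existence of a nonzero morphism from $\mathds{1}$; semisimplicity) and note that it translates into a schema of $\mathcal{L}$-sentences, which is exactly what the paper sketches, only with less detail. One small slip to fix: in the linearly reductive case your test for injectivity via $LI_{(1,m)}$ does not typecheck, since $LI_{(1,m)}$ is $m$-ary while the image of a basis of $\omega(a)$ has only $n\leq m$ vectors; simply replace this by the first-order condition that $\widetilde{f}$ has trivial kernel (for every $x\in X_{(1,n),(1,m)}(f)$, if $S^X(x)$ is not the zero vector then neither is $T^X(x)$).
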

\begin{proof}
	  The definition of diagonalizable proalgebraic groups is recalled in the beginning of the next section. Let us simply mention here that according to Prop. 1.6, Chapter IV, \S 1, in \cite{DemazureGabriel:GroupesAlgebriques} a proalgebraic group is diagonalizable if and only if every representation of $G$ is a direct sum of one-dimensional representations and the latter condition can be axiomatized. 
	
	A unipotent proalgebraic group can be defined as a proalgebraic group $G$ such that every representation of $G$ has a fixed vector (cf. \cite[Section 8.3]{Waterhouse:IntroductiontoAffineGroupSchemes}). This condition can be axiomatized by saying that for every representation $V$ there exists a morphism $\mathds{1}\to V$.
	
	Recall that a proalgebraic group $G$ is linearly reductive if and only if every representation of $G$ is a direct sum of irreducible representations. This condition can be axiomatized.		
	\end{proof}
In the following section we will show that the class of all algebraic groups is not elementary (Corollary~\ref{cor: algebraic not elementary}).

\section{Diagonalizable proalgebraic groups}

In this section we determine the theory of the multiplicative group $\Gm$ and deduce some basic consequences for the theory $\PROALG$ from this. We show that $\Th(\Gm)$ is determined by the theory of the base field and the theory of the abelian group $(\mathbb{Z},+)$. In fact, we establish a similar result for any diagonalizable proalgebraic group: The theory of a diagonalizable proalgebraic group $G$ is determined by the theory of the base field and the theory of the character group of $G$. Indeed, we show that the theory of all diagonalizable proalgebraic groups is weakly bi-interpretable with the two-sorted theory of pairs $(k,A)$, where $k$ is a field and $A$ an abelian group.

Let us first recall some basic facts about diagonalizable proalgebraic groups. See e.g., \cite[Section 2.2]{Waterhouse:IntroductiontoAffineGroupSchemes}, \cite[Section 12, c]{Milne:AlgebraicGroupsTheTheoryOfGroupSchemesOfFiniteTypeOverAField} or \cite[Chapter IV, \S 1, 1]{DemazureGabriel:GroupesAlgebriques}. Let $k$ be a field and let $A$ be an abelian group (usually written additively). The proalgebraic group $D(A)_k$ over $k$ is defined by $D(A)_k(R)=\Hom(A,R^\times)$ for any $k$-algebra $R$, where $\Hom(A,R^\times)$ denotes the abelian group of all morphisms of abelian groups from $A$ to $R^\times$. (Here, as usual, $R^\times$ denotes the multiplicative group of a ring $R$.) For example, $D(\mathbb{Z})_k\simeq \Gm$, or more generally, $D(\mathbb{Z}^n)_k\simeq \Gm^n$. A proalgebraic group is \emph{diagonalizable} if it is isomorphic to $D(A)_k$ for some abelian group $A$.
The functor $A\rightsquigarrow D(A)_k$ is an equivalence of categories from the category of abelian groups to the category of diagonalizable proalgebraic groups over $k$. The quasi-inverse is the functor that associates the character group to a diagonalizable proalgebraic group. Recall that the \emph{character group} $\chi(G)$ of a proalgebraic group $G$ is the abelian group of all morphisms of proalgebraic groups from $G$ to $\Gm$. Note that $\chi(G)$ is isomorphic to the abelian group of isomorphism classes of one-dimensional representations of $G$ under the tensor product.

As noted before, in general, for a proalgebraic group $G$, there does not seem to be a \emph{canonical} way to define a model $\M$ of $\PROALG$ such that $G(\M)\simeq G$. (Recall that by Corollary \ref{cor: functor from PROALG to PRO} such an $\M$ always exists and is unique up to an isomorphism.) However, if $G=D(A)_k$ is diagonalizable, there is a canonical choice which we will now describe. This will be helpful later on (Theorem \ref{theo: weak bi-interpretability for diagonalizable}) for showing that $\M$ is interpretable in the structure $(k,A)$.

Given a field $k$ and an abelian group $A$ we will now define a model $$\M(k,A)=(k(k,A),B_p(k,A),X_p(k,A),B_{p,q}(k,A),X_{p,q}(k,A))$$ of $\PROALG$ such that  $G(\M(k,A))\simeq D(A)_k$. We set $k(k,A)=k$ (including the field structure). The isomorphism classes of representations of $D(A)_k$ of dimension $n$ are in bijection with multisets of elements of $A$ of cardinality $n$ (see e.g., \cite[Prop. 1.6, Chapter IV, \S 1]{DemazureGabriel:GroupesAlgebriques}).
In more detail, for $a\in A$ let $k_a$ denote the one-dimensional representation of $D(A)_k$ given by the morphism $\chi_a\colon D(A)_k\to\Gm$ with $\chi_a(g)=g(a)$ for all $g\in D(A)_k(R)=\Hom(A,R^\times)$ and all $k$-algebras $R$. Then the map $$\{a_1,\ldots,a_n\}\mapsto W_{\{a_1,\ldots,a_n\}}=k_{a_1}\oplus\ldots\oplus k_{a_n}$$ induces a bijection between the set of multisets of cardinality $n$ of elements of $A$ and the set of isomorphism classes of $n$-dimensional representations of $D(A)_k$. So, for $n\geq 1$, we can define $B_{(1,n)}(k,A)$ as the set of multisets of elements from $A$ of cardinality $n$. In general, for $p=(m,n)$ we define $B_{p}(k,A)$ as the set of all completely parenthesized sequences of $m$-multisets $A_1,\ldots,A_m$ formed from elements from $A$ such that $|A_1|\ldots|A_m|=n$.
The representation corresponding to such a parenthesized sequence of multisets would be $W_{A_1}\otimes\ldots\otimes W_{A_m}$ with the corresponding parenthesization of the tensor product, where the $W_{A_i}$'s are understood to be tensor irreducible. Explicitly, for an element $b$ of $B_p(k,A)$ determining a parenthesization of $A_1,\ldots,A_m$, we let $\underline{v}_b$ denote the multiset $A_1\times \ldots\times A_m$, where an element $(a_1,\ldots,a_m)$ of $\underline{v}_b$ is considered as a parenthesized sequence with the same parenthesization pattern as the sequence $A_1\times \ldots\times A_m$. We let $V_b$ denote the $n$-dimensional $k$-vector space with basis $\underline{v}_b$. In other words, $V_b$ is the $k$-vector space of functions from $\underline{v}_b$ to $k$.

We define $X_p(k,A)$ to be the (disjoint) union of the $V_b$'s and we let $\pi_p\colon X_p(k,A)\to B_p(k,A)$ denote the map that maps an element in $V_b$ to $b$. We use the vector space structure on the $V_b$'s to define $0_p$, $A_p$, $LI_p$, and $SM_p$. We also define the interpretation of the constant symbol $1$ to correspond to the neutral element of $A$, considered as an element of $B_{(1,1)}(k,A)$.

We next want to define $\otimes_{p,\widehat{p}}$, where $p=(m,n)$ and $\widehat{p}=(\widehat{m},\widehat{n})$. Note that two elements $b\in B_p(k,A)$ and $\widehat{b}\in B_{\widehat{b}}(k,A)$ can be concatenated to an element $b\otimes \widehat {b}\in B_{p\widehat{p}}(k,A)$. Similarly, two elements $v\in \underline{v}_b$ and $\widehat{v}\in \underline{v}_{\widehat{b}}$ can be concatenated to an element $v\widehat{v}\in \underline{v}_{b\otimes\widehat{b}}$. This defines bilinear maps $V_b\times V_{\widehat{b}}\to V_{b\otimes \widehat{b}}$ that combine to a map $\otimes_{p,\widehat{p}}\colon X_p(k,A)\times X_{\widehat{p}}(k,A)\to X_{p\widehat{p}}(k,A)$.

We next want to define the morphism sorts. Note that for $a_1,a_2\in A$, there is a non-zero morphism (of $D(A)_k$-representations) from $k_{a_1}$ to $k_{a_2}$ if and only if $a_1=a_2$. Moreover, for $a\in A$, every linear map $k_a^{n_1}\to k_a^{n_2}$ is a morphism. This yields a description of the morphisms from $W_{A_1}=\oplus_{a_1\in A_1} k_{a_1}$ to $W_{A_2}=\oplus_{a_2\in A_2} k_{a_2}$ for finite multisets $A_1$ and $A_2$ consisting of elements of $A$: For $a\in A$ and $i=1,2$ let $$W_{i,a}=\bigoplus_{a_1\in A_1 \atop a_1=a}k_{a_1}$$
and let $\Hom_k(W_{1,a},W_{2,a})$ denote the set of $k$-linear maps from $W_{1,a}$ to $W_{2,a}$. 
Then the set of morphisms of $D(A)_k$-representations from $W_{A_1}$ to $W_{A_2}$ can be identified with $\prod_{a\in A}\Hom_k(W_{1,a},W_{2,a})$.

For $b\in B_p(k,A)$ and $v\in \underline{v}_b$ determining a parenthesization of $(a_1,\ldots,a_m)\in A^m$ we set $|v|=a_1+\ldots+a_m$. 
Furthermore, for $a\in A$ let $V_{b,a}$ denote the subspace of $V_b$ generated by all $v\in \underline{v}_b$ such that $|v|=a$. 

For $b\in B_p(k,A)$ and $\widehat{b}\in B_{\widehat{p}}(k,A)$ let 
$H_{b,\widehat{b}}$ denote  $\prod_{a\in A}\Hom_k(V_{b,a},V_{\widehat{b},a})$ considered as a subset of $\Hom_k(V_b,V_{\widehat{b}})$.
We set $$B_{p,\widehat{p}}(k,A)=\left\{(b,\widehat{b},H_{b,\widehat{b}})|\ b\in B_p(k,A),\ \widehat{b}\in B_{\widehat{p}}(k,A)\right\}$$
and 
$$X_{p,\widehat{p}}(k,A)=\left\{(b, \widehat{b}, h, v, h(v))|\ b\in B_p(k,A),\ \widehat{b}\in B_{\widehat{p}}(k,A),\ h\in H_{b,\widehat{b}},\ v\in V_b\right\}.$$
We define $\pi_{p,\widehat{p}}\colon X_{p,\widehat{p}}(k,A)\to B_{p,\widehat{p}}(k,A)$ to be the projection onto the first three factors while $S^B_{p,\widehat{p}}\colon B_{p,\widehat{p}}(k,A)\to B_p(k,A)$ and $T^B_{p,\widehat{p}}\colon B_{p,\widehat{p}}(k,A)\to B_{\widehat{p}}(k,A)$ are defined as the projections onto the first and second factor respectively. Similarly, $S^X_{p,\widehat{p}}\colon X_{p,\widehat{p}}(k,A)\to X_p(k,A)$ and $T^X_{p,\widehat{p}}\colon X_{p,\widehat{p}}(k,A)\to X_{\widehat{p}}(k,A)$ are defined as the projections onto the third and fourth factor respectively.

This completes our definition of the $\mathcal{L}$-structure $\M(k,A)$. It is clear from the construction (and the proof of Corollary \ref{cor: functor from PROALG to PRO}) that $\M(k,A)$ is a model of $\PROALG$ and that $G(\M(k,A))\simeq D(A)_k$.

\medskip

Since the addition in the character group can be described through the tensor product it is not surprising that the character group of $G(\M)$ is interpretable in $\M$ for any model $\M$ of $\PROALG$:

%
%

\begin{lemma} \label{lemma: interpret character group}
	Let $\M$ be a model of $\PROALG$. Then the character group of $G(\M)$ is definably interpreted in $\M$.
\end{lemma}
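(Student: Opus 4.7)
My plan is to interpret the character group on the sort $B_{(1,1)}$ directly, using that in a tensor skeletal setting every one-dimensional object has a unique representative among the tensor irreducibles. Concretely, $B_{(1,1)}$ indexes the tensor irreducible objects of dimension $1$ in $C(\M)$; by axiom (21) every one-dimensional object of $C(\M)$ is isomorphic to a tensor irreducible one, and the tensor irreducible representative in its isomorphism class is unique. Combined with the equivalence $C(\M)\simeq\Rep(G(\M))$ from Theorem \ref{theo: tannaka main} and the standard identification of $\chi(G)$ with the isomorphism classes of one-dimensional representations of $G$ under $\otimes$, this already gives a bijection between $B_{(1,1)}$ and $\chi(G(\M))$.

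I would then define the group operation on the sort $B_{(1,1)}$ as follows. For $b_1,b_2\in B_{(1,1)}$ the tensor product $b_1\otimes b_2$ (in the sense of axiom (15)) lies in $B_{(2,1)}$. By axiom (21) there is a unique $c\in B_{(1,1)}$ together with an element $f\in B_{((2,1),(1,1))}$ with $\widetilde f\colon X_{(2,1)}(b_1\otimes b_2)\to X_{(1,1)}(c)$ bijective. Setting $b_1\ast b_2:=c$ defines a function on $B_{(1,1)}$ by a first order formula (existence-and-uniqueness of such a $c$). The neutral element is the constant $1\in B_{(1,1)}$ (axiom (22) gives $1\otimes b\simeq b$), and inverses come from axiom (23): for $b\in B_{(1,1)}$ the dual $b^\vee$ lies in $B_{(1,1)}$, and since $\dim=1$ the evaluation morphism $\widetilde f\colon X(b)\otimes X(b^\vee)\to \mathds{1}$ is between one-dimensional spaces and is nonzero by the rigidity identities, hence bijective, so $b\ast b^\vee=1$. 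Associativity and commutativity of $\ast$ are inherited from axioms (17) and (18), via the constraints $\Phi$ and $\Psi$.

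The only genuinely thing to verify, and the main (minor) obstacle, is that the group $(B_{(1,1)},\ast,1)$ thus defined is canonically isomorphic to $\chi(G(\M))$. This is a soft argument: via the tensor equivalence $\omega\colon C(\M)\to\Rep(G(\M))$ of Theorem \ref{theo: tannaka main} one-dimensional objects correspond to one-dimensional representations, and the operation $\ast$ on iso classes corresponds on the $\Rep(G)$-side to the tensor product of one-dimensional representations, which is precisely the group law on $\chi(G(\M))$; the identity $b\ast b^\vee=1$ corresponds to the fact that $V^\vee$ is the inverse character. Since all data used to define the interpretation (the sort $B_{(1,1)}$, the operation $\ast$, the constant $1$, and the unary inversion $b\mapsto b^\vee$ characterized as the unique $b'\in B_{(1,1)}$ with $b\ast b'=1$) are first-order definable from the symbols of $\mathcal L$, we obtain a definable interpretation of $\chi(G(\M))$ in $\M$.
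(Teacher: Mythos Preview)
Your proposal is correct and follows essentially the same approach as the paper: both identify $B_{(1,1)}$ with $\chi(G(\M))$ via the tensor skeletal property and define the group law by tensoring and passing to the unique tensor irreducible representative, with the constant $1$ as identity. Your write-up is somewhat more detailed (you explicitly verify inverses via duals and associativity/commutativity via axioms (17), (18), (23)), whereas the paper's proof is terser and leaves these to the identification with $\chi(G)$.
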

\begin{proof}
	Let $\M=(k,B_p,X_p,B_{p,q},X_{p,q})$. Then $B_{(1,1)}$ can be identified with the isomorphism classes of one-dimensional representations of $G=G(\M)$, i.e., with $\chi(G)$. The graph of addition in $\chi(G)=B_{(1,1)}$ consists of all $(b_1,b_2,b_3)\in B_{(1,1)}^3$ such that there exist $v_1\in \omega(b_1), v_2\in\omega(b_2)$ and $v_3\in \omega(b_3)$ and an isomorphism between $\pi_{(2,1)}(v_1\otimes v_2)$ and $b_3$. This set is $\emptyset$-definable. The identity element of $B_{(1,1)}$ is given by the constant symbol $1$.
\end{proof}

To proceed, let us recall the notion of \emph{interpretation} in the many-sorted context (cf. \cite[Chapter 5]{Hodges:ModelTheory} for the one-sorted case).  Let $\mathcal{L}$ and $\mathcal{L}'$ be many-sorted languages with sorts $S$ and $S'$ respectively. An \emph{interpretation $\Xi$ of $\mathcal{L}'$ in $\mathcal{L}$} is comprised of the following data:
\begin{itemize}
	\item For every sort $s'\in S'$ an $\mathcal{L}$-formula $\partial_{\Xi,s'}(x_1^{s_1},\ldots,x_n^{s_n})$ (the domain formula for the sort $s'$) in the free variables $\overline{x}_{s'}=(x_1^{s_1},\ldots,x_n^{s_n})$, where $n$ and $s_1,\ldots,s_n\in S$ depend on $s'$. (Here the notation $x^s$ means that the variable $x$ belongs to the sort $s$.)
	\item For every $s'\in S'$ an $\mathcal{L}$-formula $=_{\Xi, s'}(\overline{x}_{s'},\overline{y}_{s'})$ (the equivalence formula for the sort $s'$).
	\item For every function symbol $f'\in\mathcal{L'}$ that maps sorts $s'_1,\ldots,s'_r$ into sort $s'_{r+1}$ an $\mathcal{L}$-formula $f'_\Xi(\overline{x}_{s_1'},\ldots,\overline{x}_{s'_{r+1}})$.
	\item For every relation symbol $R'$ of $\mathcal{L}'$ between sorts $s'_1,\ldots,s'_r$ an $\mathcal{L}$-formula 
	$R'_{\Xi}(\overline{x}_{s_1'},\ldots,\overline{x}_{s'_{r}})$.
	\item For every constant symbol $c'$ in $\mathcal{L'}$ with sort $s'$ an $\mathcal{L}$-formula $c'_\Xi(\overline{x}_{s'})$.
\end{itemize} 
The \emph{admissibility conditions} of $\Xi$ are the $\mathcal{L}$-sentences expressing that for every $\mathcal{L}$-structure $M=(M_s)_{s\in S}$ the following holds:

\begin{itemize}
	\item For all $s'\in S'$ the formula $=_{\Xi,s'}$ defines an equivalence relation on $\partial_{\Xi,s'}(M)$. We will denote this equivalence relation simply by $\sim$ (even though it depends on $\Xi, s'$ and $M$). 
	\item For every function symbol $f'$ of ${\mathcal{L}'}$ that maps sorts $s'_1,\ldots,s'_r$ into sort $s'_{r+1}$ we have
	\begin{itemize}
		\item  $M\models f'_{\Xi}(\overline{a}_{s_1'},\ldots,\overline{a}_{s'_{r+1}})$ if and only if $M\models f'_{\Xi}(\overline{b}_{s_1'},\ldots,\overline{b}_{s'_{r+1}})$ for all tuples $\overline{a}_{s'_1},\ldots,\overline{a}_{s'_{r+1}},\overline{b}_{s'_1},\ldots,\overline{b}_{s'_{r+1}}$ from $M$ with $\overline{a}_{s'_1}\sim\overline{b}_{s'_1},\ldots, \overline{a}_{s'_{r+1}}\sim\overline{b}_{s'_{r+1}}$ and
		\item the induced subset of $\partial_{\Xi, s'_1}(M)/\sim\times\ldots\times \partial_{\Xi,s'_{r+1}}(M)/\sim$ is the graph of a function $f'(M)\colon \partial_{\Xi, s'_1}(M)/\sim\times\ldots\times\partial_{\Xi,s'_r}(M)/\sim\to \partial_{\Xi,s'_{r+1}}(M)/\sim$.
	\end{itemize}
\item  For every relation symbol $R'$ of $\mathcal{L}'$ between sorts $s'_1,\ldots,s'_r$ we have
 $M\models R'_{\Xi}(\overline{a}_{s_1'},\ldots,\overline{a}_{s'_{r}})$ if and only if $M\models R'_{\Xi}(\overline{b}_{s_1'},\ldots,\overline{b}_{s'_{r}})$ for all tuples $\overline{a}_{s'_1},\ldots,\overline{a}_{s'_{r}},\overline{b}_{s'_1},\ldots,\overline{b}_{s'_{r}}$ from $M$ with $\overline{a}_{s'_1}\sim\overline{b}_{s'_1},\ldots, \overline{a}_{s'_{r}}\sim\overline{b}_{s'_{r}}$. We thus have an induced relation $R'(M)$ on $\partial_{\Xi, s'_1}(M)/\sim\times\ldots\times\partial_{\Xi,s'_r}(M)/\sim$.
 \item 	For every constant symbol $c'$ in $\mathcal{L'}$ with sort $s'$ the realizations of $c'_{\Xi}(\overline{x}_{s'})$ in $M$ are an equivalence class $c'(M)$ in $\partial_{\Xi,s'}(M)$.
\end{itemize}
Note that if $M$ is an $\mathcal{L}$-structure that satisfies the admissibility conditions of $\Xi$, then
$\Xi(M)=(\partial_{\Xi,s'}(M)/\sim)_{s'\in S'}$ is an $\mathcal{L}'$-structure.

Now let $T$ be an $\mathcal{L}$-theory and let $T'$ be an $\mathcal{L}'$-theory. An interpretation $\Xi$ of $\mathcal{L}'$ in $\mathcal{L}$ is a \emph{left total interpretation} of $T'$ in $T$ if every model of $T$ satisfies the admissibility conditions of $\Xi$ and if for every model $M$ of $T$ the $\mathcal{L}'$-structure $\Xi(M)$ is a model of $T'$. Thus, from every model $M$ of $T$ we get a model of $\Xi(M)$ of $T'$.

Finally, following \cite[Section 3.3.]{Visser:CategoriesOfTheoriesAndInterpretations}, the theories $T$ and $T'$ are \emph{weakly bi-interpretable} if there exists a left total interpretation $\Xi$ of $T'$ in $T$ and a left total interpretation $\Omega$ of $T$ in $T'$ such that for every model $M$ of $T$ the $\mathcal{L}$-structures $\Omega(\Xi(M)$ and $M$ are isomorphic and for every model $M'$ of $T'$ the $\mathcal{L}'$-structures $\Xi(\Omega(M'))$ and $M'$ are isomorphic. (Bi-interpretability is the slightly stronger notion where the above isomorphisms are required to be uniformly definable.)

By the \emph{theory of diagonalizable proalgebraic groups} we mean the set of all $\mathcal{L}$-sentences true in all models $\M$ of $\PROALG$ such that $G(\M)$ is diagonalizable (cf. Proposition \ref{prop: elementary classes}).

\begin{theo} \label{theo: weak bi-interpretability for diagonalizable}
	The theory of diagonalizable proalgebraic groups is weakly bi-interpretable with the two-sorted theory of pairs $(k,A)$, where $k$ is a field and $A$ an abelian group.
\end{theo}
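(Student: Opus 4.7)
The plan is to construct interpretations in both directions and verify the two required composition isomorphisms. Write $T$ for the theory of diagonalizable proalgebraic groups in the language $\mathcal{L}$ of $\PROALG$, and $T'$ for the two-sorted theory of pairs $(k,A)$ in an obvious two-sorted language $\mathcal{L}'$ with a field sort and an abelian-group sort.

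The interpretation $\Xi$ of $T'$ in $T$ is essentially supplied by Lemma \ref{lemma: interpret character group}: given a model $\M$ of $T$, interpret the field sort as $k(\M)$ and the abelian-group sort as $B_{(1,1)}$ with addition induced by the tensor product. Since $G(\M)$ is diagonalizable, $B_{(1,1)}$ is literally the character group of $G(\M)$, so $\Xi(\M)=(k(\M),\chi(G(\M)))$. The admissibility and left-totality on $T$ are automatic.

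For the interpretation $\Omega$ of $T$ in $T'$, the plan is to formalize the construction $\M(k,A)$ described just before the theorem as a uniform interpretation in $(k,A)$. For each fixed $p=(m,n)$, the set $B_p(k,A)$ decomposes as a finite disjoint union indexed by the (finitely many) pairs consisting of a complete parenthesization of a length-$m$ sequence and a factorization $n=n_1\cdots n_m$; for each such shape, the corresponding piece is the quotient of $A^{n_1+\cdots+n_m}$ by the finite group $S_{n_1}\times\cdots\times S_{n_m}$ permuting within each block, which is definable as the orbits of a definable finite group action. The sort $X_p$ is $B_p\times k^n$ with the fibrewise vector space structure; the constant $1\in B_{(1,1)}$ is the neutral element of $A$. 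The sort $B_{p,\hat p}$ is the set of triples $(b,\hat b,h)$ with $h\in k^{\hat n\times n}$ satisfying the character-grading condition that every matrix entry linking basis vectors of distinct characters vanishes---a first-order condition because, for each shape, the character of each basis vector is a definable sum of coordinates of $b$ in $A$. The sort $X_{p,\hat p}$ consists of evaluation tuples $(b,\hat b,h,v,h(v))$. The remaining operations (source/target, projections, tensor product by concatenation of parenthesizations and product of tuples, addition, scalar multiplication, linear independence, zero predicates) translate into direct formulas over $(k,A)$. Axioms (1)--(27) then follow by inspection of the construction, so $\Omega$ is left total and by construction $G(\Omega((k,A)))\simeq D(A)_k$, which is diagonalizable.

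It remains to verify the two composition conditions. For $\Xi\circ\Omega$, starting from $(k,A)$, the group $G(\Omega((k,A)))\simeq D(A)_k$ has character group canonically isomorphic to $A$ under the quasi-inverse of $A\rightsquigarrow D(A)_k$, so $\Xi(\Omega((k,A)))\cong(k,A)$. For $\Omega\circ\Xi$, starting from $\M\models T$ with $G=G(\M)$ diagonalizable, one has $\Xi(\M)=(k,\chi(G))$, and $\Omega$ produces $\M(k,\chi(G))$, whose associated proalgebraic group is $D(\chi(G))_k\cong G$; the bijection on isomorphism classes from Corollary \ref{cor: functor from PROALG to PRO} then yields $\M(k,\chi(G))\cong\M$. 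The main obstacle I expect is the combinatorial bookkeeping needed to write out $\Omega$ formally---in particular, carefully indexing the disjoint-union decompositions of $B_p$ by parenthesization shapes and factorizations of $n$, coding multisets as orbits under symmetric group actions, and checking that the tensor operation corresponds to concatenation of parenthesizations combined with products of tuples. Once these formulas are set up, the verification of the axioms and the composition isomorphisms reduces to transcribing definitions and invoking Corollary \ref{cor: functor from PROALG to PRO}.
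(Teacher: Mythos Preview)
Your proposal follows essentially the same route as the paper: $\Xi$ is the character-group interpretation of Lemma~\ref{lemma: interpret character group}, $\Omega$ formalizes the explicit model $\M(k,A)$ constructed just before the theorem, and the two composition isomorphisms are verified via $\Xi(\Omega(k,A))\cong(k,A)$ directly and $\Omega(\Xi(\M))\cong\M$ through Corollary~\ref{cor: functor from PROALG to PRO}. The paper's encoding of shapes differs cosmetically (it packs the finitely many parenthesization/factorization patterns into a single ambient product via binary strings in the field sort, rather than treating them as a disjoint union), but the substance is the same.

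One point does need correcting. You write that ``the sort $X_p$ is $B_p\times k^n$'' and similarly describe $B_{p,\hat p}$ as triples $(b,\hat b,h)$ with $h\in k^{\hat n\times n}$. This is not quite right: once you have passed to the quotient $B_p$, there is no canonical ordered basis of $V_b$, so a coordinate vector in $k^n$ (or a matrix in $k^{\hat n\times n}$) has no invariant meaning, and the tensor map $\otimes_{p,\hat p}$ cannot be written as a formula on $B_p\times k^n$. The quotient must be taken at the level of the product: the domain for $X_p$ is (a union over shapes of) $A^{n_1+\cdots+n_m}\times k^n$, with the symmetric group acting diagonally on both factors---permuting the $A$-entries and simultaneously permuting the induced ordered basis of $V_b$ and hence the coordinates in $k^n$. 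The paper does exactly this (its equivalence formula $=_{\Omega,X_p}$ couples the permutation of the $A$-tuple with the corresponding permutation of the $k^n$-coordinates). The same adjustment is needed for $B_{p,\hat p}$ and $X_{p,\hat p}$. This is precisely the ``combinatorial bookkeeping'' you anticipate, but it is not merely cosmetic: without it the function symbols $\otimes_{p,q}$, $S^X$, $T^X$ do not descend to well-defined maps.
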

\begin{proof}
	Let $\mathcal{L}'$ denote the two-sorted language with the language of rings on the first sort (the field sort) and the language of abelian groups on the second sort (the group sort). Let $T'$ denote the $\mathcal{L}'$-theory of all pairs $(k,A)$ where $k$ is a field and $A$ an abelian group. Moreover, let $T$ denote the $\mathcal{L}$-theory of diagonalizable proalgebraic groups.

	The interpretation $\Xi$ of $\mathcal{L}'$ in $\mathcal{L}$ is relatively easy to describe (cf. Lemma \ref{lemma: interpret character group}): The domain formula for the field sort of $\mathcal{L}'$ is trivial (i.e., equal to $x_1=x_1$, where $x_1$ belongs to the field sort of $\mathcal{L}$) so that it returns the field sort of $\mathcal{L}$. Similarly, the domain formula for the group sort of $\mathcal{L}'$ is trivial so that it returns the sort $B_{(1,1)}$ of $\mathcal{L}$. The two equivalence formulas are also trivial, so that the corresponding equivalence relation simply expresses equality of elements. 
	The interpretation of the ring language on the field sort of $\mathcal{L}'$ is the ring language on the field sort of $\mathcal{L}$.
	The $\mathcal{L}'$-symbol for the identity element of the group is to be interpreted as the $\mathcal{L}$-symbol $1$ (corresponding to the trivial representation). Finally, the addition symbol $+$ on the group sort, yields the formula $+_\Xi$ that defines in every model $\M$ of $T$ the set of all
	 $(b_1,b_2,b_3)\in B_{(1,1)}^3$ such that there exist $v_1\in \omega(b_1), v_2\in\omega(b_2)$ and $v_3\in \omega(b_3)$ and an isomorphism between $\pi_{(2,1)}(v_1\otimes v_2)$ and $b_3$. Clearly $\Xi$ is a left total interpretation of $T'$ in $T$.

	\medskip

	 We will next construct an interpretation $\Omega$ of $\mathcal{L}$ in $\mathcal{L}'$. The idea for the construction is rather simple but a little tedious to implement. The formulas for $\Omega$ boil down to interpreting the $\mathcal{L}$-structure $\M(k,A)$ defined above in the $\mathcal{L}'$-structure $(k,A)$. 	 
 We begin with the domain formulas $\partial_{\Omega,s}$, where $s$ is a sort from $\mathcal{L}$. The domain formula for the field sort of $\mathcal{L}$ simply returns the field sort of $\mathcal{L}'$.

{\bf Definition of $\partial_{\Omega,B_p}$:} For $p=(m,n)$, we consider, for every $\mathcal{L}'$-structure $(k,A)$, the set $P_p(k,A)$ of completely parenthesized sequences  
	$$(a_1^1,\ldots,a^1_{n_1}),\ldots,(a_{1}^m,\ldots,a_{n_m}^m)$$ of sequences in $A$ with $n_1\ldots n_m=n$.
To describe $P_p(k,A)$ inside $(k,A)$ we can encode the pattern associated with such a parenthesization of a sequence of sequences in a sequence of zero's and one's belonging to $k$. While there are different ways to do this, for the sake of concreteness, let us fix the following decoding. A sequence of zero's and one's always has to be read by blocks of two elements according to the following convention:
	\begin{itemize}
		\item A block $10$ is to be read as an opening parenthesis ``(''.
		\item A block $01$ is to be read as a closing parenthesis ``)''.
		\item A block $00$ is to be interpreted as a place holder for an element of $A$.
	\end{itemize}
We also use parenthesis to separate place holders that correspond to different sequences in $A$.
For example, the element $\Big(\big(( a_1,a_2,a_3) (a_4)\big) (a_5,a_6)\Big)$ of $P_{(3,6)}(k,A)$
yields the pattern
$$(((\bullet \bullet \bullet)(\bullet))(\bullet \bullet))$$
 that is encoded in the sequence $$ 10 \ 10 \ 10 \ 00 \ 00 \ 00 \ 01 \ 10 \ 00 \ 01 \ 01 \ 10 \ 00 \ 00 \ 01 \ 01.$$
Note that different patterns may yield binary sequences of different lengths. Let $r=r(p)$ denote the maximal length of all theses binary sequences and let $D_{p}(k,A)\subseteq \{0,1\}^r\subseteq k^r$ denote the set of all binary sequences that are derived from elements of $P_p(k,A)$. Here a binary sequence of length less than $r$ is extended to a sequence of length $r$ by adding $11$-blocks. Let $s=s(p)$ denote the maximum number of $00$-blocks that occur in any element of $D_{p}(k,A)$ and let $F_{p}(k,A)$ denote the subset of $k^r\times A^s$ consisting of all elements of the form $(d,a_1,\ldots,a_t,0,\ldots,0)$, where $d\in D_{p}(k,A)$, $t$ is the number of $00$-blocks occurring in $d$ and $a_1,\ldots,a_t\in A$. By construction, the set $F_p(k,A)$ is in bijection with $P_p(k,A)$.

We let $\partial_{\Omega,B_{p}}$ denote the $\mathcal{L}'$-formula that defines in every $\mathcal{L}'$-structure $(k, A)$ the subset $F_{p}(k,A)$ of $k^r\times A^s$. An element $f$ of $F_p(k,A)$ corresponding to a complete parenthesization of a sequence $$(a_1^1,\ldots,a^1_{n_1}),\ldots,(a_{1}^m,\ldots,a_{n_m}^m)$$ of sequences in $A$ with $n_1\ldots n_m=n$ yields a totally ordered multiset $\underline{v}_f$ of cardinality $n$ consisting of all sequences of length $m$ in $A$ that are of the form $(a^1_{i_1},\ldots,a^m_{i_m})$ with $1\leq i_1\leq n_1,\ldots,1\leq i_m\leq n_m$. Alternatively, we can define $\underline{v}_f$ as the multiset product $\{a_1^1,\ldots,a_{n_1}^1\}\times\ldots\times \{a_1^m,\ldots,a_{n_m}^m\}$. The order on $\underline{v}_f$ is obtained by stipulating that $a_1^j<a_2^j<\ldots<a_{n_j}^j$ for $j=1,\ldots,m$ and then using the lexicographic order.
Let $V_f$ denote the $k$-vector space with basis $\underline{v}_f$. We think of an element $f$ of $F_p(k,A)$ as determining the pair $(V_f,\underline{v}_f)$, where $\underline{v}_f$ is an ordered basis of $V_f$.

\medskip

{\bf Definition of $\partial_{\Omega,X_p}$:} 
We let $\partial_{\Omega,X_p}$ denote the formula that defines the set $F_p(k,A)\times k^n$ in every $\mathcal{L}'$-structure $(k,A)$. We think of an element $(f,\xi)\in F_p(k,A)\times k^n$ as determining the element $\underline{v}_f\xi=\xi_1 v_{f,1}+\ldots +\xi_n v_{f,n}$ of $V_f$, where $\xi=(\xi_1,\ldots,\xi_n)$ and $\underline{v}_f=\{v_{f,1},\ldots,v_{f,n}\}$ with $v_{f,1}<v_{f,2}<\ldots<v_{f,n}$.

\medskip

{\bf Definition of $\partial_{\Omega,B_{p,\widehat{p}}}$:}
Let $p=(m,n)$ and $\widehat{p}=(\widehat{m},\widehat{n})$. For every $\mathcal{L}'$-structure $(k,A)$ and $(a_1,\ldots,a_m)\in A^m$ we set $|a|=a_1+\ldots+a_m$ for a fixed but arbitrary parenthesization of this sum\footnote{The  parenthesization will ultimately not matter since only the case when $A$ is an abelian group is relevant for us.}. For $f\in F_p(k,A)$ we let $\Sigma(f)=\{|v|\ |\ v \in\underline{v}_f\}$ denote the multiset of all sums of all tuples in $\underline{v}_f$. Furthermore, for $a\in A$ we let $m_f(a)$ denote the multiplicity of $a$ in $\Sigma(f)$. Of course $m_f(a)=0$ for all but finitely many $a\in A$.
For $(f,\widehat{f})\in F_p(k,A)\times F_{\widehat{p}}(k,A)$ we set $r(f,\widehat{f})=\sum_{a\in A}m_f(a)m_{\widehat{f}}(a)$. Let $$r=\max \left\{r(f,\widehat{f})\Big| \ (f,\widehat{f})\in F_p(k,A)\times F_{\widehat{p}}(k,A)\right\}.$$
We define $H_{p,\widehat{p}}(k,A)$ to be the subset of $F_p(k,A)\times F_{\widehat{p}}(k,A)\times k^r$ consisting of all elements of the form $(f,\widehat{f},\lambda)$, where $f\in  F_{\widehat{p}}(k,A)$, $\widehat{f}\in F_{\widehat{p}}(k,A)$, $\lambda=(\lambda_1,\ldots,\lambda_r)\in k^r$ and $\lambda_i=0$ for $i>r(f,\widehat{f})$.
 Let $\partial_{\Omega,B_{p,\widehat{p}}}$ denote the $\mathcal{L}'$-formula that defines in every $\mathcal{L}'$-structure $(k,A)$ the set $H_{p,\widehat{p}}(k,A)$.

  We think of an element $(f,\widehat{f},\lambda)$ of $H_{p,\widehat{p}}(k,A)$ as defining a morphism $\psi_{(f,\widehat{f},\lambda)}\colon (V_f,\underline{v}_f)\to (V_{\widehat{f}},\underline{v}_{\widehat{f}})$ as follows:
 To simplify the formulas we set $m(a)=m_f(a)$ and $\widehat{m}(a)=m_{\widehat{f}}(a)$.
For each of the finitely many $a\in A$ such that $m(a)\widehat{m}(a)\geq 1$ let $$I_a=\{i_{1,a},\ldots,i_{m(a),a}\}=\{ i\in\{1,\ldots,n\}|\ |v_{f,i}|=a\}$$ and $$J_a=\{j_{1,a},\ldots,j_{\widehat{m}(a),a}\}=\{ j\in\{1,\ldots,\widehat{n}\}|\ |v_{\widehat{f},j}|=a\}$$ with 
$v_{f,i_{1,a}}<\ldots<v_{f,{i_{m(a),a}}}$ and $v_{\widehat{f},j_{1,a}}<\ldots<v_{\widehat{f},j_{\widehat{m}(a),a}}$.  We order the sets of the form $I_a$ by comparing the $v_{f,i_{1,a}}$. Say $I(a_1)<\ldots<I(a_s)$. For every $\ell=1,\ldots,s$ let $\underline{v}_{f,I_{a_\ell}}$ denote the (ordered) sequence of elements of $\underline{v}_f$ whose index belongs to $I_{a_\ell}$. We define $v_{\widehat{f},J_{a_\ell}}$ similarly. We define a $k$-linear map $\psi_\ell$ from the subspace of $V_f$ generated by $\underline{v}_{f,I_{a_\ell}}$ to the subspace of $V_{\widehat{f}}$ generated by $\underline{v}_{\widehat{f},J_{a_\ell}}$ by setting $\psi_\ell(\underline{v}_{f,I_{a_\ell}})=\underline{v}_{\widehat{f},J_{a_\ell}} M_{a_\ell}$, where $M_{a_\ell}$ is the $m(a_\ell)\times \widehat{m}(a_\ell)$-matrix obtained by putting, row by row, the entries of $\lambda=(\lambda_1,\ldots,\lambda_r)$ that start at index $m(a_1)\widehat{m}(a_1)+\ldots+m(a_{l-1})\widehat{m}(a_{l-1})+1$ and end at index $m(a_1)\widehat{m}(a_1)+\ldots+m(a_{l})\widehat{m}(a_l)$ into a matrix. Finally, we define the linear map 
$\psi_{(f,\widehat{f},\lambda)}\colon V_f\to V_{\widehat{f}}$ by $$\psi_{(f,\widehat{f},\lambda)}(v_{f,i})=\begin{cases}
\psi_\ell(v_{f,i}) \ \text{ if } i\in I_{a_\ell} \\
0 \quad \quad \text{ otherwise. }
\end{cases}$$ 

\medskip

{\bf Definition of $\partial_{\Omega,X_{p,\widehat{p}}}$:}
We define\footnote{Again the implicit use of matrix multiplication in this definition need not concern us, since ultimately we are only interested in the case when $k$ is a field and $A$ an abelian group.} $\partial_{\Omega,X_{p,\widehat{p}}}$ to be the $\mathcal{L}'$-formula that defines in every $\mathcal{L}'$-structure $(k,A)$ the set of all $(f,\widehat{f},\lambda,\xi,\widehat{\xi})$ such that $(f,\widehat{f},\lambda)\in H_{p,\widehat{p}}(k,A)$, $(f,\xi)\in\partial_{\Omega,X_p}(k,A)$,  $(\widehat{f},\widehat{\xi})\in\partial_{\Omega,X_{\widehat{p}}}(k,A)$ 
 and $\psi_{(f,\widehat{f},\lambda)}(\underline{v}_f\xi)=\underline{v}_{\widehat{f}}\widehat{\xi}$.
This concludes the definition of the domain formulas $\partial_{\Omega,s}$ for all sorts $s$ of $\mathcal{L}$.

\medskip
 We will next define the equivalence formulas $=_{\Omega,s}$. We define $=_{\Omega,k}$ to be the $\mathcal{L}'$-formula $x_1=x_2$, where $x_1$ and $x_2$ are variables from the field sort. (So the equivalence relation on the field sort is trivial.)

\medskip

{\bf Definition of $=_{\Omega,B_{p}}$:} We let $=_{\Omega,B_{p}}$ denote the $\mathcal{L}'$-formula that defines in every $\mathcal{L}'$-structure $(k,A)$ the following equivalence relation on $F_{p}(k,A)$: For $(d,a_1,\ldots,a_t,0,\ldots,0), (d',a'_1,\ldots,a'_{t'},0,\ldots,0)\in F_p(k,A)$ with $d,d'\in D_p(k,A)$ and $a_1,\ldots,a_t,a_1',\ldots,a_{t'}'\in A$ we have $$(d,a_1,\ldots,a_t,0,\ldots,0)\sim (d',a'_1,\ldots,a'_{t'},0,\ldots,0)$$
if $d=d'$ (so that also $t=t'$) and $a'_1,\ldots,a'_{t'}$ is a permutation of $a_1,\ldots,a_t$, where elements corresponding to the same string of $00$-blocks in $d=d'$ are permuted among themselves. Note that the map $f\mapsto b(f)$ that assigns to an $f\in F_p(k,A)$ corresponding to a complete parenthesization of a sequence $$(a_1^1,\ldots,a^1_{n_1}),\ldots,(a_{1}^m,\ldots,a_{n_m}^m)$$ of sequences in $A$ the corresponding parenthesization of the sequence $$\{a_1^1,\ldots,a^1_{n_1}\},\ldots,\{a_{1}^m,\ldots,a_{n_m}^m\} $$ of multisets, induces a bijection between $\partial_{\Omega, B_{p}}(k,A)/\sim$ and $B_p(k,A)$.

%
We note that $V_f$ only depends on the equivalence class of $f\in F_p(k,A)$. Indeed, the multiset underlying $\underline{v}_f$ only depends on the equivalence class of $f$. Only the ordering on the multiset $\underline{v}_f$ depends on $f$. Moreover, if the equivalence class of $f$ maps to $b\in B_p(k,A)$ under
$\partial_{\Omega, B_{p}}(k,A)/\sim\simeq B_p(k,A)$, then, with the notation introduced in the beginning of this section, $V_f=V_b$. 

\medskip

{\bf Definition of  $=_{\Omega,X_p}$:}
 Let $=_{\Omega,X_p}$ denote the formula that defines on every $\partial_{\Omega,X_p}(k,A)=F_p(k,A)\times k^n$ the equivalence relation 
$$(f,\xi)\sim (f',\xi') \Leftrightarrow f\sim f' \text{ and } \xi' \text{ is a permutation of } \xi \text{ such that } \underline{v}_f\xi=\underline{v}_{f'}\xi'.$$
We note that the map $(f,\xi)\to \underline{v}_f\xi$ induces a bijection $\partial_{\Omega,X_p}(k,A)/\sim\to \uplus V_f$, where the disjoint union is taken over all equivalence classes in $F_p(k,A)$. In other words, $\partial_{\Omega,X_p}(k,A)/\sim$ is in bijection with $X_p(k,A)$.

\medskip

{\bf Definition of $=_{\Omega,B_{p,\widehat{p}}}$:} 
Let $=_{\Omega,B_{p,\widehat{p}}}$ denote the $\mathcal{L}'$-formula that defines on every $H_{p,\widehat{p}}(k,A)$ the equivalence relation 
$$(f_1,\widehat{f_1},\lambda_1)\sim (f_2,\widehat{f_2},\lambda_2) \ \Leftrightarrow \ f_1\sim f_2,\ \widehat{f_1}\sim\widehat{f_2} \text{ and $\lambda_2$ is a permutation of $\lambda_1$ such that } \psi_{(f_1,\widehat{f_1},\lambda_1)}=\psi_{(f_2,\widehat{f_2},\lambda_2)}.$$
Then the map $(f,\widehat{f},\lambda)\to(b(f), b(\widehat{f}), \psi_{(f,\widehat{f},\lambda)})$ induces a bijection between $\partial_{\Omega,B_{p,\widehat{p}}}(k,A)/\sim$ and $B_{p,\widehat{p}}(k,A)$.

\medskip

{\bf Definition of  $=_{\Omega,X_{p,\widehat{p}}}$:}
Let $=_{\Omega,X_{p,\widehat{p}}}$ denote the $\mathcal{L}'$-formula that defines on every $\partial_{\Omega,X_{p,\widehat{p}}}(k,A)$ the equivalence relation
$$(f_1,\widehat{f}_1,\lambda_1,\xi_1,\widehat{\xi}_1)\sim (f_2,\widehat{f}_2,\lambda_2,\xi_2,\widehat{\xi}_2)\ \Leftrightarrow (f_1,\widehat{f_1},\lambda_1)\sim (f_2,\widehat{f_2},\lambda_2), \ (f_1,\xi_1)\sim (f_2,\xi_2) \text{ and } (\widehat{f}_1,\widehat{\xi}_1)\sim (\widehat{f}_2,\widehat{\xi}_2).$$
Then the map $(f,\widehat{f},\lambda,\xi,\widehat{\xi})\mapsto (b(f),b(\widehat{f}),\psi_{(f,\widehat{f},\lambda)},\underline{v}_f\xi, \underline{v}_{\widehat{f}}\widehat{\xi})$ induces a bijection between $\partial_{\Omega,X_{p,\widehat{p}}}(k,A)/\sim$ and $X_{p,\widehat{p}}(k,A)$.

\medskip

This concludes the definition of the equivalence formulas for $\Omega$. Note that for every model $(k,A)$ of $T'$ we have a bijection between $(\partial_{\Omega,s}(k,A)/\sim)_{s\in S}$ (where $S$ denotes the set of sorts for $\mathcal{L}$) and $\M(k,A)$. Using this bijection, the interpretation of the symbols of $\mathcal{L}$ in $\M(k,A)$ gives rise to an interpretation of the symbols of $\mathcal{L}$ in $(\partial_{\Omega,s}(k,A)/\sim)_{s\in S}$. It is now straight forward to check that these interpretations can be defined (uniformly in $(k,A)$) by appropriate  $\mathcal{L}'$-formulas. This completes the definition of $\Omega$. Note that $\Omega(k,A)\simeq \M(k,A)$ for every model $(k,A)$ of $T'$.

It is clear that $\Omega$ is a left total interpretation of $T$ in $T'$. Moreover, $\Xi(\Omega(k,A))\simeq (k,A)$ for every model $(k,A)$ of $T'$.

For a model $\M=(k,B_p,X_p,B_{p,q},X_{p,q})$ of $T$, let us consider $B_{(1,1)}$ as an abelian group (via the identification of $B_{(1,1)}$ with the character group of $G(\M)$ as in the definition of $\Xi$). Then $\M\simeq \M(k,B_{(1,1)})$ because $G(\M)$ and $G(\M(k,B_{(1,1)}))$ are both isomorphic to $D(B_{(1,1)})_k$.
Moreover, $\Omega(\Xi(\M))=\Omega(k,B_{(1,1)})\simeq \M(k,B_{(1,1)})$. Thus $\Omega(\Xi(\M))\simeq \M$ as desired.
%
%
\end{proof}

We note that the above isomorphism $\Omega(\Xi(\M))\simeq \M$ is not canonical. For example, on the $p$-total objects sort we need a bijection between $\partial_{\Omega,X_p}(k,B_{(1,1)})/\sim=(F_p(k,B_{(1,1)})\times k^n)/
\sim$ and $X_p$. Specifying such a bijection involves the choice of appropriate bases. This is why we have weak bi-interpretability rather than bi-interpretability in Theorem \ref{theo: weak bi-interpretability for diagonalizable}.

In the course of the proof of Theorem \ref{theo: weak bi-interpretability for diagonalizable} we have seen the following:
\begin{cor} \label{cor: interpret M in A}
	Let $\M$ be a model of $\PROALG$ such that $G(\M)$ is diagonalizable. Then $\M$ is interpretable in $(k,A)$, where $k$ is the field sort of $\M$ and $A$ the character group of $G(\M)$. \qed
\end{cor}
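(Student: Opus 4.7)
The plan is to directly invoke the interpretation $\Omega$ constructed in the proof of Theorem \ref{theo: weak bi-interpretability for diagonalizable}. Recall that $\Omega$ was defined as a left total interpretation of the theory of diagonalizable proalgebraic groups in the two-sorted theory of pairs $(k,A)$, and, crucially, it satisfies $\Omega(k,A)\simeq \M(k,A)$ for every pair consisting of a field $k$ and an abelian group $A$. Thus to prove the corollary, I essentially have to show that when $A$ is chosen to be the character group of $G(\M)$, the canonical model $\M(k,A)$ is isomorphic to $\M$.

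First I would observe that, since $G(\M)$ is diagonalizable by hypothesis, there exists an abelian group $A'$ with $G(\M)\simeq D(A')_k$, and the equivalence of categories $A\rightsquigarrow D(A)_k$ (recalled at the start of the section) identifies $A'$ with the character group $A=\chi(G(\M))$. Since $\M(k,A)$ was constructed precisely so that $G(\M(k,A))\simeq D(A)_k$, we conclude that $G(\M)$ and $G(\M(k,A))$ are isomorphic as proalgebraic groups over $k$. Then, by Corollary \ref{cor: functor from PROALG to PRO}, which asserts that the functor $\M\rightsquigarrow G(\M)$ induces a bijection on isomorphism classes, we get $\M\simeq \M(k,A)$.

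Combining this with $\Omega(k,A)\simeq \M(k,A)$ yields $\Omega(k,A)\simeq \M$, which is exactly the assertion that $\M$ is interpretable in the two-sorted structure $(k,A)$ via the formulas comprising $\Omega$. The only subtlety worth emphasizing is that the identification of $A$ with the sort $B_{(1,1)}$ of $\M$ as an abelian group (used implicitly above) is precisely the content of Lemma \ref{lemma: interpret character group}, so the character group really can be read off from $\M$ in a first-order fashion; this makes the statement meaningful. No further construction is required beyond what was already done in the proof of Theorem \ref{theo: weak bi-interpretability for diagonalizable}, so the ``proof'' is essentially an observation extracted from that argument.
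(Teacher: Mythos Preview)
Your proposal is correct and follows essentially the same route as the paper: the corollary is stated immediately after Theorem~\ref{theo: weak bi-interpretability for diagonalizable} with a \qed, and the paper's justification is precisely the observation (made at the end of that proof) that $\M\simeq\M(k,B_{(1,1)})\simeq\Omega(k,B_{(1,1)})$, using Corollary~\ref{cor: functor from PROALG to PRO} for the first isomorphism. Your write-up spells out the same steps in slightly more detail, including the identification of $B_{(1,1)}$ with $\chi(G(\M))$ via Lemma~\ref{lemma: interpret character group}.
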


Based on Theorem \ref{theo: weak bi-interpretability for diagonalizable} we can now characterize elementary equivalence and elementary extensions for diagonalizable proalgebraic groups.

\begin{cor}
	Let $k$ be a field and $G$ a diagonalizable proalgebraic group over $k$. Then a proalgebraic group $G'$ over a field $k'$ is elementarily equivalent to $G$ if and only if $k'$ is elementarily equivalent to $k$, $G'$ is diagonalizable and $\chi(G')$ is elementarily equivalent to $\chi(G)$.
\end{cor}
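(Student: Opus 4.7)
The plan is to derive this directly from Theorem~\ref{theo: weak bi-interpretability for diagonalizable} together with two elementary model-theoretic observations. The first is that interpretations transport elementary equivalence: whenever $\Xi$ interprets an $\mathcal{L}''$-theory in an $\mathcal{L}'$-theory, each $\mathcal{L}''$-sentence $\varphi$ translates to an $\mathcal{L}'$-sentence $\varphi_{\Xi}$ with $\Xi(N)\models\varphi$ iff $N\models\varphi_{\Xi}$, so $N\equiv N'$ implies $\Xi(N)\equiv\Xi(N')$. The second is a Feferman--Vaught style remark: in the two-sorted language $\mathcal{L}'$ of Theorem~\ref{theo: weak bi-interpretability for diagonalizable}, no symbol connects the field sort to the group sort, so a routine induction on formulas (pushing quantifiers past each sort independently) yields $(k,A)\equiv (k',A')$ in $\mathcal{L}'$ if and only if $k\equiv k'$ as fields and $A\equiv A'$ as abelian groups.

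For the forward direction, let $\M,\M'$ be models of $\PROALG$ associated to $G,G'$, and assume $G\equiv G'$, i.e.\ $\M\equiv \M'$. The class of diagonalizable proalgebraic groups is elementary by Proposition~\ref{prop: elementary classes}, hence $G'$ is diagonalizable. Applying the interpretation $\Xi$ constructed in the proof of Theorem~\ref{theo: weak bi-interpretability for diagonalizable}, which sends $\M$ to the pair consisting of its field sort and its character group $B_{(1,1)}\simeq \chi(G)$, gives $(k,\chi(G))\equiv (k',\chi(G'))$ in $\mathcal{L}'$. By the disjoint-sort remark, this yields $k\equiv k'$ and $\chi(G)\equiv\chi(G')$.

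For the converse, assume $G'$ is diagonalizable, $k\equiv k'$ and $\chi(G)\equiv \chi(G')$. Combining these via the disjoint-sort remark, $(k,\chi(G))\equiv(k',\chi(G'))$ in $\mathcal{L}'$. Applying the interpretation $\Omega$ of Theorem~\ref{theo: weak bi-interpretability for diagonalizable} we obtain $\Omega(k,\chi(G))\equiv \Omega(k',\chi(G'))$. By construction $\Omega(k,A)\simeq \M(k,A)$ and $G(\M(k,A))\simeq D(A)_k$, so these are models of $\PROALG$ whose associated proalgebraic groups are $D(\chi(G))_k\simeq G$ and $D(\chi(G'))_{k'}\simeq G'$ respectively. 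By the well-definedness of the theory of a proalgebraic group (Corollary~\ref{cor: functor from PROALG to PRO}) we conclude $G\equiv G'$. The only point requiring any care is the disjoint-sort observation; everything else is immediate from the weak bi-interpretability and the fact that diagonalizability is first-order.
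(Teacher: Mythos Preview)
Your proof is correct and follows essentially the same route as the paper: both directions are obtained by pushing elementary equivalence through the interpretations $\Xi$ and $\Omega$ of Theorem~\ref{theo: weak bi-interpretability for diagonalizable}, together with the elementarity of diagonalizability. The only difference is that you make explicit the disjoint-sort (Feferman--Vaught) step justifying $(k,A)\equiv(k',A')\Leftrightarrow k\equiv k'\ \text{and}\ A\equiv A'$, which the paper uses tacitly when it writes ``Then also $(k',\chi(G'))\equiv(k,\chi(G))$''; this is a welcome clarification rather than a genuinely different argument.
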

\begin{proof}
	First assume that $G'\equiv G$. Then clearly $k'\equiv k$. Moreover, we know from Proposition \ref{prop: elementary classes} that $G'$ must be diagonalizable. Let $\M=(k,B_p,X_p,B_{p,q},X_{p,q})$ and $\M'=(k',B'_p,X'_p,B'_{p,q},X'_{p,q})$ be models of $\PROALG$ such that $G(\M)\simeq G$ and $G(\M')\simeq G'$. Since interpretations preserve elementary equivalence we see that $\Xi(\M')\equiv\Xi(\M)$, where $\Xi$ is defined as in the proof of Theorem~\ref{theo: weak bi-interpretability for diagonalizable}. So $(k',B'_{(1,1)})\equiv (k,B_{(1,1)})$. Since $B'_{(1,1)}$ and $B_{(1,1)}$ are isomorphic with $\chi(G')$ and $\chi(G)$ respectively, we see that $\chi(G)\equiv\chi(G')$.
	
	Conversely, assume now that $G'$ is diagonalizable, $k'\equiv k$ and $\chi(G')\equiv \chi(G)$. Then also $(k',\chi(G'))\equiv (k,\chi(G))$. Therefore $\Omega(k',\chi(G'))\equiv\Omega(k,\chi(G))$, where $\Omega$ is defined as in the proof of Theorem \ref{theo: weak bi-interpretability for diagonalizable}. But $\M'\simeq \Omega(k',\chi(G'))$ and $\M\simeq \Omega(k,\chi(G))$. Thus $\M'\equiv \M$, i.e., $G'\equiv G$.
\end{proof}

In particular, for a field $k$, a proalgebraic group $G$ over $k$ is elementarily equivalent to the multiplicative group $\Gm$ over $k$ if and only if $G$ is isomorphic to $D(A)_k$ and $A$ is elementarily equivalent to $(\mathbb{Z}, +)$. Since $\Th(\mathbb{Z},+)$ has models that are not finitely generated as abelian groups (e.g., $\mathbb{Z}\oplus\mathbb{Q}$, see \cite{EklofFischer:TheElementaryTheoryOfAbelianGroups}) and $D(A)_k$ is algebraic if and only if $A$ is finitely generated we obtain:

\begin{cor} \label{cor: algebraic not elementary}
	The class of all algebraic groups is not elementary. \qed
\end{cor}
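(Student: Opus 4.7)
The plan is to exhibit a concrete algebraic group whose elementary class already contains a non-algebraic member, so that the property ``being algebraic'' is not preserved under elementary equivalence.

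First I would take $G=\Gm$ over a field $k$, which is an algebraic (in fact, one-dimensional) group. By the preceding corollary characterizing elementary equivalence of diagonalizable proalgebraic groups, any proalgebraic group $G'$ with $G'\equiv \Gm$ must itself be diagonalizable, say $G'\simeq D(A)_{k'}$, with $k'\equiv k$ and $A\equiv(\mathbb{Z},+)$. Conversely, for any abelian group $A$ with $A\equiv(\mathbb{Z},+)$, the corollary yields $D(A)_k\equiv \Gm$.

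Next I would invoke the two algebraic facts already cited in the paragraph preceding the corollary: (a) by \cite{EklofFischer:TheElementaryTheoryOfAbelianGroups}, the theory $\Th(\mathbb{Z},+)$ has models that are not finitely generated as abelian groups, such as $A=\mathbb{Z}\oplus \mathbb{Q}$; and (b) a diagonalizable proalgebraic group $D(A)_k$ is algebraic (i.e., of finite type over $k$) if and only if its character group $A$ is finitely generated. Together, picking such an $A$ produces $D(A)_k\equiv \Gm$ with $D(A)_k$ not algebraic.

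Finally, I would conclude by contradiction: if the class of algebraic groups were elementary, then the set of $\mathcal{L}$-sentences axiomatizing it would hold in every model $\M$ of $\PROALG$ with $G(\M)\simeq \Gm$, hence, by elementary equivalence, also in every $\M'$ of $\PROALG$ with $G(\M')\simeq D(A)_k$; but $D(A)_k$ is not algebraic, a contradiction. There is no real obstacle here — the work has all been done in the previous corollary and in the structure theory of diagonalizable groups recalled at the start of the section — the step to watch is merely checking that the chosen $A$ really is a model of $\Th(\mathbb{Z},+)$ that is not finitely generated, which is exactly the content of the reference to \cite{EklofFischer:TheElementaryTheoryOfAbelianGroups}.
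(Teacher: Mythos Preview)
Your proposal is correct and follows essentially the same argument as the paper: the paragraph immediately preceding the corollary already notes that $G\equiv\Gm$ iff $G\simeq D(A)_k$ with $A\equiv(\mathbb{Z},+)$, cites $\mathbb{Z}\oplus\mathbb{Q}$ as a non-finitely-generated model of $\Th(\mathbb{Z},+)$, and uses that $D(A)_k$ is algebraic iff $A$ is finitely generated. The corollary in the paper is stated with an immediate \qed, and your write-up simply spells out this reasoning.
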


\begin{cor}
	Let $\M$ be a model of $\PROALG$ such that $G(\M)$ is diagonalizable and let $A$ denote the character group of $G(\M)$. If $\M' \succeq\M$ is an elementary extension of $\M$, then $G(\M')\simeq D(A')_{k'}$, where $k'\succeq k$ and $A'\succeq A$.
	
	Conversely, if $k'\succeq k$ and $A'\succeq A$ are elementary extensions, then there exists an elementary extension $\M'\succeq \M$ such that $G(\M')\simeq D(A')_{k'}$.
\end{cor}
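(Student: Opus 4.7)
The plan is to reduce both directions to the weak bi-interpretability established in Theorem \ref{theo: weak bi-interpretability for diagonalizable}. The key observation I will use is that both interpretations $\Xi$ (from $\PROALG$ to the two-sorted theory of pairs $(k,A)$) and $\Omega$ (in the other direction) are given by fixed tuples of formulas, so they send elementary embeddings to elementary embeddings; this is a standard fact about interpretations and requires no additional work beyond checking that every domain formula and equivalence formula used in the proof is indeed a fixed $\mathcal{L}$- or $\mathcal{L}'$-formula (which is the case).

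For the forward direction, I would start from $\M'\succeq \M$. First I note that ``$G$ is diagonalizable'' is an elementary property by Proposition \ref{prop: elementary classes}, so $G(\M')$ is again diagonalizable. Applying the interpretation $\Xi$ yields $\Xi(\M')\succeq \Xi(\M)$ in the language $\mathcal{L}'$. By the construction of $\Xi$ in the proof of Theorem \ref{theo: weak bi-interpretability for diagonalizable}, $\Xi(\M)=(k,B_{(1,1)})$ and $\Xi(\M')=(k',B'_{(1,1)})$, with $B_{(1,1)}$ (resp.\ $B'_{(1,1)}$) identified with the character group of $G(\M)$ (resp.\ $G(\M')$) via Lemma \ref{lemma: interpret character group}. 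Thus $k'\succeq k$ and, writing $A'=\chi(G(\M'))$, also $A'\succeq A$. Since $G(\M')$ is diagonalizable with character group $A'$, the equivalence of categories recalled at the beginning of Section 3 gives $G(\M')\simeq D(A')_{k'}$.

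For the converse, I would apply $\Omega$ to the elementary extension $(k',A')\succeq (k,A)$ in $\mathcal{L}'$ to obtain $\Omega(k',A')\succeq \Omega(k,A)$ in $\mathcal{L}$. From the proof of Theorem \ref{theo: weak bi-interpretability for diagonalizable} we have $\Omega(k,A)\simeq \M(k,A)$ and $\Omega(k',A')\simeq \M(k',A')$, and by the construction of $\M(k,A)$ together with Corollary \ref{cor: functor from PROALG to PRO} (bijection on isomorphism classes), $\M(k,A)\simeq \M$ since both have associated proalgebraic group $D(A)_k\simeq G(\M)$. I then transport the elementary extension $\Omega(k',A')\succeq \Omega(k,A)$ along this isomorphism to obtain an elementary extension $\M'\succeq \M$ with $\M'\simeq \M(k',A')$, whence $G(\M')\simeq G(\M(k',A'))=D(A')_{k'}$.

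The only genuinely delicate point is the bookkeeping in the converse direction, namely that I must compose the elementary extension produced by $\Omega$ with the (non-canonical) isomorphism $\M(k,A)\simeq \M$ that exists by Corollary \ref{cor: functor from PROALG to PRO}; since elementary embeddings pull back along isomorphisms in the obvious way, this causes no real trouble. Everything else is a formality given that $\Xi$ and $\Omega$ are uniform interpretations and that the diagonalizable proalgebraic group $D(-)_-$ functor converts the data $(k',A')$ faithfully into $G(\M')$.
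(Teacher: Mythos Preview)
Your proposal is correct and follows essentially the same route as the paper: apply $\Xi$ to pass from $\M'\succeq\M$ to $(k',A')\succeq(k,A)$ for the forward direction, and apply $\Omega$ together with the isomorphism $\Omega(k,A)\simeq\M$ for the converse. The paper's proof is slightly terser but the logical content is identical, including the appeal to Proposition~\ref{prop: elementary classes} and to the fact that interpretations preserve elementary embeddings.
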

\begin{proof}
	Again, let $\Xi$ and $\Omega$ be the interpretations defined in the proof of Theorem \ref{theo: weak bi-interpretability for diagonalizable}. We observe that $G(\M')$ is diagonalizable by Proposition \ref{prop: elementary classes}. Let $A'$ denote the character group of $G(\M')$. Since interpretations preserve elementary embeddings (cf. \cite[Theorem 5.3.4]{Hodges:ModelTheory} for the one-sorted case) we see that $\Xi(\M')\succeq \Xi(\M)$, i.e., $(k',A')\succeq (k,A)$. It follows that $k'\succeq k$ and $A'\succeq A$.

	Conversely, assume we start with elementary extensions $k'\succeq k$ and $A'\succeq A$. Then $(k',A')\succeq (k,A)$ and $\Omega(k',A')\succeq\Omega(k,A)$. As $\Omega(k,A)\simeq \M$ and $G(\Omega(k',A'))\simeq D(A')_{k'}$ the claim follows.	
\end{proof}

\begin{cor}
	Let $G$ be a diagonalizable proalgebraic group over an algebraically closed field $k$. Then $\Th(G)$ is stable, but not necessarily superstable. 
\end{cor}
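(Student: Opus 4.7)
The plan is to leverage the interpretability results already established to reduce stability questions about $\Th(G)$ to stability questions about the two-sorted structure $(k, \chi(G))$. Specifically, by Corollary~\ref{cor: interpret M in A}, a model $\M$ of $\PROALG$ with $G(\M) \simeq G$ is interpretable in $(k, A)$, where $A = \chi(G)$. Since interpretability preserves stability, it is enough to show that $\Th(k, A)$ is stable.

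For the positive part, I would invoke two classical facts: first, the theory of an algebraically closed field is $\omega$-stable (hence stable); second, every complete theory of an abelian group is stable (see e.g.\ \cite{EklofFischer:TheElementaryTheoryOfAbelianGroups}). Because the language of the pair $(k, A)$ contains no symbol linking the two sorts, $\Th(k,A)$ is stable as soon as both $\Th(k)$ and $\Th(A)$ are: a routine type-counting argument shows that complete types over $B = B_k \cup B_A$ in $(k,A)$ decompose as independent pairs of types over $B_k$ in $k$ and over $B_A$ in $A$, so the number of types grows as the product of the bounds in each sort. This yields stability of $\Th(k,A)$, and therefore of $\Th(\M) = \Th(G)$.

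For the non-superstability claim, I would run the interpretation in the opposite direction: by Lemma~\ref{lemma: interpret character group}, $A$ is interpretable in $\M$, so if $\M$ were superstable then $\Th(A)$ would be superstable as well. It thus suffices to exhibit a complete theory of abelian groups that is stable but not superstable, and then set $G = D(A)_k$. A standard example is $A = \bigoplus_{n \geq 1} \mathbb{Z}/p^n\mathbb{Z}$ for a fixed prime $p$: the definable subgroups $p^m A$ form a strictly decreasing chain whose successive quotients are all infinite, so the Szmielew invariants computing superstability are unbounded while those controlling stability remain finite. The resulting $G = D(A)_k$ is then a diagonalizable proalgebraic group over the algebraically closed field $k$ whose theory is stable (by the first paragraph) but not superstable.

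The main point requiring care is the preservation of stability under interpretation in our many-sorted setting — the argument is a direct adaptation of the one-sorted case (cf.\ \cite[Chapter~5]{Hodges:ModelTheory}) but should be checked in the language with its large number of object and morphism sorts. Apart from that, the proof reduces to quoting the classification of stable and superstable abelian groups. No genuinely new model-theoretic work is needed; the content lies entirely in the bi-interpretability packaged by Theorem~\ref{theo: weak bi-interpretability for diagonalizable}.
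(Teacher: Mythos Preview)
Your proposal is correct and follows essentially the same approach as the paper: both use Corollary~\ref{cor: interpret M in A} to transfer stability from $(k,A)$ to $\M$, and both use Lemma~\ref{lemma: interpret character group} to transfer non-superstability from $A$ back to $\M$. The only difference is the choice of non-superstable abelian group---the paper uses an infinite direct sum of copies of $\mathbb{Z}$ (citing \cite[Theorem A.2.13]{Hodges:ModelTheory}) where you use $\bigoplus_{n\geq 1}\mathbb{Z}/p^n\mathbb{Z}$; both examples work for the same reason (an infinite strictly descending chain of definable subgroups of infinite index).
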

\begin{proof}
	Let $\M$ be a model of $\PROALG$ such that $G(\M)\simeq G$ and let $A$ denote the character group of $G(\M)$. Since $\Th(k)$ is stable and $\Th(A)$ is stable, it follows that also $\Th(k,A)$ is stable. As $\M$ can be interpreted in $(k,A)$ by Corollary \ref{cor: interpret M in A} it follows that $\Th(\M)$ is stable.
	
	There are abelian groups whose theory is not superstable (e.g., an infinite direct sum of copies of $\mathbb{Z}$, see \cite[Theorem A.2.13]{Hodges:ModelTheory}). Since these can be interpreted in a model $\M$ of $\PROALG$ with $G(\M)$ diagonalizable, it follows that $\Th(\M)$ cannot be superstable.
\end{proof}


\section{Types}

We postpone a more comprehensive study of types for certain models of $\PROALG$ to a future publication. Here we establish some initial algebraic results that illustrate the expressive power of $\PROALG$: 
\begin{itemize}
	\item For a model $\M=(k, B_p, X_p, B_{p,q}, X_{p,q})$ of $\PROALG$ the type of an element $b\in B_p$ over the empty set determines the minimal degree of defining equations of the image of the representation $G(\M)\to \Gl_{\omega(b)}$ associated with $b$. 
	\item The type of $b$ over $k$ determines the image of $G(\M)\to \Gl_{\omega(b)}$.
\end{itemize}

The key to these results is the fact that the type of $b$ over $k$ knows which subspaces of representations of $G(\M)$ obtained from $\omega(b)$ by forming tensor products, duals and direct sums are $G(\M)$-stable (i.e., subrepresentations). Moreover, the image of $G(\M)\to \Gl_{\omega(b)}$ is determined by these stable subspaces.

\subsection{Stable subspaces and defining equations}

The results in this subsection are of a preparatory nature and purely algebraic, i.e., do not involve any model theory. It is well known that any closed subgroup $G$ of $\Gl_V$, for a finite dimensional vector space $V$, is the stabilizer of some subspace of a representation of $\Gl_V$ obtained from $V$ by forming tensor products, duals and direct sums. We will need to understand this result in full detail. In particular, we would like to know how the degree of the polynomials defining the stabilizer is related to the constructions (tensor product, duals, direct sums) applied to $V$. 

\medskip

Let $G$ be a closed subgroup of $\Gl_n$. Then the defining ideal $\I(G)$ of $G$ is a Hopf ideal of the Hopf algebra $k[\Gl_n]=k[Z,1/\det(Z)]=k[Z,Z^{-1}]$, where $Z$ is an $n\times n$ matrix of indeterminates. For every $d\geq 0$ let $k[Z,Z^{-1}]_{\leq d}$ denote the finite dimensional $k$-subspace of $k[Z,Z^{-1}]$ consisting of all elements of the form $P(Z,Z^{-1})$, where $P$ is a polynomial over $k$ in $2n^2$ variables of degree at most $d$.

\begin{lemma} \label{lemma: is subcoalgebra} 
	The subspace $k[Z,Z^{-1}]_{\leq d}$ is a subcoalgebra of $k[Z,Z^{-1}]$.
\end{lemma}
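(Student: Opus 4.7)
The plan is to verify directly that the comultiplication on $k[\Gl_n]$ preserves the filtration by total degree in the entries of $Z$ and $Z^{-1}$. Writing $W=Z^{-1}$, so that $k[\Gl_n]=k[Z,W]/(ZW-I,WZ-I)$, I would present $k[Z,Z^{-1}]_{\leq d}$ as the image of the subspace of polynomials in $k[Z,W]$ of total degree at most $d$ in the $2n^2$ entries. The subcoalgebra property amounts to the inclusion $\Delta(k[Z,Z^{-1}]_{\leq d})\subseteq k[Z,Z^{-1}]_{\leq d}\otimes k[Z,Z^{-1}]_{\leq d}$, and this is what I would establish.

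The key computation is on generators. Matrix multiplication is the group operation on $\Gl_n$, so $\Delta(Z_{ij})=\sum_k Z_{ik}\otimes Z_{kj}$, and from the identity $(gh)^{-1}=h^{-1}g^{-1}$ (equivalently, from compatibility with the antipode $S(Z_{ij})=W_{ij}$), one gets $\Delta(W_{ij})=\sum_k W_{kj}\otimes W_{ik}$. In both formulas a single generator is sent to a sum of pure tensors in which each tensor factor is itself of total degree $1$ in the entries of $Z$ or of $W$. So $\Delta$ visibly respects degree on generators.

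Since $\Delta$ is a $k$-algebra homomorphism, applying it to a monomial of total degree $d$ in the entries of $Z$ and $W$ yields a sum of pure tensors in each of whose factors the total degree in these entries is still at most $d$. Extending by $k$-linearity, any element of $k[Z,Z^{-1}]_{\leq d}$ is sent by $\Delta$ into $k[Z,Z^{-1}]_{\leq d}\otimes k[Z,Z^{-1}]_{\leq d}$, as required.

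The only mild obstacle I anticipate is bookkeeping around the fact that an element of $k[\Gl_n]$ does not have a unique representative polynomial in $k[Z,W]$, because of the relations $ZW-I$ and $WZ-I$. But $k[Z,Z^{-1}]_{\leq d}$ is by definition the set of elements admitting \emph{some} polynomial representation of degree $\leq d$, so it suffices to exhibit, for a given element $f$ with a fixed representative of degree $\leq d$, a representative of $\Delta(f)$ of degree $\leq d$ in each tensor factor; the substitution procedure above does exactly this. No further issues arise, and the proof should be quite short.
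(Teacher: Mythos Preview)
Your proposal is correct and is essentially the same argument as the paper's: compute $\Delta$ on the entries of $Z$ and of $Z^{-1}$, observe each lands in degree~$1$ on both tensor factors, and then use that $\Delta$ is a $k$-algebra homomorphism to conclude for arbitrary $P(Z,Z^{-1})$ of degree $\leq d$. Your extra remark about non-unique representatives is a harmless elaboration the paper omits.
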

\begin{proof}
	Let $\Delta\colon k[Z,Z^{-1}]\to k[Z,Z^{-1}]\otimes_k k[Z,Z^{-1}],\ Z\mapsto Z\otimes Z$ denote the comultiplication. Here $Z\otimes Z$ is the $n\times n$ matrix whose $ij$-entry is $\sum_{\ell=1}^nZ_{i\ell}\otimes Z_{\ell j}$. In other words, $Z\otimes Z$ is the (matrix) product of the  matrices $Z\otimes 1$ and $1\otimes Z$, where $Z\otimes 1=(Z_{i,j}\otimes 1)_{1\leq i,j\leq n}\in   (k[Z,Z^{-1}]\otimes_k k[Z,Z^{-1}])^{n\times n}$ and $1\otimes Z=(1\otimes Z_{i,j})_{1\leq i,j\leq n}\in   (k[Z,Z^{-1}]\otimes_k k[Z,Z^{-1}])^{n\times n}$. We have $\Delta(Z^{-1})=\Delta(Z)^{-1}=((Z\otimes 1) (1\otimes Z))^{-1}=(1\otimes Z)^{-1}(Z\otimes 1)^{-1}=(1\otimes Z^{-1}) (Z^{-1}\otimes 1)$. So $(\Delta(Z^{-1}))_{ij}=\sum_{\ell=1}^{n}(Z^{-1})_{\ell j}\otimes (Z^{-1})_{i\ell}$. Consequently, if $P$ is a polynomial of degree at most $d$, then $\Delta(P(Z,Z^{-1}))=P(\Delta(Z),\Delta(Z^{-1}))\in k[Z,Z^{-1}]_{\leq d}\otimes_k k[Z,Z^{-1}]_{\leq d}$.
\end{proof}

\begin{lemma} \label{lemma: is Hopf ideal}
	The ideal of $k[\Gl_n]=k[Z,Z^{-1}]$ generated by $\I(G)\cap k[Z,Z^{-1}]_{\leq d}$ is a Hopf ideal. 
\end{lemma}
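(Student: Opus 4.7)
The goal is to verify that $J := (I)$, the ideal generated by $I := \I(G) \cap k[Z,Z^{-1}]_{\leq d}$, satisfies the three defining conditions of a Hopf ideal: $\varepsilon(J) = 0$, $S(J) \subseteq J$, and $\Delta(J) \subseteq J \otimes k[Z,Z^{-1}] + k[Z,Z^{-1}] \otimes J$. Writing $V = k[Z,Z^{-1}]_{\leq d}$, since $\varepsilon$ and $\Delta$ are algebra homomorphisms and $S$ is an anti-algebra homomorphism, it will suffice to establish each property on the generating set $I$ rather than on all of $J$.

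The first two properties are easy. For the counit, $\varepsilon(I) \subseteq \varepsilon(\I(G)) = 0$ since $\I(G)$ is a Hopf ideal. For the antipode, I observe that $S$ interchanges the entries of $Z$ and $Z^{-1}$ (namely $S(Z_{ij}) = (Z^{-1})_{ij}$), so $S(P(Z, Z^{-1})) = P(Z^{-1}, Z)$ retains the same polynomial degree; hence $S(V) \subseteq V$. Combined with $S(\I(G)) \subseteq \I(G)$, this yields $S(I) \subseteq I \subseteq J$.

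The main step is the comultiplication condition, which reduces to showing $\Delta(I) \subseteq I \otimes V + V \otimes I$. Here I have two containments at my disposal: Lemma \ref{lemma: is subcoalgebra} gives $\Delta(I) \subseteq V \otimes V$, while the Hopf ideal property of $\I(G)$ gives $\Delta(I) \subseteq \I(G) \otimes k[Z,Z^{-1}] + k[Z,Z^{-1}] \otimes \I(G)$. The plan is to show that the intersection of these two subspaces equals $I \otimes V + V \otimes I$. To do so, I pick a vector space complement $U$ of $I$ inside $V$, and then extend $U$ to a complement $Q$ of $\I(G)$ inside $k[Z,Z^{-1}]$; this is possible because $U \cap \I(G) \subseteq V \cap \I(G) = I$ and $U \cap I = 0$. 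Then $V \otimes V$ decomposes as the direct sum of $I \otimes I$, $I \otimes U$, $U \otimes I$ and $U \otimes U$. The first three summands lie in $\I(G) \otimes k[Z,Z^{-1}] + k[Z,Z^{-1}] \otimes \I(G)$, while the last sits inside $Q \otimes Q$, which is disjoint from this sum. This yields the intersection claim, and the Hopf ideal property then transfers from $I$ to $J$ by using that $\Delta$ is multiplicative.

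The main obstacle is reconciling the two containments of $\Delta(I)$ — one from the degree bound, one from $\I(G)$ being a Hopf ideal — into a single containment whose right-hand side is generated by elements of $I$ only. Neither containment on its own suffices, and the key technical maneuver is the simultaneous choice of the complements $U \subseteq Q$ adapted to both $V$ and $\I(G)$.
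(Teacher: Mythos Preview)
Your proof is correct. The overall architecture matches the paper's: verify the counit and antipode conditions directly on the generating set $I = \I(G)\cap V$, show that $I$ is a coideal, and then pass to the ideal $J$ it generates. The difference lies in how the coideal property $\Delta(I)\subseteq I\otimes V + V\otimes I$ is established. The paper argues abstractly: the inclusion $V\hookrightarrow k[Z,Z^{-1}]\twoheadrightarrow k[Z,Z^{-1}]/\I(G)$ is a coalgebra morphism with kernel $I$, and kernels of coalgebra morphisms are coideals; it then invokes the general fact that in a commutative Hopf algebra the ideal generated by a coideal is again a coideal. You instead compute the intersection $(V\otimes V)\cap\bigl(\I(G)\otimes H + H\otimes\I(G)\bigr)$ explicitly by choosing compatible complements $U\subseteq Q$. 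Your route is more elementary and self-contained (no appeal to general coalgebra facts), while the paper's is shorter and highlights the conceptual reason the intersection is a coideal. Either approach is fine here.
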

\begin{proof}
	If $C$ is a coalgebra with a coideal $V$ and a subcoalgebra $D$, then $V\cap D$ is a coideal of $D$, and so, in particular, a coideal of $C$. (To see this note that $D\to C/V$ is a morphism of coalgebras with kernel $V\cap D$ and kernels of morphisms of coalgebras are coideals.) It follows, using Lemma~\ref{lemma: is subcoalgebra}, that $\I(G)\cap k[Z,Z^{-1}]_{\leq d}$ is a coideal of $k[Z,Z^{-1}]$.

	Let $I=(\I(G)\cap k[Z,Z^{-1}]_{\leq d})$ denote the ideal of $k[Z,Z^{-1}]$ generated by $\I(G)\cap k[Z,Z^{-1}]_{\leq d}$. In any commutative Hopf algebra, the ideal generated by a coideal is a coideal. It follows that $I$ is a coideal.

	Thus it only remains to check that $I$ is stable under the antipode $S\colon k[Z,Z^{-1}]\to k[Z,Z^{-1}],\ Z\mapsto Z^{-1}$. However, since $k[Z,Z^{-1}]_{\leq d}$ and $\I(G)$ are stable under $S$ this is immediate.
\end{proof}

Let $V$ be a finite dimensional $k$-vector space. The choice of a basis of $V$ allows us to identify $\Gl_V$ with $\Gl_n$ and $k[\Gl_V]$ with $k[Z,Z^{-1}]$. We note that $k[Z,Z^{-1}]_{\leq d}$, considered as a subspace of $k[\Gl_V]$, does not depend on the choice of the basis and we may therefore safely denote it by $k[\Gl_V]_{\leq d}$.

The following notation will be useful: For a polynomial $P\in\mathbb{N}[X,Y]$ in two variables and a finite dimensional $k$-vector space $V$ we define $P(V,V^\vee)$ as the $k$-vector space obtained from $V$ and $P$ by replacing $X$ by $V$, $Y$ by the dual vector space $V^\vee$ of $V$, addition in $P$ by the direct sum of vector spaces and multiplication by the tensor product of vector spaces. The constant term of $P$ has to be interpreted as the appropriate direct sum of copies of $k$. Note that if $V$ is a representation of some proalgebraic group $G$, then $P(V,V^\vee)$ is also naturally a representation of $G$. (The constant term has to be interpreted as a trivial representation.)
The choice of a basis $v=(v_1,\ldots,v_n)$ of $V$ determines, for every $P\in \mathbb{N}[X,Y]$, a basis of $P(V,V^\vee)$, which we will call the \emph{$v$-canonical basis} of $P(V,V^\vee)$. It can be defined recursively as follows: 
\begin{itemize}
	\item The $v$-canonical basis of $V$ is $v$.
	\item The $v$-canonical basis of $V^\vee$ is the basis $v^\vee=(v_1^\vee,\ldots,v_n^\vee)$ dual to $v$.
	
	\item If $w_1,\ldots,w_m$ is the $v$-canonical basis of $W$ and $w'_1,\ldots,w'_{m'}$ is the $v$-canonical basis of $W'$, then $w_1,\ldots,w_m,w'_1,\ldots,w'_{m'}$ is the $v$-canonical basis of $W\oplus W'$.
	\item If $w_1,\ldots,w_m$ is the $v$-canonical basis of $W$ and $w'_1,\ldots,w'_{m'}$ is the $v$-canonical basis of $W'$, then $(w_i\otimes w'_j)_{1\leq i\leq m, 1\leq j\leq m'}$ is the $v$-canonical basis of $W\otimes_k W'$.
\end{itemize}	

Let $V$ be a not necessarily finite dimensional representation of a proalgebraic group $G$. For a $k$-algebra $R$, a subspace $W$ of $V$ is \emph{stable} under $g\in G(R)$ (or $g$ \emph{stabilizes} $W$) if $g\colon V\otimes_k R\to V\otimes_k R$ maps $W\otimes_k R$ into $W\otimes_k R$.  If $W$ is stable under all $g\in G(R)$ for all $R$, then $W$ is \emph{$G$-stable}. The subgroup of $G$ consisting of all $g$ that stabilize $W$ is a closed subgroup of $G$ called the \emph{stabilizer} of $W$.

For $n\geq 1$ and $d\geq 0$ we define 
$$P_d=\sum_{a+b\leq d}{ a+n-1 \choose a} { b+n-1 \choose b}X^aY^b\in \mathbb{N}[X,Y],$$
where the sum is taken over all pairs $(a,b)\in\mathbb{N}^2$ with $a+b\leq d$. The significance of this polynomial is explained in the following proposition.

\begin{prop} \label{prop: equivalence for degree}
	Let $V$ be a finite dimensional $k$-vector space and let $G$ be a closed subgroup of $\Gl_V$. For $d\geq 0$ the following closed subgroups of $\Gl_V$ are equal:
	\begin{enumerate}
		\item The subgroup of $\Gl_V$ defined by the ideal of $k[\Gl_V]$ generated by $\I(G)\cap k[\Gl_V]_{\leq d}$ (cf. Lemma~\ref{lemma: is Hopf ideal}).
		\item The subgroup of $\Gl_V$ consisting of all elements that stabilize all $G$-stable subspaces of $P(V,V^\vee)$ for all $P\in \mathbb{N}[X,Y]$ of degree at most $d$. 
		\item The subgroup of $\Gl_V$ consisting of all elements that stabilize all $G$-stable subspaces of $P_d(V,V^\vee)$.
	\end{enumerate}
Moreover, there exists a $G$-stable subspace $W$ of $P_d(V,V^\vee)$ such that the stabilizer of $W$ (in $\Gl_V$), agrees with the group defined in (i), (ii) and (iii).
\end{prop}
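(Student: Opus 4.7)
The plan is to prove the chain $(\mathrm{i})\subseteq(\mathrm{ii})\subseteq(\mathrm{iii})\subseteq(\mathrm{i})$, producing in the last step the explicit $G$-stable subspace $W\subseteq P_d(V,V^\vee)$ demanded by the ``moreover''. Write $H_d$ for the subgroup of $\Gl_V$ described in (i); its defining ideal is $(\I(G)\cap k[\Gl_V]_{\leq d})$. The inclusion $(\mathrm{ii})\subseteq(\mathrm{iii})$ is immediate, since $P_d$ itself has degree $d$.

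For $(\mathrm{i})\subseteq(\mathrm{ii})$, let $W\subseteq P(V,V^\vee)$ be $G$-stable with $\deg P\leq d$. Since $\Gl_V$ acts on each summand $V^{\otimes a}\otimes (V^\vee)^{\otimes b}$ by $g^{\otimes a}\otimes (g^{-1})^{\otimes b}$, the matrix entries of this action lie in $k[\Gl_V]_{\leq a+b}\subseteq k[\Gl_V]_{\leq d}$. After choosing bases, the condition that $g$ preserves $W$ is cut out by polynomial equations in $k[\Gl_V]_{\leq d}$; let $J_W$ denote the ideal of $k[\Gl_V]$ they generate, so that $\operatorname{Stab}(W)$ is the closed subscheme defined by $J_W$. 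As $G\subseteq \operatorname{Stab}(W)$ we have $J_W\subseteq \I(G)$, whence every generator of $J_W$ lies in $\I(G)\cap k[\Gl_V]_{\leq d}$ and $J_W\subseteq (\I(G)\cap k[\Gl_V]_{\leq d})$. This gives $H_d\subseteq\operatorname{Stab}(W)$, establishing $(\mathrm{i})\subseteq(\mathrm{ii})$.

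For $(\mathrm{iii})\subseteq(\mathrm{i})$ together with the single witness $W$, I would construct a $\Gl_V$-equivariant surjection $\pi\colon P_d(V,V^\vee)\twoheadrightarrow k[\Gl_V]_{\leq d}$, where $k[\Gl_V]_{\leq d}$ carries the right-translation action (well-defined by Lemma~\ref{lemma: is subcoalgebra}). The coefficient $\binom{a+n-1}{a}\binom{b+n-1}{b}$ of $X^aY^b$ in $P_d$ equals $\dim_k(\operatorname{Sym}^a V\otimes_k\operatorname{Sym}^b V^\vee)$, which dictates the recipe: on each of the $\binom{a+n-1}{a}\binom{b+n-1}{b}$ copies of $V^{\otimes a}\otimes (V^\vee)^{\otimes b}$ inside $P_d(V,V^\vee)$, first compose the canonical surjection $V^{\otimes a}\otimes (V^\vee)^{\otimes b}\twoheadrightarrow\operatorname{Sym}^a V\otimes_k \operatorname{Sym}^b V^\vee$ and then apply one of a fixed collection of matrix-coefficient maps $\operatorname{Sym}^a V\otimes_k \operatorname{Sym}^b V^\vee\to k[\Gl_V]$, chosen so that as the copy varies these run through a basis of $(\operatorname{Sym}^a V\otimes_k \operatorname{Sym}^b V^\vee)^\vee$. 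Summing over $a+b\leq d$ exhausts $k[\Gl_V]_{\leq d}$, which is spanned by monomials of total degree $\leq d$ in the $Z_{ij},(Z^{-1})_{ij}$, each arising as a matrix coefficient of some $\operatorname{Sym}^a V\otimes_k\operatorname{Sym}^b V^\vee$ with $a+b\leq d$. Now set $L=\I(G)\cap k[\Gl_V]_{\leq d}$ and $W=\pi^{-1}(L)$. Then $L$, hence $W$, is $G$-stable under right translation because $\I(G)$ is, and $\operatorname{Stab}_{\Gl_V}(W)=\operatorname{Stab}_{\Gl_V}(L)$ by equivariance and surjectivity of $\pi$. If $g\in\Gl_V(R)$ lies in $\operatorname{Stab}(L)$, then for $f\in L$ we have $R_g f\in L\otimes_k R$, and evaluating at the identity $e\in\Gl_V$ gives $f(g)=(R_g f)(e)\in L(e)\otimes_k R=0$; so $g$ kills $L$ and the ideal $(L)$ it generates, placing $g$ in $H_d$. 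Combined with $H_d\subseteq\operatorname{Stab}(W)$ from the previous paragraph this yields $\operatorname{Stab}(W)=H_d$, so $(\mathrm{iii})\subseteq\operatorname{Stab}(W)=H_d$, closing the chain.

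The main obstacle is the construction of $\pi$: identifying the coefficient space of $\operatorname{Sym}^a V\otimes_k\operatorname{Sym}^b V^\vee$ in $k[\Gl_V]$ with the bidegree-$(a,b)$ part in the $Z_{ij},(Z^{-1})_{ij}$, and checking that the multiplicities $\binom{a+n-1}{a}\binom{b+n-1}{b}$ appearing in $P_d$ are exactly what is needed to assemble these pieces $\Gl_V$-equivariantly into a surjection onto $k[\Gl_V]_{\leq d}$. Everything else reduces to degree bookkeeping together with Lemmas~\ref{lemma: is subcoalgebra} and~\ref{lemma: is Hopf ideal}.
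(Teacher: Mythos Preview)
Your overall architecture is the same as the paper's: establish $G_1\leq G_2\leq G_3$, build a $\Gl_V$-equivariant surjection $\pi\colon P_d(V,V^\vee)\twoheadrightarrow k[\Gl_V]_{\leq d}$ for the right-regular action, and take $W=\pi^{-1}\bigl(\I(G)\cap k[\Gl_V]_{\leq d}\bigr)$. Your argument for $(\mathrm{i})\subseteq(\mathrm{ii})$ is correct, and your endgame---evaluating $R_gf$ at the identity to show $\operatorname{Stab}(L)\subseteq H_d$---is a clean direct proof of what the paper obtains by citing Springer's lemma.

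The gap is in your construction of $\pi$: routing through $\operatorname{Sym}^aV\otimes\operatorname{Sym}^bV^\vee$ does \emph{not} produce a surjection onto $k[\Gl_V]_{\leq d}$. The span of all matrix coefficients of $\operatorname{Sym}^aV$ is strictly smaller than the space of degree-$a$ forms in the $Z_{ij}$ as soon as $n\geq 2$ and $a\geq 2$. Concretely, for $n=2$ the matrix of $g$ on $\operatorname{Sym}^2V$ has entries $Z_{11}^2,\,2Z_{11}Z_{21},\,Z_{21}^2,\,Z_{11}Z_{12},\,Z_{11}Z_{22}+Z_{12}Z_{21},\,Z_{21}Z_{22},\,Z_{12}^2,\,2Z_{12}Z_{22},\,Z_{22}^2$; these span a $9$-dimensional space, missing $Z_{11}Z_{22}$ and $Z_{12}Z_{21}$ individually. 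Precomposing with $V^{\otimes a}\twoheadrightarrow\operatorname{Sym}^aV$ cannot recover what the symmetrisation has already discarded. Without surjectivity you only get $\pi(W)=L\cap\operatorname{im}(\pi)$, and your evaluation-at-$e$ argument then only shows that $\operatorname{Stab}(W)$ is contained in the zero locus of $L\cap\operatorname{im}(\pi)$, not of $L$.

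The repair is to drop the symmetric powers and use matrix coefficients of $V^{\otimes a}\otimes(V^\vee)^{\otimes b}$ directly. Index the $\binom{a+n-1}{a}$ copies of $V^{\otimes a}$ by weakly increasing tuples $(i_1\leq\cdots\leq i_a)$ and on that copy use the functional $v_{i_1}^\vee\otimes\cdots\otimes v_{i_a}^\vee\in(V^{\otimes a})^\vee$; the resulting map $v_{j_1}\otimes\cdots\otimes v_{j_a}\mapsto Z_{i_1 j_1}\cdots Z_{i_a j_a}$ is $\Gl_V$-equivariant and does \emph{not} factor through $\operatorname{Sym}^aV$ when the $i_\ell$ are not all equal. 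Doing the analogous thing with the columns of $Z^{-1}$ for the $V^\vee$ factors and summing over $a+b\leq d$ hits every monomial of total degree $\leq d$. This is exactly the paper's construction, phrased there as the multiplication map out of $Q(V_1,\ldots,V_n,W_1,\ldots,W_n)$ where $V_i,W_j\subseteq k[Z,Z^{-1}]$ are the rows of $Z$ and columns of $Z^{-1}$.
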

\begin{proof}
	For $j=1,2,3$ let $G_j$ denote the group defined in point $j$ above. Clearly $G_2\leq G_3$. Fixing a basis $v=(v_1,\ldots,v_n)$ of $V$, we may identify $\Gl_V$ with $\Gl_n$.

	To show that $G_1\leq G_2$ let $P\in \mathbb{N}[X,Y]$ be a polynomial of degree at most $d$ and let $W$ be a subspace of $P(V,V^\vee)$. Let $u_1,\ldots,u_r$ denote the $v$-canonical basis of $P(V,V^\vee)$. Then there exist $P_{ij}\in k[Z,Z^{-1}]_{\leq d}$ such that $g(u_i)=\sum_{j=1}^rP_{ij}(g)u_j$ for all $g\in \Gl_n(R)$ and all $k$-algebras $R$. It follows that for any basis $w_1,\ldots,w_r$ of $P(V,V^\vee)$ there exist $Q_{ij}\in k[Z,Z^{-1}]_{\leq d}$ such that $g(w_i)=\sum_{j=1}^r Q_{ij}(g)w_j$ for all $g\in \Gl_n(R)$ and all $k$-algebras $R$. We may extend a basis $w_1,\ldots,w_s$ of $W$ to a basis $w_1,\ldots,w_s,w_{s+1},\ldots, w_r$ of $P(V,V^\vee)$. Then, using the above notation, an element $g\in\Gl_n(R)$ stabilizes $W$ if and only if $Q_{ij}(g)=0$ for all $i$ and $j>s$. Thus, an element $g\in \Gl_n(R)$ such that $Q(g)=0$ for all $Q\in \I(G)\cap k[Z,Z^{-1}]_{\leq d}$, will stabilize every subspace $W$ of $P(V,V^\vee)$ stabilized by $G$. Hence $G_1\leq G_2$.
	
	The most difficult part now is to show that $G_3\leq G_1$. This follows from a detailed analysis of the proofs of two basic theorems on representations of algebraic groups (Theorems 4.14 and 4.27 in \cite{Milne:AlgebraicGroupsTheTheoryOfGroupSchemesOfFiniteTypeOverAField}): We consider the regular representation of $\Gl_n$ on $k[Z,Z^{-1}]$. This can be defined as the not necessarily finite dimensional representation of $\Gl_n$ corresponding to the comodule $k[Z,Z^{-1}]$ with comodule map 	
	$\Delta\colon k[Z,Z^{-1}]\to k[Z,Z^{-1}]\otimes_k k[\Gl_n],\ Z\mapsto Z\otimes Z$. Explicitly, the action of an element $g\in\Gl_n(R)$ on $k[Z,Z^{-1}]\otimes_k R$ is given by $g(f(Z))=f(Zg)$ for $f\in k[Z,Z^{-1}]$. We note that $k[Z,Z^{-1}]_{\leq d}$ is a $\Gl_n$-stable subspace of $k[Z,Z^{-1}]$. For $i=1,\ldots,n$ let $V_i$ denote the $k$-subspace of $k[Z,Z^{-1}]$ generated by the $i$-th row of $Z$. Then $V_i$ is $\Gl_n$-stable. In fact, $V_i$ is isomorphic to $V$ as a $\Gl_n$-representation, under the isomorphism that identifies $Z_{ij}$ with $v_j$ for $j=1,\ldots,n$. Similarly, for $j=1,\ldots,n$ let $W_j$ denote the subspace of $k[Z,Z^{-1}]$ generated by the $j$-th column of $Z^{-1}$. Then $W_j$ is $\Gl_n$-stable and indeed is isomorphic to $V^\vee$ as a $\Gl_n$-representation, under the isomorphism that identifies $(Z^{-1})_{ij}$ with $v^\vee_i$ for $i=1,\ldots,n$. Let $$Q=\sum_{d_1+\ldots+d_n+e_1+\ldots+e_n\leq d}X_1^{d_1}\ldots X_n^{d_n}Y_1^{e_1}\ldots Y_n^{e_n}\in\mathbb{N}[X_1,\ldots,X_n,Y_1,\ldots,Y_n]$$
	be the full polynomial of degree $d$ all of whose coefficients are equal to $1$. We have a surjective map $Q(V_1,\ldots,V_n,W_1,\ldots,W_n)\to k[Z,Z^{-1}]_{\leq d}$ of $\Gl_n$-representations, where $Q(V_1,\ldots,V_n,W_1,\ldots,W_n)$ is defined in a fashion similar to the definition of $P(V,V^\vee)$ above. Since $V_i\simeq V$ and $W_j\simeq V^\vee$ this can be interpreted as a surjective map $\pi\colon P_d(V,V^\vee)\to k[Z,Z^{-1}]_{\leq d}$ of $\Gl_n$-representations, where $P_d=Q(X,\ldots,X,Y,\ldots,Y)=\sum_{a+b\leq d}{ a+n-1 \choose a} { b+n-1 \choose b}X^aY^b\in \mathbb{N}[X,Y]$. 
	
	Since $\I(G)\cap k[Z,Z^{-1}]_{\leq d}$ is a $G$-stable subspace of $k[Z,Z^{-1}]_{\leq d}$, it follows that
	$W=\pi^{-1}(\I(G)\cap k[Z,Z^{-1}]_{\leq d})$ is a $G$-stable subspace of $P_d(V,V^\vee)$. Thus, by the definition of $G_3$, the subspace $W$ is $G_3$-stable. Therefore $\pi(W)= \I(G)\cap k[Z,Z^{-1}]_{\leq d}$ is $G_3$-stable. From this we deduce that $(\I(G)\cap k[Z,Z^{-1}]_{\leq d})=\I(G_1)$ is $G_3$-stable. However, only elements of $G_1$ stabilize $\I(G_1)$ (cf. \cite{Springer:LinearAlgebraicGroups} Lemma~ 2.3.8). Therefore $G_3\leq G_1$ as desired.
	
	Finally, the $G$-stable subspace $W=\pi^{-1}(\I(G)\cap k[Z,Z^{-1}]_{\leq d})$ of $P_d(V,V^\vee)$ has the property required in the last statement of the proposition.
\end{proof}

Let $V$ be a finite dimensional $k$-vector space and let $G$ be a closed subgroup of $\Gl_V$. For every $d\geq 0$ let $$G_{\leq d}$$ denote the closed subgroup of $\Gl_V$ characterized in Proposition \ref{prop: equivalence for degree}. We then have a descending chain
$$\Gl_V=G_{\leq 0}\supseteq G_{\leq 1}\supseteq G_{\leq 2}\supseteq \ldots$$
of closed subgroups of $\Gl_V$ that eventually stabilizes at $G$.

\begin{defi}
	The \emph{defining degree of $G$} is the smallest $d$ such that $G=G_{\leq d}$. 
\end{defi}

The following lemma will be used in the next subsection. Roughly speaking, it shows that stabilizers of subspaces of $P(V,V^\vee)$ are uniformly definable.

\begin{lemma} \label{lemma: define Qs}
	Let $P\in\mathbb{N}[X,Y]$ be a polynomial of degree $d$, $n\geq 1$ and let $s$ denote the dimension of the vector space $P(V,V^\vee)$, where $V$ is an $n$-dimensional vector space. Furthermore let $1\leq r\leq s$ and $1\leq i_1<\ldots<i_r\leq s$. Then there exist polynomials $Q_1,\ldots,Q_{(s-r)r}\in\mathbb{Z}[T,1/\det((T_{i_\ell,j})_{1\leq\ell,j\leq r}),Z,Z^{-1}]$ of degree at most $d$ in $Z$ and $Z^{-1}$, where $T=(T_{i,j})_{1\leq i\leq s, 1\leq j\leq r}$ and $Z=(Z_{i,j})_{1\leq i,j\leq n}$ such that the following holds: For every field $k$, every $k$-vector space $V$ of dimension $n$ with basis $\underline{v}=(v_1,\ldots,v_n)$, all matrices $A\in k^{s\times r}$ such that $\det(A_{i_\ell,j})_{1\leq\ell,j\leq r}\neq 0$, all $k$-algebras $R$ and all $g\in\Gl_V(R)\simeq \Gl_n(R)$ (via $\underline{v}$) the $k$-subspace of $P(V,V^\vee)$ generated by $\underline{u}A$, where $\underline{u}$ is the $\underline{v}$-canonical basis of $P(V,V^\vee)$, is stable under $g$ if and only if $Q_i(A,g)=0$ for $i=1,\ldots, (s-r)r$.
\end{lemma}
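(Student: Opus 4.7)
The plan is to exhibit universal polynomial matrix entries for the action of $\Gl_V$ on $P(V,V^\vee)$, and then to translate the stability condition for the subspace spanned by $\underline{u}A$ into an explicit linear-algebraic criterion in terms of the invertible $r\times r$ submatrix $A_0$ of $A$.

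The first step is to produce, by induction on the construction of $P$, polynomials $M_{ij}\in\mathbb{Z}[Z,Z^{-1}]$ of degree at most $d$ in $Z,Z^{-1}$ (for $1\leq i,j\leq s$) such that for every field $k$, every $n$-dimensional $V$ with basis $\underline{v}$, every $k$-algebra $R$ and every $g\in\Gl_V(R)\simeq \Gl_n(R)$ (the latter identification via $\underline{v}$), one has $g(u_j)=\sum_{i=1}^s M_{ij}(g)\,u_i$, where $\underline{u}=(u_1,\ldots,u_s)$ is the $\underline{v}$-canonical basis of $P(V,V^\vee)$. The base cases are $M_{ij}=Z_{ij}$ for $V$ and $M_{ij}=(Z^{-1})_{ji}$ for $V^\vee$; the inductive step uses Kronecker products for tensor products and block-diagonal assembly for direct sums. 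The degree bound is preserved because each monomial $X^aY^b$ in $P$ with $a+b\leq d$ contributes entries of total degree $a+b$ in $Z,Z^{-1}$. This construction is implicit in, and closely mirrors, the proof of Proposition \ref{prop: equivalence for degree}.

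Write $M(g)=(M_{ij}(g))\in R^{s\times s}$. Then the $R$-submodule of $P(V,V^\vee)\otimes_k R$ spanned by the columns of $\underline{u}A$ is stable under $g$ if and only if $M(g)A=AB$ for some $B\in R^{r\times r}$. Since $A_0:=(A_{i_\ell,j})_{1\leq\ell,j\leq r}$ is invertible, restricting this equation to the rows $i_1,\ldots,i_r$ uniquely determines $B=A_0^{-1}(M(g)A)_{[i_\bullet]}$, where $(\cdot)_{[i_\bullet]}$ selects the rows indexed by $i_1,\ldots,i_r$. Conversely, defining $B$ by this formula, stability of $W$ reduces to checking that $M(g)A=AB$ holds in the remaining $s-r$ rows. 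Letting $k_1<\cdots<k_{s-r}$ enumerate $\{1,\ldots,s\}\setminus\{i_1,\ldots,i_r\}$, this yields the $(s-r)r$ scalar equations
\[
(M(g)A)_{k_\alpha,j}-\bigl(A\,A_0^{-1}\,(M(g)A)_{[i_\bullet]}\bigr)_{k_\alpha,j}=0,\qquad \alpha=1,\ldots,s-r,\ j=1,\ldots,r.
\]

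Replacing $A$ by the formal matrix $T=(T_{i,j})$ turns these into the desired polynomials $Q_1,\ldots,Q_{(s-r)r}$ in $\mathbb{Z}[T,1/\det((T_{i_\ell,j})_{1\leq\ell,j\leq r}),Z,Z^{-1}]$: generically, the entries of $A_0^{-1}$ lie in $\mathbb{Z}[T,1/\det((T_{i_\ell,j}))]$, and the degree in $Z,Z^{-1}$ remains at most $d$ since multiplying a polynomial in $T$ and $1/\det((T_{i_\ell,j}))$ by a universal entry of $M$ does not alter its $Z$-degree. The main technical obstacle is the recursive construction and degree-tracking of $M$ in the first step; once that is in place, the remainder is elementary linear algebra driven by the invertibility of $A_0$.
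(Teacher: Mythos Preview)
Your proof is correct and follows essentially the same approach as the paper. Both arguments first produce a universal matrix in $\mathbb{Z}[Z,Z^{-1}]^{s\times s}$ with entries of degree at most $d$ describing the $\Gl_V$-action on $P(V,V^\vee)$ in the $\underline{v}$-canonical basis, and then use the invertibility of the distinguished $r\times r$ submatrix $(T_{i_\ell,j})$ to express the stability condition as the vanishing of $(s-r)r$ polynomial entries; the only cosmetic difference is that the paper packages the linear algebra by extending $T$ to an invertible $s\times s$ matrix $\widetilde{T}$ and reading off the lower-left block of $\widetilde{T}^{-1}B(g)\widetilde{T}$, whereas you solve directly for the $r\times r$ matrix $B$ and check the complementary rows.
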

\begin{proof}
	Let $S=\mathbb{Z}[T,1/\det((T_{i_\ell,j})_{1\leq\ell,j\leq r})]$ and consider a free $S$-module $V_S$ of rank $n$ with basis $\underline{v}_S$. The definition of $P(V,V^\vee)$ and the $\underline{v}$-canonical bases of $P(V,V^\vee)$ extends from vector spaces to finite rank free modules in a straight forward manner.
	So let $\underline{u}_S$ denote the $\underline{v}_S$-canonical basis of $P(V_S,V_S^\vee)$. We extend the matrix $T\in S^{s\times r}$ to a matrix $\widetilde{T}\in\Gl_s(S)$ by adding the standard basis vectors $e_1,\ldots,e_{s-r}$ of length $s-r$ in the rows $\{1,\ldots,s\}\smallsetminus\{i_1,\ldots,i_r\}$. The group scheme $\Gl_{n,S}=\spec(S[Z,Z^{-1}])$ over $S$, acts linearly on $V_S$ and on $P(V_S,V_S^\vee)$. Moreover, there exist a matrix $B\in \mathbb{Z}[Z,Z^{-1}]^{s\times s}$ with entries of at most degree $d$ such that $g(\underline{u}_S)=\underline{u}_SB(g)$ for any $S$-algebra $S'$ and $g\in\Gl_n(S')$. It follows that
	$$g(\underline{u}_S\widetilde{T})=\underline{u}_SB(g)\widetilde{T}=\underline{u}_S\widetilde{T}(\widetilde{T}^{-1}B(g)\widetilde{T}).$$
	Thus the submodule of $V_S$ with basis $\underline{u}_ST$ is stable under $g$ if and only if the $(s-r)\times r$ submatrix in the lower left corner of $(\widetilde{T}^{-1}B(g)\widetilde{T})\in S[Z,Z^{-1}]^{s\times s}$ is zero.
	
	 We claim that the entries $Q_1,\ldots,Q_{(s-r)r}$ of this matrix have the required property. Since the entries of $B$ have degree at most $d$ in $Z$ and $Z^{-1}$, also $Q_1,\ldots,Q_{(s-r)r}$ have degree at most $d$ in $Z$ and $Z^{-1}$. The choice of a field $k$ and a matrix $A\in k^{s\times r}$ with $\det(A_{i_\ell,j})_{1\leq\ell,j\leq r}\neq 0$ defines a morphism of rings $S\to k$, i.e., a $k$-valued point of $\spec(S)$. The claim now follows by considering the fiber over this point. In more detail: The $k$-vector space $V_S\otimes_S k$ has basis $\underline{v}_S\otimes 1$ and we can define an isomorphism $V\to V_S\otimes_S k$ by  $\underline{v}\mapsto\underline{v}_S\otimes 1$. Similarly, we have an isomorphism $P(V,V^\vee)\to P(V_S,V_S^\vee)\otimes_S k,\ \underline{u}\mapsto \underline{u}_S\otimes 1$.
	We extend $A$ to a matrix $\widetilde{A}$ in $\Gl_s(k)$ is a similar fashion as we did with $T$. Then, for a $k$-algebra $R$ and $g\in\Gl_n(R)$ we have $g(\underline{u}\widetilde{A})=\underline{u}\widetilde{A}(\widetilde{A}^{-1}B(g)\widetilde{A})$. Thus the subspace of $P(V,V^\vee)$ generated by $\underline{u}A$ is stable under $g$ if and only if $Q_i(A,g)=0$ for $i=1,\ldots, (s-r)r$.
\end{proof}

\subsection{Types and stable subspaces}

Let $\M$ be a model of $\PROALG$ and let $(k,C,\omega)$ be the associated object of $\TANN$. For an object $b$ of $C$ we let $$G(b)\leq \Gl_{\omega(b)}$$ denote the (scheme-theoretic) image of the representation $G(\M)\to\Gl_{\omega(b)}$ defined by $b$. Note that an algebraic group is a quotient of $G(\M)$ if and only if it is isomorphic to some $G(b)$. Moreover, $G(\M)$ is the projective limit of the $G(b)$'s.
Our main result is that $\tp(b/k)$ determines $G(b)$. We begin by translating the main findings of the previous subsection into a statement about models of $\PROALG$. The point of the following corollary is that this somewhat clumsy characterization of when $G(b)_{\leq d}=G(b)_{\leq d'}$ can be expressed by an $\mathcal{L}$-formula.

\begin{cor} \label{cor: algebraic version of long formula}
	Let $\M$ be a model of $\PROALG$ and $(k,C,\omega)$ the associated object of $\TANN$. Let $b$ be an object of $C$ and for $0\leq d<d'$ let $s$ and $s'$ denote the dimensions of $P_d(\omega(b),\omega(b)^\vee)$ and $P_{d'}(\omega(b),\omega(b)^\vee)$ respectively. Then $G(b)_{\leq d}=G(b)_{\leq d'}$ if and only if the following condition is satisfied: For all bases $\underline{v}$ of $\omega(b)$, for all $r'$ with $1\leq r'\leq s'$, all $1\leq i'_1<\ldots<i'_{r'}\leq s$ and all $A'\in k^{s'\times r'}$ with $\det((A'_{i'_\ell,j})_{1\leq\ell,j\leq r'})\neq 0$ such that the subspace of $P_{d'}(\omega(b),\omega(b)^\vee)$ generated by $\underline{u}'A'$ is $G(\M)$-stable, where $\underline{u}'$ is the $\underline{v}$-canonical basis of $P_{d'}(V,V^\vee)$, there exist $1\leq r\leq s$, $1\leq i_1<\ldots<i_r\leq s$ and $A\in k^{s\times r}$ with $\det((A_{i_\ell,j})_{1\leq\ell,j\leq r})\neq 0$ such that the subspace of $P_d(V,V^\vee)$ generated by $\underline{u}A$ is $G(\M)$-stable, where $\underline{u}$ denotes the $\underline{v}$-canonical basis of $P_d(V,V^\vee)$, and the polynomials $Q'_1(A', Z),\ldots,Q'_{(s'-r')r'}(A',Z)\in k[Z,Z^{-1}]_{\leq d'}$ lie in the ideal of $k[Z,Z^{-1}]$ generated by $Q_1(A,Z),\ldots,Q_{(s-r)r}(A,Z)\in k[Z,Z^{-1}]_{\leq d}$. Here the polynomials $Q_1',\ldots,Q'_{(s'-r')r'}$ and $Q_1,\ldots,Q_{(s-r)r}$ are defined as in Lemma \ref{lemma: define Qs}.
\end{cor}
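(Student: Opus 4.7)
The plan is to translate the equality $G(b)_{\leq d} = G(b)_{\leq d'}$ into an inclusion of defining (Hopf) ideals of closed subgroup schemes of $\Gl_{\omega(b)}$, and then to read off the condition in the corollary directly from Lemma~\ref{lemma: define Qs} together with the ``Moreover'' clause of Proposition~\ref{prop: equivalence for degree}.

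First I would fix a basis $\underline{v}$ of $\omega(b)$ so as to identify $\Gl_{\omega(b)} = \Gl_n$ and $k[\Gl_{\omega(b)}] = k[Z,Z^{-1}]$. Since $\I(G(b)) \cap k[Z,Z^{-1}]_{\leq d} \subseteq \I(G(b)) \cap k[Z,Z^{-1}]_{\leq d'}$, one has $G(b)_{\leq d} \supseteq G(b)_{\leq d'}$ automatically, so equality is equivalent to the reverse inclusion of defining Hopf ideals $\I(G(b)_{\leq d'}) \subseteq \I(G(b)_{\leq d})$. Every subspace of $P_d(\omega(b),\omega(b)^\vee)$ admits a presentation as the span of $\underline{u}A$, with $\underline{u}$ the $\underline{v}$-canonical basis and $A$ of the shape prescribed in Lemma~\ref{lemma: define Qs}; for such a presentation the scheme-theoretic stabilizer is defined by the ideal $(Q_1(A,Z),\ldots,Q_{(s-r)r}(A,Z))$, and likewise for $P_{d'}$. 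Proposition~\ref{prop: equivalence for degree}(ii) identifies $G(b)_{\leq d'}$ with the scheme-theoretic intersection of the stabilizers of all $G(\M)$-stable subspaces $W' \subseteq P_{d'}(\omega(b),\omega(b)^\vee)$, so $\I(G(b)_{\leq d'})$ is generated by the union, over all such $W' = \operatorname{span}(\underline{u}'A')$, of the sets $\{Q'_1(A',Z),\ldots,Q'_{(s'-r')r'}(A',Z)\}$.

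For the forward direction I would invoke the ``Moreover'' part of Proposition~\ref{prop: equivalence for degree} to fix a single $G(\M)$-stable subspace $W \subseteq P_d(\omega(b),\omega(b)^\vee)$ whose stabilizer is exactly $G(b)_{\leq d}$; presenting $W = \operatorname{span}(\underline{u}A)$ gives $\I(G(b)_{\leq d}) = (Q_1(A,Z),\ldots,Q_{(s-r)r}(A,Z))$. Then for every $G(\M)$-stable $W' = \operatorname{span}(\underline{u}'A')$ in $P_{d'}(\omega(b),\omega(b)^\vee)$, the stabilizer of $W'$ contains $G(b)_{\leq d'} = G(b)_{\leq d}$, so each $Q'_j(A',Z)$ lies in $\I(G(b)_{\leq d}) = (Q_1(A,Z),\ldots,Q_{(s-r)r}(A,Z))$; in fact the \emph{same} $A$ works uniformly in $A'$, which is formally stronger than what the corollary requires. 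For the converse, given any $G(\M)$-stable $W' = \operatorname{span}(\underline{u}'A') \subseteq P_{d'}(\omega(b),\omega(b)^\vee)$, pick an $A$ as provided by the hypothesis. The ideal containment $(Q'_j(A',Z))_j \subseteq (Q_i(A,Z))_i$ translates into a containment of stabilizer subgroup schemes $\operatorname{Stab}(\operatorname{span}(\underline{u}A)) \subseteq \operatorname{Stab}(W')$; since $\operatorname{span}(\underline{u}A)$ is by assumption $G(\M)$-stable we then have $G(b)_{\leq d} \subseteq \operatorname{Stab}(\operatorname{span}(\underline{u}A)) \subseteq \operatorname{Stab}(W')$. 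Intersecting over all $W'$ and reapplying Proposition~\ref{prop: equivalence for degree}(ii) for $d'$ gives $G(b)_{\leq d} \subseteq G(b)_{\leq d'}$, hence equality.

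The point requiring care is that Lemma~\ref{lemma: define Qs} delivers the \emph{defining} ideal of the scheme-theoretic stabilizer, not merely equations cutting it out set-theoretically; this precision is what lets the ideal-theoretic containments above be used directly in both directions without passing through a Nullstellensatz step. It is also what will make the corollary's condition expressible by an $\mathcal{L}$-formula once one further quantifies over cofactors of bounded degree, which is the point of its statement.
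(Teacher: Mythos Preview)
Your argument is correct and follows the same overall strategy as the paper's proof: both directions hinge on the ``Moreover'' clause of Proposition~\ref{prop: equivalence for degree} (to produce a single $G(\M)$-stable subspace whose stabilizer is exactly $G(b)_{\leq d}$, giving explicit generators $Q_i(A,Z)$ for $\I(G(b)_{\leq d})$) together with Lemma~\ref{lemma: define Qs} (to identify stabilizer ideals). Two small remarks: when you write ``Proposition~\ref{prop: equivalence for degree}(ii)'' for the description of $G(b)_{\leq d'}$ via subspaces of $P_{d'}(\omega(b),\omega(b)^\vee)$, you mean (iii); and in the converse direction the paper takes a shortcut you do not need --- rather than intersecting over all $W'$, it applies the ``Moreover'' clause at level $d'$ to pick a single $W'$ with $\operatorname{Stab}(W')=G(b)_{\leq d'}$, so that a single instance of the hypothesis already yields $\I(G(b)_{\leq d'})\subseteq\I(G(b)_{\leq d})$. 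Your route via the intersection is equally valid.
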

\begin{proof}
	Note that a subspace of some $P(\omega(b),\omega(b)^\vee)$ is $G(\M)$-stable if and only if it is $G(b)$-stable. Thus, according to Proposition \ref{prop: equivalence for degree} the closed subgroup $G(b)_{\leq d'}$ of $\Gl_{\omega(b)}$ is the stabilizer of some $G(\M)$-stable subspace $W'$ of $P_{d'}(V,V^\vee)$. If $\underline{v}$ is a basis of $\omega(b)$ and $\underline{u}'$ is the $\underline{v}$-canonical basis of $P_{d'}(V,V^\vee)$, then $W$ has a basis of the form $\underline{u}'A'$, for some $A'\in k^{s'\times r'}$ with linearly independent columns, i.e., $\det((A'_{i'_\ell,j})_{1\leq\ell,j\leq r'})\neq 0$ for an appropriate choice of $1\leq i'_1<\ldots<i'_{r'}\leq s$. So the defining ideal $\I(G(b)_{\leq d'})$ of $G(b)_{\leq d'}$ is generated by $Q'_1(A', Z),\ldots,Q'_{(s'-r')r'}(A',Z)$ according to Lemma~\ref{lemma: define Qs}. By construction, the polynomials $Q_1(A,Z),\ldots,Q_{(s-r)r}(A,Z)$ lie in the ideal $\I(G(b)_{\leq d})$. Thus, if $Q'_1(A', Z),\ldots,Q'_{(s'-r')r'}(A',Z)$ lie in the ideal generated by $Q_1(A,Z),\ldots,Q_{(s-r)r}(A,Z)$, then $\I(G(b)_{\leq d'})\subseteq \I(G(b)_{\leq d})$. So $G(b)_{\leq d}=G(b)_{\leq d'}$ as desired.
	
	Conversely, assume that $G(b)_{\leq d}=G(b)_{\leq d'}$. Similarly to the above argument, there exists  appropriate integers, $r,i_1,\ldots,i_r$ and a matrix $A\in k^{s\times r}$ such that $\det((A_{i_\ell,j})_{1\leq\ell,j\leq r})\neq 0$ and $\I(G(b)_{\leq d})$ is generated by $Q_1(A,Z),\ldots,Q_{(s-r)r}(A,Z)$. Since $\I(G(b)_{\leq d'})=\I(G(b)_{\leq d})$ it follows that $Q'_1(A', Z),\ldots,Q'_{(s'-r')r'}(A',Z)$ lie in the ideal generated by $Q_1(A,Z),\ldots,Q_{(s-r)r}(A,Z)$.
\end{proof}

Two verify that the above statement can be expressed by an $\mathcal{L}$-formula we need two lemmas.
Roughly speaking, the following lemma shows that
we can quantify over the $G(\M)$-stable subspaces of $P(\omega(b),\omega(b)^\vee)$.

\begin{lemma} \label{lemma: quantify over subspaces}
	Let $p=(m,n)$ and $P\in\mathbb{N}[X,Y]$. Let $s$ be the dimension of the vector space $P(V,V^\vee)$, where $V$ is an $n$-dimensional vector space. Then, for every $r$ with $1\leq r\leq s$, there exists an $\mathcal{L}$-formula $\varphi(T,y_1,\ldots,y_n)$, where $T=(T_{i,j})$ is an $s\times r$ matrix of variables from the field sort and $y_1,\ldots,y_n$ are variables from the $p$-total objects sort such that the following holds:
	
	For every model $\M=(k,B_p,X_p,B_{p,q},X_{p,q})$ of $\PROALG$, all $b\in B_p$, all bases $v_1,\ldots,v_n$ of $\omega(b)$ and all $A\in k^{s\times r}$ with linearly independent columns we have
	$\M\models \varphi(A,v_1,\ldots,v_n)$ if and only if the $k$-subspace of $P(\omega(b),\omega(b)^\vee)$ generated by $\underline{u}A$ is $G(\M)$-stable, where $\underline{u}$ is the $\underline{v}$-canonical basis of $P(\omega(b),\omega(b)^\vee)$.
\end{lemma}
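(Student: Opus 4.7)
The plan is to translate each ingredient entering the definition of the subspace generated by $\underline{u}A$ and of $G(\M)$-stability into a corresponding block of an $\mathcal{L}$-formula, using the axioms of $\PROALG$ to existentially witness the required objects and morphisms in $C(\M)$. Throughout, $P$ and $p=(m,n)$ are fixed in advance, so the structure of the formula can be hard-coded by induction on $P$.

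First I would build an object of $C(\M)$ representing $P(\omega(b),\omega(b)^\vee)$. Given $y_1,\ldots,y_n$ from the $p$-total objects sort, one sets $b:=\pi_p(y_1)$ and demands $\pi_p(y_i)=b$ for all $i$. Invoking axiom (23) (Existence of duals), one existentially quantifies over a witness $b^\vee\in B_{(1,n)}$ together with the duality data $f^\vee,g^\vee$. Then, processing $P$ syntactically as a sum of monomials $X^aY^c$ of total degree $\leq d$, one uses axiom (15) (Tensor product) and axiom (24) (Existence of biproducts) to existentially quantify over objects and structural morphisms (injections $I$, projections $P$) realizing each monomial $\omega(b)^{\otimes a}\otimes\omega(b^\vee)^{\otimes c}$ with a fixed parenthesization, and over their direct sum. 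Denote the resulting tensor-irreducible witness by $\widetilde{p}\in B_{(1,s)}$, which may be forced via axiom (21).

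Next I would realize the $v$-canonical basis $\underline{u}=(u_1,\ldots,u_s)$ of $\omega(\widetilde{p})$ inside the formula. Using the evaluation morphism supplied with the duality data, express the dual basis $(v_1^\vee,\ldots,v_n^\vee)$ in $\omega(b^\vee)$ as the unique tuple satisfying $\widetilde{f^\vee}(v_i\otimes v_j^\vee)=\delta_{ij}u_0$, where $u_0$ is the distinguished nonzero element of $\mathds{1}=X_{(1,1)}(1)$ (definable from the field-sort constant $1$ via the $k$-action on the one-dimensional $\omega(\mathds{1})$). Recursively, each $u_i$ is then an explicit $\mathcal{L}$-term built from $v_1,\ldots,v_n$, $v_1^\vee,\ldots,v_n^\vee$, the symbols $\otimes_{p,q}$ and the image maps of the injections $I$ from axiom (24). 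For $A=(A_{ij})$ an $s\times r$ matrix of field-sort variables, the vectors $w_j=\sum_{i=1}^s A_{ij}u_i\in\omega(\widetilde{p})$ are then $\mathcal{L}$-definable, using $SM_p$ and $A_p$.

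The final block encodes $G(\M)$-stability of $W=\operatorname{span}_k(w_1,\ldots,w_r)$. By the tannakian equivalence $C(\M)\simeq\Rep(G(\M))$ (Theorem 1.5 combined with Theorem 2.5), a subspace of $\omega(\widetilde{p})$ is $G(\M)$-stable if and only if it is the image of $\widetilde{h}$ for some morphism $h\colon c\to\widetilde{p}$ in $C(\M)$; by axiom (21) we may take $c\in B_{(1,r)}$ and $h\in B_{(1,r),(1,s)}$. The formula thus existentially quantifies over such $c$, a tuple $(e_1,\ldots,e_r)$ of elements of $\omega(c)$ satisfying $LI_{(1,r)}(e_1,\ldots,e_r)$, and $h$ with $S^B h=c$, $T^B h=\widetilde{p}$, and asserts $\widetilde{h}(e_j)=w_j$ for $j=1,\ldots,r$, an assertion expressible via the graph of $h$ inside $X_{(1,r),(1,s)}$. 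The hypothesis that the columns of $A$ are linearly independent forces $\widetilde{h}$ to be injective, so its image is precisely $W$; hence the formula holds exactly when $W$ is $G(\M)$-stable.

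The one step that requires real care is the recursive bookkeeping: choosing a parenthesization pattern for each monomial $X^aY^c$, threading the biproduct data for the $\binom{a+c+\text{etc.}}{\cdot}$-many summands of $P$, and consistently matching the inductively defined $v$-canonical basis against the variables $T_{ij}$. Since every operation needed (duals, tensor products, biproducts, linearity, evaluation) is explicitly available in the axioms of Section~2.2 and $P,p,s,r$ are fixed in advance, this is a finite syntactic construction rather than a genuine obstruction; the formula $\varphi$ produced is first-order in $\mathcal{L}$ as required.
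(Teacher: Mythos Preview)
Your argument is correct and follows essentially the same route as the paper: build the object $P(b,b^\vee)$ in $C(\M)$ from duals, tensor products and biproducts, express the $\underline{v}$-canonical basis by $\mathcal{L}$-formulas, and then characterize $G(\M)$-stability of the span of $\underline{u}A$ as the existence of an object $c\in B_{(1,r)}$ and a morphism $h\colon c\to P(b,b^\vee)$ whose image under $\omega$ hits precisely the vectors $w_j$. The paper's proof is much terser but has exactly this shape.

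One small inaccuracy worth flagging: there is no \emph{distinguished} nonzero $u_0\in\omega(\mathds{1})$ definable from the constant $1$; the axioms only give you $\mathds{1}=X_{(1,1)}(1)$ as a one-dimensional space. You should instead existentially quantify over a nonzero $u_0$. This does not harm the argument, since replacing $u_0$ by $\lambda u_0$ rescales the dual basis by $\lambda^{-1}$ and hence rescales each summand $V^{\otimes a}\otimes (V^\vee)^{\otimes c}$ of $P(V,V^\vee)$ by the $G$-equivariant scalar $\lambda^{-c}$; the resulting change of canonical basis is a $G$-equivariant automorphism of $P(\omega(b),\omega(b)^\vee)$, so stability of the span of $\underline{u}A$ is unaffected.
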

\begin{proof}
	Let $\M=(k,B_p,X_p,B_{p,q},X_{p,q})$ be a model of $\PROALG$ and let $(k,C,\omega)$ be the associated object of $\TANN$. Moreover let $b\in B_p$ and let $P(b,b^\vee)\in B_{(1,s)}$ denote the unique tensor irreducible object of $C$ such that $\omega(P(b,b^\vee))\simeq P(\omega(b),\omega(b)^\vee)$ as $G(\M)$-representations. A subspace $W$ of $P(\omega(b),\omega(b)^\vee)$ of dimension $r$ is $G(\M)$-stable if and only if there exist $b'\in B_{(r,1)}$ and a morphism $h\colon b'\to P(b,b^\vee)$ in $C$ such that $\omega(h)\colon \omega(b')\to\omega(P(b,b^\vee))\simeq P(\omega(b),\omega(b)^\vee)$ has image $W$. So, in coordinates, if $\underline{v}$ is a basis of $\omega(b)$, and $A\in k^{s\times r}$ is such that $\underline{u}A$ is a basis of $W$, where $\underline{u}$ is the $\underline{v}$-canonical basis of $P(\omega(b),\omega(b)^\vee)$, then $W$ is $G(\M)$-stable if and only if there exists $b'\in B_{(r,1)}$ with a basis $\underline{v}'$ of $\omega(b')$ and a morphism $h\colon b'\to P(b,b^\vee)$ in $C$ such that $\omega(h)\colon \omega(b')\to\omega(P(b,b^\vee))\simeq P(\omega(b),\omega(b)^\vee)$ maps $\underline{v}'$ to $\underline{u}A$. Since the $\underline{v}$-canonical basis of $P(\omega(b),\omega(b)^\vee)$ can be characterized in terms of $\underline{v}$ by $\mathcal{L}$-formulas, the claim follows.
\end{proof}
%
We will also need a classical result of G. Hermann (cf. \cite[Theorem 3.4]{Aschenbrenner:IdealMembershipInPolynomialRingsOverTheIntegers}).
\begin{lemma}[G. Hermann \cite{Hermann:FrageDerEndlichVielenSchritte}] \label{lemma: Hermann}
	For every field $k$, if $f,g_1,\ldots,g_m\in k[X_1,\ldots,X_n]$ are polynomials of degree at most $d$ such that $f$ lies in the ideal generated by $g_1,\ldots,g_m$, then there exist polynomials $f_1,\ldots,f_m\in k[X_1,\ldots,X_n]$ of degree at most $(2d)^{2^n}$ such that $f=f_1g_1+\ldots+f_m g_m$. \qed
\end{lemma}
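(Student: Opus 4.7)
The plan is to prove this by induction on $n$, the number of variables, following Hermann's classical approach. For the base case $n=1$, the ring $k[X_1]$ is a principal ideal domain. Letting $g=\gcd(g_1,\ldots,g_m)$, the extended Euclidean algorithm yields $g=\sum h_i g_i$ with $\deg(h_i)\leq d$, and since $f\in(g)$ we can write $f=qg$ for some $q$ with $\deg(q)\leq d$, giving $f=\sum(qh_i)g_i$ with coefficients of degree at most $2d\leq (2d)^{2^1}$.

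For the inductive step, I would view $k[X_1,\ldots,X_n]=k[X_1,\ldots,X_{n-1}][X_n]$ and express every polynomial as a polynomial in $X_n$ with coefficients in $R=k[X_1,\ldots,X_{n-1}]$. Seeking $f_i$ with prescribed degree bound $D$ then becomes a linear algebra problem over $R$: one writes $f_i=\sum_{j\leq D} f_{i,j}X_n^j$ with unknowns $f_{i,j}\in R$ and compares coefficients of powers of $X_n$ in $f=\sum f_i g_i$. This produces an inhomogeneous $R$-linear system whose coefficient matrix has entries of degree at most $d$ in $R$.

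The crux is to control the module of syzygies $\operatorname{Syz}(g_1,\ldots,g_m)$ and to bound the degrees of solutions of such linear systems in terms of the degrees of the entries. Applying the inductive hypothesis to $R$ yields degree bounds on generators of these syzygies and on particular solutions of the linear systems, which combine to give a degree bound $D'$ in $k[X_1,\ldots,X_n]$ that is roughly the square of the bound in $R$. A careful accounting shows this recursion yields exactly the doubly-exponential bound $(2d)^{2^n}$.

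The main obstacle is the bookkeeping of degrees through the squaring step, which is the reason the bound is doubly exponential rather than merely exponential. Since this is a venerable theorem of Hermann from 1926, I would not reproduce the technical estimates, but instead invoke the modern reference \cite{Aschenbrenner:IdealMembershipInPolynomialRingsOverTheIntegers} already cited, where Theorem~3.4 gives precisely the bound stated here. This is exactly why the lemma as displayed in the excerpt ends with \qed and no further argument.
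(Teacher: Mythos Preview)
Your assessment is correct: the paper provides no proof of this lemma, simply citing Hermann's original paper and Aschenbrenner's Theorem~3.4 before closing with \qed. Your sketch of the classical inductive argument is accurate and your conclusion that the paper treats this as a black-box citation is exactly right.
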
	

Combining the above results we see that the set of all $b\in B_p$ such that $G(b)_{\leq d}=G(b)_{\leq d'}$ is definable:
 
\begin{prop} \label{prop: define defining degree}
For given $p$ and integers $0\leq d<d'$ there exists an $\mathcal{L}$-formula $\varphi(x)$ in one free variable $x$ belonging to the $p$-basic objects sort such that for every model $\M=(k,B_p,X_p,B_{p,q},X_{p,q})$ of $\PROALG$ and every $b\in B_p$ we have $\M\models \varphi(b)$ if and only if $G(b)_{\leq d}=G(b)_{\leq d'}$.
\end{prop}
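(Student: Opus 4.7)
The plan is to read off $\varphi(x)$ directly from the characterization in Corollary~\ref{cor: algebraic version of long formula}, using Lemma~\ref{lemma: quantify over subspaces} to handle the $G(\M)$-stability clauses and Hermann's bound (Lemma~\ref{lemma: Hermann}) to turn the ideal-membership clause into something first-order expressible. Fix $p=(m,n)$, and let $s,s'$ be the (constant, purely combinatorial) dimensions of $P_d(V,V^\vee)$ and $P_{d'}(V,V^\vee)$ for an $n$-dimensional $V$. Likewise the polynomials $Q_1,\ldots,Q_{(s-r)r}$ and $Q'_1,\ldots,Q'_{(s'-r')r'}$ from Lemma~\ref{lemma: define Qs} have coefficients in $\mathbb{Z}$ that are computable once $p, d, d'$ are fixed; these coefficients will appear as constants in the formula.

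First I would replace the universal/existential quantification over bases of $\omega(b)$ and over matrices $A, A'$ by quantification over tuples in the $p$-total objects sort and the field sort, respectively. The condition ``$v_1,\ldots,v_n$ is a basis of $\omega(b)$'' is expressed by $LI_p(v_1,\ldots,v_n)\wedge\bigwedge_i\pi_p(v_i)=x$, and the non-vanishing of $\det((A_{i_\ell,j}))$ is an open field condition. By Lemma~\ref{lemma: quantify over subspaces}, for each choice of $r\le s$ (resp.\ $r'\le s'$) and each size of subscript tuple, there is an $\mathcal{L}$-formula $\psi_{d,r}(T,y_1,\ldots,y_n)$ (resp.\ $\psi_{d',r'}$) expressing that the subspace of $P_d(\omega(b),\omega(b)^\vee)$ (resp.\ $P_{d'}(\omega(b),\omega(b)^\vee)$) spanned by $\underline{u}T$ is $G(\M)$-stable, where $\underline{u}$ is the $\underline{v}$-canonical basis; these are the ``stable subspace'' atoms of the formula.

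Next I handle the ideal-membership clause ``$Q'_i(A',Z)\in (Q_1(A,Z),\ldots,Q_{(s-r)r}(A,Z))\subseteq k[Z,Z^{-1}]$''. Clearing denominators by multiplying through by a power of $\det(Z)$, this becomes membership of a polynomial in a polynomial ideal inside $k[Z,\det(Z)]\subseteq k[Z,\det(Z),Y]/(Y\det(Z)-1)$, in finitely many (bounded) variables, and with the degrees of the $Q_i, Q'_i$ bounded in terms of $d,d',n$. Lemma~\ref{lemma: Hermann} then produces a bound $N=N(d,d',n)$ such that membership is equivalent to the existence of cofactor polynomials $f_{ij}$ of degree $\le N$ with $Q'_i=\sum_j f_{ij}Q_j$. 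Since the polynomials $Q_j(A,Z),Q'_i(A',Z)$ have coefficients in $k$ that are explicit integer polynomials in the entries of $A$ and $A'$ (and in $1/\det$ of a fixed submatrix), the existence of such cofactors is a first-order statement in the field sort: existentially quantify the coefficients of each $f_{ij}$ and equate coefficients of the resulting polynomial identity $Q'_i=\sum_j f_{ij}Q_j$ in $k[Z,Z^{-1}]$.

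Assembling the pieces, $\varphi(x)$ is the formula: for all tuples $v_1,\ldots,v_n$ with $\pi_p(v_i)=x$ and $LI_p(v_1,\ldots,v_n)$, for every $r',i'_1<\cdots<i'_{r'}$ and matrix $A'$ with $\det\neq 0$ such that $\psi_{d',r'}(A',\bar v)$ holds, there exist $r,i_1<\cdots<i_r$ and $A$ with $\det\neq 0$ such that $\psi_{d,r}(A,\bar v)$ holds and the finitely many ideal-membership atoms (coded as above via Hermann's bound) are satisfied. By Corollary~\ref{cor: algebraic version of long formula}, $\M\models \varphi(b)$ iff $G(b)_{\leq d}=G(b)_{\leq d'}$. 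The main technical obstacle is purely bookkeeping: verifying that the translation of ``ideal membership'' into a first-order statement via Hermann's degree bound respects the $1/\det(Z)$ denominators and that the resulting polynomial identity in $Z$ can be expressed by finitely many equalities of $k$-coefficients; everything else is a direct combination of the preceding lemmas.
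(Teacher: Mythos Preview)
Your proposal is correct and follows exactly the approach of the paper: the paper's proof is the single sentence ``It suffices to see that the statement from Corollary~\ref{cor: algebraic version of long formula} can be expressed by an $\mathcal{L}$-formula. This is guaranteed by Lemmas~\ref{lemma: quantify over subspaces} and~\ref{lemma: Hermann}.'' You have simply unpacked that sentence, spelling out how the basis quantifier is handled via $LI_p$, how the stability atoms come from Lemma~\ref{lemma: quantify over subspaces}, and how Hermann's bound (after rewriting $k[Z,Z^{-1}]$ as $k[Z,Y]/(Y\det(Z)-1)$) converts the ideal-membership clause into a bounded existential over field-sort coefficients.
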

\begin{proof}
	It suffices to see that the statement from Corollary \ref{cor: algebraic version of long formula} can be expressed by an $\mathcal{L}$-formula. This is guaranteed by Lemmas \ref{lemma: quantify over subspaces} and \ref{lemma: Hermann}.
\end{proof}

\begin{cor} \label{cor: type determines defining degree}
	Let $\M=(k,B_p,X_p,B_{p,q},X_{p,q})$ be a model of $\PROALG$. If $b,b'\in B_p$ are such that $\tp(b/\emptyset)=\tp(b'/\emptyset)$, then the defining degree of $G(b)$ agrees with the defining degree of $G(b')$.
\end{cor}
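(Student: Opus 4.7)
The plan is to deduce the corollary directly from Proposition \ref{prop: define defining degree} by recognizing that the defining degree is type-definable over the empty set. First I would recast the definition: the defining degree $d^\ast(b)$ of $G(b)$, namely the smallest $d \geq 0$ with $G(b)_{\leq d} = G(b)$, is equivalently the smallest $d \geq 0$ such that $G(b)_{\leq d} = G(b)_{\leq d'}$ for every $d' > d$. This equivalence is immediate: the chain $\Gl_{\omega(b)} = G(b)_{\leq 0} \supseteq G(b)_{\leq 1} \supseteq \cdots$ is descending and eventually stationary at $G(b)$, so a fixed term $G(b)_{\leq d}$ equals $G(b)$ if and only if it coincides with every subsequent (hence every sufficiently large) term of the chain.

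Next, for each pair of integers $0 \leq d < d'$, let $\varphi_{d,d'}(x)$ denote the parameter-free $\mathcal{L}$-formula supplied by Proposition \ref{prop: define defining degree}; recall that $\M \models \varphi_{d,d'}(b)$ if and only if $G(b)_{\leq d} = G(b)_{\leq d'}$. Combining this with the reformulation above, the property ``$d^\ast(b) \leq d_0$'' is captured precisely by the partial $\mathcal{L}$-type over $\emptyset$
$$
\Sigma_{d_0}(x) \;=\; \{\varphi_{d_0, d'}(x) : d' > d_0\}.
$$
Equivalently, ``$d^\ast(b) = d_0$'' is captured by $\Sigma_{d_0}(x)$ together with the single formula $\neg \varphi_{d_0-1, d_0}(x)$ when $d_0 \geq 1$ (and by $\Sigma_0(x)$ alone when $d_0 = 0$).

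Finally, since $\tp(b/\emptyset) = \tp(b'/\emptyset)$, the elements $b$ and $b'$ realize exactly the same parameter-free $\mathcal{L}$-formulas, hence the same partial $\mathcal{L}$-types over $\emptyset$. In particular $b \models \Sigma_{d_0}$ iff $b' \models \Sigma_{d_0}$ for every $d_0$, so $d^\ast(b) \leq d_0 \Leftrightarrow d^\ast(b') \leq d_0$, and therefore $d^\ast(b) = d^\ast(b')$. There is no genuine obstacle in this step: Proposition \ref{prop: define defining degree} absorbs all the algebraic and model-theoretic work (uniform definability of stabilizers via Lemma \ref{lemma: quantify over subspaces} and the Hermann degree bound in Lemma \ref{lemma: Hermann}), and the corollary is an essentially formal consequence of the type-definability of the defining degree.
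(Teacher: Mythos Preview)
Your proof is correct and follows exactly the paper's approach: the paper's proof is the single line ``Clear from Proposition \ref{prop: define defining degree},'' and you have simply spelled out the immediate deduction in detail. Your reformulation of the defining degree via the stabilization of the chain and the use of the parameter-free formulas $\varphi_{d,d'}$ is precisely what that one-line proof leaves implicit.
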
 
	\begin{proof}
		Clear from Proposition \ref{prop: define defining degree}.
	\end{proof}
	
\begin{theo}
	Let $\M=(k,B_p,X_p,B_{p,q},X_{p,q})$ be a model of $\PROALG$ and let $b,b'\in B_p$. If $\tp(b/k)=\tp(b'/k)$, then there exists an isomorphism $\omega(b)\to\omega(b')$ of $k$-vector spaces that induces an isomorphism between $G(b)$ and $G(b')$.
\end{theo}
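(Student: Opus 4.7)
The plan is to use the type $\tp(b/k)$ to transport a single, well-chosen $G(\M)$-stable subspace of a representation built out of $\omega(b)$ to the analogous representation built out of $\omega(b')$, and then to produce the required isomorphism by matching bases. Since $b,b'\in B_p$ with $p=(m,n)$, we have $\dim_k\omega(b)=\dim_k\omega(b')=n$, and Corollary~\ref{cor: type determines defining degree} yields a common defining degree $d$ for $G(b)$ and $G(b')$. First I would fix a basis $\underline{v}=(v_1,\ldots,v_n)$ of $\omega(b)$, identify $\Gl_{\omega(b)}$ with $\Gl_n$, and invoke the last assertion of Proposition~\ref{prop: equivalence for degree} to pick a $G(\M)$-stable subspace $W\subseteq P_d(\omega(b),\omega(b)^\vee)$ whose stabilizer in $\Gl_{\omega(b)}$ is exactly $G(b)_{\leq d}=G(b)$. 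A basis of $W$ can then be written as $\underline{u}A$ with $A\in k^{s\times r}$ of full column rank, where $s=\dim P_d(\omega(b),\omega(b)^\vee)$, $r=\dim W$, and $\underline{u}$ is the $\underline{v}$-canonical basis of $P_d(\omega(b),\omega(b)^\vee)$.

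Next I would cook up an $\mathcal{L}$-formula $\psi_{A}(x)$ with parameters $A$ from $k$ expressing:
\emph{there is a basis $\underline{y}$ of $\omega(x)$ such that the subspace $\langle\underline{u}'A\rangle$ of $P_d(\omega(x),\omega(x)^\vee)$, where $\underline{u}'$ is the $\underline{y}$-canonical basis, is $G(\M)$-stable and its stabilizer in $\Gl_{\omega(x)}$ coincides with $G(x)$.}
The existence of $\underline{y}$ and the $G(\M)$-stability are handled directly by Lemma~\ref{lemma: quantify over subspaces}. For the equality of stabilizers, note that by Proposition~\ref{prop: equivalence for degree}(iii), $G(x)=G(x)_{\leq d}$ is the intersection of stabilizers of all $G(\M)$-stable subspaces of $P_d(\omega(x),\omega(x)^\vee)$; the equality amounts to the requirement that any $g\in\Gl_{\omega(x)}$ stabilizing $\langle\underline{u}'A\rangle$ stabilizes every other such subspace. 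By Lemma~\ref{lemma: define Qs} this becomes an ideal-containment relation in $k[Z,Z^{-1}]$ between explicit polynomials, which (after clearing the denominator $\det(Z)$ to work in $k[Z]$) is expressible via the Hermann bound of Lemma~\ref{lemma: Hermann}, in direct analogy with the proof of Proposition~\ref{prop: define defining degree}.

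With $\psi_{A}$ in hand, $\M\models\psi_{A}(b)$ is witnessed by $\underline{v}$, and $\tp(b/k)=\tp(b'/k)$ gives $\M\models\psi_{A}(b')$; picking a witness yields a basis $\underline{v}'=(v'_1,\ldots,v'_n)$ of $\omega(b')$ for which $W':=\langle\underline{u}'A\rangle$ (with $\underline{u}'$ the $\underline{v}'$-canonical basis of $P_d(\omega(b'),\omega(b')^\vee)$) is $G(\M)$-stable with stabilizer $G(b')$. I would then define the $k$-linear isomorphism $\iota:\omega(b)\to\omega(b')$ by $\iota(v_i)=v'_i$. Naturality of the construction $P_d(-,-^\vee)$ under linear isomorphisms shows that the induced map on $P_d$ sends $\underline{u}$ to $\underline{u}'$, hence $W$ to $W'$, and that the conjugation isomorphism $\iota_{\ast}:\Gl_{\omega(b)}\to\Gl_{\omega(b')}$, $g\mapsto\iota g\iota^{-1}$, intertwines the two actions on $P_d$. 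Consequently $\iota_{\ast}$ carries $\operatorname{Stab}(W)=G(b)$ onto $\operatorname{Stab}(W')=G(b')$, which is the required isomorphism.

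The main obstacle will be the construction of $\psi_{A}$: the key ingredients (quantification over $G(\M)$-stable subspaces, and a uniform Hermann-type bound for ideal containment) are already present in Section~4.1, but expressing the \emph{schematic} equality $\operatorname{Stab}(\langle\underline{u}'A\rangle)=G(x)$ as a single first-order formula requires a careful descent from the Laurent polynomial ring $k[Z,Z^{-1}]$ to $k[Z]$ before Hermann's bound can be invoked, together with the observation that (by Proposition~\ref{prop: equivalence for degree}(iii)) it suffices to range over subspaces of $P_d$, keeping the degree of the defining polynomials $Q_i$ uniformly bounded.
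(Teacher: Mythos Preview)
Your approach is correct and is essentially the same as the paper's: both arguments use Corollary~\ref{cor: type determines defining degree} to fix a common defining degree $d$, then build an $\mathcal{L}$-formula over $k$ asserting the existence of a basis relative to which the ideal of $G(x)$ in $k[Z,Z^{-1}]$ is pinned down by the polynomials $Q_i(A,Z)$ of Lemma~\ref{lemma: define Qs}, with the ideal-containment clause expressed via Lemma~\ref{lemma: Hermann} exactly as in Proposition~\ref{prop: define defining degree}. Your version is, if anything, slightly more explicit than the paper's: you include the clause that $\langle\underline{u}'A\rangle$ be $G(\M)$-stable (so that both inclusions $\I(G(x))\subseteq (Q_i(A,Z))$ and $(Q_i(A,Z))\subseteq\I(G(x))$ are forced), and you spell out the final step of producing the isomorphism $\iota\colon\omega(b)\to\omega(b')$ by matching the two witnessing bases, which the paper leaves implicit.
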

\begin{proof}
According to Corollary \ref{cor: type determines defining degree} the defining degree of $G(b)$ agrees with the defining degree of $G(b')$. Let us denote it with $d$. Let $s$ denote the dimension of $P_d(\omega(b),\omega(b)^\vee)$ and let $x$ be a variable from the $p$-objects sort. Let $1\leq r\leq s$ and $A\in k^{s\times r}$ with $\det((A_{i_\ell,j})_{1\leq\ell,j\leq r})\neq 0$ for $1\leq i_1<\ldots<i_r\leq s$. Moreover let $Q_1,\ldots,Q_{(s-r)r}\in\mathbb{Z}[T,1/\det((T_{i_\ell,j})_{1\leq\ell,j\leq r}),Z,Z^{-1}]$ be defined as in Lemma \ref{lemma: define Qs}.
Consider the formula $\varphi_A(x)$ with parameters from $k$ such that $\varphi_A(b)$ expresses the following statement: 

There exists a basis $\underline{v}$ of $\omega(b)$ such that for all $r'$ with $1\leq r'\leq s$, all $1\leq i'_1<\ldots<i'_{r'}\leq s$ and all $A'\in k^{s\times r'}$ with $\det((A'_{i'_\ell,j})_{1\leq\ell,j\leq r'})\neq 0$ such that the subspace of $P_d(\omega(b),\omega(b)^\vee)$ generated by $\underline{u}A'$ is $G(\M)$-stable, where $\underline{u}$ is the $\underline{v}$-canonical basis of $P_d(\omega(b),\omega(b)^\vee)$, the polynomials $Q'_1(A',Z),\ldots,Q'_{(s-r')r'}(A',Z)\in k[Z,Z^{-1}]_{\leq d}$ (with $Q'_1,\ldots,Q'_{(s-r')r'}$ defined as in Lemma \ref{lemma: define Qs}) lie in the ideal of $k[Z,Z^{-1}]$ generated by $Q_1(A,Z),\ldots,Q_{(s-r)r}(A,Z)\in k[Z,Z^{-1}]_{\leq d}$.

If $\varphi_A(x)$ lies in $\tp(b/k)$ the formula $\varphi_A(x)$ determines $G(b)$ because it shows that $\I(G(b))=\I(G(b)_{\leq d})$ is generated by $Q_1(A,Z),\ldots,Q_{(s-r)r}(A,Z)\in k[Z,Z^{-1}]_{\leq d}$. (Here $\Gl_{\omega(b)}\simeq \Gl_n$ via the basis $\underline{v}$ deemed to exist by $\varphi_A(x)$.)

On the other hand, $\varphi_A(x)$ lies in $\tp(b/k)$ for some choice of $r$, $A$ and $i_1,\ldots,i_r$.
\end{proof}

\bibliographystyle{alpha}
 \bibliography{bibdata}

 \medskip
 
 \noindent Author information:

 \medskip
 
 \noindent Anand Pillay: Department of Mathematics, University of Notre Dame,  Notre Dame, IN  46556-4618, USA. Email: {\tt apillay@nd.edu}

 \medskip
 
 \noindent Michael Wibmer: Institute of Analysis of Number Theory, Graz University of Technology, 8010 Graz, Austria. Email: {\tt wibmer@math.tugraz.at}

\end{document}